\theoremstyle{plain}
\DeclareMathOperator{\Lie}{Lie}
\DeclareMathOperator{\weight}{weight}
\DeclareMathOperator{\id}{id}
\DeclareMathOperator{\an}{an}
\DeclareMathOperator{\dR}{dR}
\DeclareMathOperator{\Wd}{Wd}
\DeclareMathOperator{\Ad}{Ad}
\DeclareMathOperator{\depth}{depth}
\DeclareMathOperator{\un}{un}
\DeclareMathOperator{\har}{har}
\DeclareMathOperator{\Spec}{Spec}
\DeclareMathOperator{\B}{B}
\DeclareMathOperator{\Li}{Li}
\DeclareMathOperator{\fix}{fix}
\DeclareMathOperator{\crys}{crys}
\DeclareMathOperator{\iter}{iter}
\DeclareMathOperator{\comp}{comp}
\DeclareMathOperator{\KZ}{KZ}
\theoremstyle{plain}
\newtheorem{Theoreme}{Theoreme}[subsection]
\newtheorem{Proposition}[Theoreme]{Proposition}
\newtheorem{Proposition-Definition}[Theoreme]{Proposition-Definition}
\newtheorem{Lemma-Notation}[Theoreme]{Lemma-Notation}
\newtheorem{Lemma-Definition}[Theoreme]{Lemma-Definition}
\newtheorem{Nota Bene}[Theoreme]{Nota Bene}
\newtheorem{Corollary}[Theoreme]{Corollary}
\theoremstyle{definition}
\newtheorem{Lemma}[Theoreme]{Lemma}
\newtheorem{Definition}[Theoreme]{Definition}
\newtheorem{Remark}[Theoreme]{Remark}
\newtheorem{Example}[Theoreme]{Example}
\newtheorem{Notation}[Theoreme]{Notation}
\newtheorem{Sub-lemma}[Theoreme]{Sub-lemma}
\newcommand{\simlra}{\buildrel \sim \over \longrightarrow}
\numberwithin{equation}{section}
\DeclareFontFamily{U}{russian}{}
\DeclareFontShape{U}{russian}{m}{n}
        { <5><6> wncyr5
        <7><8><9> wncyr7
        <10><10.95><12><14.4><17.28><20.74><24.88> wncyr10 }{}
\DeclareSymbolFont{Russian}{U}{russian}{m}{n}
\DeclareSymbolFontAlphabet{\mathcyr}{Russian}
\let\@math@cyr\mathcyr
\renewcommand{\mathcyr}[1]{\@math@cyr{\cyracc #1}}
\newcommand{\sh}{\mathcyr{sh}} % Le produit shuffle
\newcounter{subsubsubsection}[subsubsection]
\renewcommand\thesubsubsubsection{\thesubsubsection .\@alph\c@subsubsubsection}
\newcommand\subsubsubsection{\@startsection{subsubsubsection}{4}{\z@}%
                                     {-3.25ex\@plus -1ex \@minus -.2ex}%
                                     {1.5ex \@plus .2ex}%
                                     {\normalfont\normalsize\bfseries}}
\newcommand*\l@subsubsubsection{\@dottedtocline{3}{10.0em}{4.1em}}
\newcommand*{\subsubsubsectionmark}[1]{}
\author{David Jarossay}
\address{Department of Mathematics, Ben Gurion University of the Negev, Be'er-Sheva`, Israel}
\email{jarossay@post.bgu.ac.il}
\begin{document}

\title{Pro-unipotent harmonic actions and dynamical properties of $p$-adic cyclotomic multiple zeta values}

\renewcommand{\shorttitle}{Pro-unipotent harmonic actions and dynamical properties of $p$-adic cyclotomic MZV's}

\maketitle

\begin{abstract}
$p$-adic cyclotomic multiple zeta values depend on the choice of a number of iterations of the crystalline Frobenius of the pro-unipotent fundamental groupoid of $\mathbb{P}^{1} - \{0,\mu_{N},\infty\}$. In this paper we study how the iterated Frobenius depends on the number of iterations, in relation with the computation of $p$-adic cyclotomic multiple zeta values in terms of cyclotomic multiple harmonic sums. This provides new results on that computation and the definition of a new pro-unipotent harmonic action.

This is Part I-3 of \emph{$p$-adic cyclotomic multiple zeta values and $p$-adic pro-unipotent harmonic actions}.
\end{abstract}

\tableofcontents

\setcounter{section}{-1}

\section{Introduction}

\subsection{$p$-adic cyclotomic multiple zeta values, computation and iteration of the Frobenius}

Cyclotomic multiple zeta values are the following iterated integrals: for any positive integers, $d$ and $n_{i}$ $(1\leq i \leq d)$ and roots of unity $\xi_{i}$ $(1\leq i \leq d)$, such that $(n_{d},\xi_{d})\not=(1,1)$, 
\begin{equation} \label{eq:iterated integrals}
\zeta\big((n_{i})_{d};(\xi_{i})_{d}\big) = (-1)^{d} \int_{t_{n}=0}^{1} \frac{dt_{n}}{t_{n}-\epsilon_{n}} \int_{t_{n-1}=0}^{t_{n}} \cdots \int_{t_{1}=0}^{t_{2}} \frac{dt_{1}}{t_{1}-\epsilon_{1}} ,
\end{equation}

where $n=\sum\limits_{i=1}^{d}n_{i}$ and $(\epsilon_{n},\ldots,\epsilon_{1}) = (\overbrace{0,\ldots,0}^{n_{d}-1},1,\ldots,\overbrace{0,\ldots,0}^{n_{1}-1},1)$.
We choose $N$ such that the $\epsilon_{i}$'s are $N$-th roots of unity. Let $p$ be a prime number prime to $N$. $p$-adic cyclotomic multiple zeta values are defined as $p$-adic analogues of the above iterated integrals. They are elements of the extension $K$ of $\mathbb{Q}_{p}$ generated by a primitive $N$-th root of unity. There are two types of $p$-adic cyclotomic multiple zeta values; both of the notions rely on the Frobenius of the crystalline pro-unipotent fundamental groupoid of $\mathbb{P}^{1} - \{0,\mu_{N},\infty\}$ at the base-points $\vec{1}_{0}$ and $\vec{1}_{1}$, as follows (see \S1.1.3 for details):

(i) numbers $\zeta_{q}^{\KZ}\big((n_{i})_{d};(\xi_{i})_{d}\big) \in K $ defined by Coleman integration i.e. by using a Frobenius-invariant path \cite{Furusho 1, Furusho 2, Yamashita} (here, $q$ is the cardinality of the residue field of $K$).

(ii) for each $\alpha \in \mathbb{Z} - \{0\}$, numbers $\zeta_{p,\alpha}\big((n_{i})_{d};(\xi_{i})_{d}\big) \in K$ defined by the image by Frobenius iterated $\alpha$ times of the canonical path in the de Rham fundamental groupoid of $\mathbb{P}^{1} - \{0,\mu_{N},\infty\}$ (\cite{I-1}, \S1, and for particular values of $\alpha$, \cite{Yamashita, Unver 2, Deligne Goncharov, Unver 1}).

$p$-adic cyclotomic multiple zeta values can be considered as canonical coefficients of the Frobenius, and conversely one can recover the Frobenius by knowing only $p$-adic cyclotomic multiple zeta values.

Cyclotomic multiple harmonic sums are the following numbers:

\begin{equation} \label{eq:harmonic sum} h_{m} \big((n_{i})_{d};(\xi_{i})_{d+1}\big)  =
\sum_{0<m_{1}<\ldots<m_{d}<m}
\frac{\big( \frac{\xi_{2}}{\xi_{1}} \big)^{n_{1}} \ldots \big(\frac{\xi_{d+1}}{\xi_{d}}\big)^{n_{d}} \big(\frac{1}{\xi_{d+1}}\big)^{m}}{m_{1}^{n_{1}}\ldots m_{d}^{n_{d}}} .
\end{equation}

In the complex case we have: $\zeta\big((n_{i})_{d};(\xi_{i})_{d}\big) = \underset{m\rightarrow \infty}{\lim} h_{m} \big((n_{i})_{d};((\xi_{i})_{d},1)\big)$. Similarly it is possible to compute $p$-adic cyclotomic multiple zeta values in terms of cyclotomic multiple harmonic sums \cite{I-2}, thanks to a big combinatorial simplification proved by the main result of \cite{I-1}. (A different computation in the $N=1$ and $\alpha=-1$ resp. the $d\leq 2$ and $\alpha=-1$ case, which does not use the simplification of \cite{I-1} and whose results are more complicated and seem difficult to use, appears in \cite{Unver 3} resp. \cite{Unver 2}.)
In this paper we are going to study the following question: how does the iterated Frobenius depend on its number of iterations ? More specifically, we are going to connect this question and the framework of \cite{I-2}.

\subsection{Principles of the study}

Most of the time, we are not going to consider directly the Frobenius but, instead, the \emph{harmonic Frobenius}, defined in \cite{I-2}, definition 2.3.5, (we will reproduce it in \S1.4). It is a variant of the Frobenius which is much simpler and more natural from the point of view of multiple harmonic sums, and computing it suffices to compute the Frobenius.

Whereas the Frobenius is an isomorphism of bundles with connection, the harmonic Frobenius is a map on a space which contains the non-commutative generating series of weighted multiple harmonic sums $\har_{m}\big((n_{i})_{d};(\xi_{i})_{d+1}\big) = 
m^{n_{d}+\ldots+n_{1}} h_{m}\big((n_{i})_{d};(\xi_{i})_{d+1}\big)$.

We will use the fact that the harmonic Frobenius can be expressed in two ways:

(a) One ``in terms of integrals'', i.e. in which the coefficients of the harmonic Frobenius are expressed in terms of $p$-adic cyclotomic multiple zeta values, which are integrals and which we want to compute.

(b) Another one ``in terms of series'', in which the coefficients of the harmonic Frobenius are certain sums of series expressed in terms of the numbers $\har_{p^{\alpha}}\big((n_{i})_{d};(\xi_{i})_{d+1}\big)$, which are explicit.

In \cite{I-2}, by writing these two expressions and observing that they are are equal, we get an expression for $p$-adic cyclotomic multiple zeta values in terms of the numbers $\har_{p^{\alpha}}\big((n_{i})_{d};(\xi_{i})_{d+1}\big)$ and vice-versa. We are going to do something similar here, not for the harmonic Frobenius but for the study of the numbers $\har_{q^{\tilde{\alpha}}}\big((n_{i})_{d};(\xi_{i})_{d+1}\big)$, as functions of $\tilde{\alpha} \in \mathbb{N}^{\ast}$. Since the harmonic Frobenius can be expressed in terms of these numbers, this will directly provide a study of the iterated harmonic Frobenius in terms of its number of iterations.

After some preliminaries (\S1) we will do this study in terms of integrals (\S2), in terms of series (\S3), and we will use the fact that these two ways give the same result (\S4). In \S5 we will go back from the harmonic Frobenius to the Frobenius.

Moreover, we will keep track of the motivic structure underlying this framework. Indeed, $p$-adic cyclotomic multiple zeta values are reductions of $p$-adic periods \cite{Yamashita}, and there is a motivic Galois action on the pro-unipotent fundamental groupoid of $\mathbb{P}^{1} - \{0,\mu_{N},\infty\}$ (\cite{Deligne Goncharov}, \S5).

The Frobenius is expressed by means of the Ihara action (\ref{eq:Ihara action}), which is the image of the motivic Galois action by a certain morphism (see \S1.1.3). In \cite{I-2}, the passage from the Frobenius to the harmonic Frobenius lifts to a passage from the Ihara action to an operation which we called the pro-unipotent harmonic action of integrals, and we also find a pro-unipotent harmonic action of series (see \S1.4). The interest of pro-unipotent harmonic actions is that, being byproducts of the motivic Galois action, they retain certain properties of motivic Galois actions; and having a computation which keeps track of the motivic Galois action is key for us. The pro-unipotent harmonic actions will be the main objects in our subsequent papers \cite{II-1, II-2, II-3} in which we will show the compatibility between our computation and the motivic Galois theory of $p$-adic cyclotomic multiple zeta values.

Establishing the definition of pro-unipotent harmonic actions requires enriching the pro-unipotent fundamental groupoid, which is a groupoid in affine schemes over $\mathbb{P}^{1} - \{0,\mu_{N},\infty\}$ by turning it into a groupoid in ultrametric complete normed algebras (\cite{I-2}, \S1).

\subsection{A few definitions}

The study will require new definitions. First, we will define an ad hoc notion of contraction mapping (Definition \ref{def contracting}). We will show that the Frobenius at base-points $(\vec{1}_{1},\vec{1}_{0})$ is a contraction in our ad hoc sense. This will shed light on the dynamics of the Frobenius which has a unique fixed point. Thus, the ultrametric framework established in \cite{I-2} will be crucial here because in this framework we will introduce a notion of contraction mapping and we will see that the Frobenius is a contraction. Keeping track of the motivic structures will also require new definitions. 

We will study of $\har_{q^{\tilde{\alpha}}}$ as a function of $\tilde{\alpha}$ ``in terms of integrals'' (\S2) by viewing $\har_{q^{\tilde{\alpha}}}$ via their expression in terms of $p$-adic cyclotomic multiple zeta values proved in the main theorem of \cite{I-2}. This will be done in two different ways, corresponding to the two types of $p$-adic cyclotomic multiple zeta values evoked in \S0.1.

(i) A way involving the fixed point of the Frobenius and the numbers $\zeta_{q}^{\KZ}$. It will lead us to introduce a pro-unipotent harmonic action of integrals at $(1,0)$, $\circ_{\har}^{\smallint_{1,0}}$ (Definition \ref{def de Rham Ihara}), which is a variant of the notion introduced in \cite{I-2}. Another point of view on this object will be explained in the appendix.

(ii) A way involving the numbers $\zeta_{p,\alpha_{0}}$. It will lead us to introduce a map  $\iter_{\har}^{\smallint}(\textbf{a},\Lambda)$ of iteration of the harmonic Frobenius of integrals at (1,0), ($\Lambda$ and $\textbf{a}$ are formal variables which represent respectively $q^{\tilde{\alpha}}$ and $\frac{\tilde{\alpha}}{\tilde{\alpha}_{0}}$) (Definition \ref{def iteration of the harmonic Frobenius of series}).

In the study of $\har_{q^{\tilde{\alpha}}}$ as a function of $\tilde{\alpha}$ in terms of series (\S3), we do not have an analogue of the fixed point and the study will lead us to introduce a map of iteration of the harmonic Frobenius of series $\iter_{\har}^{\Sigma}(\textbf{a},\Lambda)$
(Definition \ref{def iteration of the harmonic Frobenius of series}).

Finally, in \S4, we will relate \S2 and \S3 by defining a map of comparison between series and integrals, which will be injective thanks to the results of \S2 and \S3.

As in \cite{I-2} these definitions enable us to express the computation not number by number, but as a new structure on the pro-unipotent fundamental group, which is more efficient. Indeed, this structure retains certain features of the motivic Galois theory of periods, which will be crucial in our subsequent papers \cite{II-1, II-2, II-3} in which we will relate the motivic Galois theory of $p$-adic cyclotomic multiple zeta values to our formulas.

\subsection{Results}

The main result consists of three equations to express $\har_{q^{\tilde{\alpha}}}$ as a function of $\tilde{\alpha}$, and the comparison between them.

The first two equations (proved in \S2), in which the harmonic Frobenius is thought of in terms of integrals, correspond to (i) and (ii) above.
The first one (\ref{eq:first of I-3}) involves the fixed-point of the Frobenius and will be called the \emph{fixed point equation of the harmonic Frobenius of integrals at (1,0)}; the second one (\ref{eq:second of I-3}) will be the \emph{iteration equation of the harmonic Frobenius of integrals at (1,0)}.

Finally, the third equation (proved in \S3) (\ref{eq:third of I-3}), in which the harmonic Frobenius is thought of in terms of series, will be the \emph{iteration equation of the harmonic Frobenius of integrals at (1,0)}.

In (\ref{eq:second of I-3}) and (\ref{eq:third of I-3}) the dependence of $\har_{q^{\tilde{\alpha}}}$ in $\tilde{\alpha}$ is via a power series in $K[[q^{\tilde{\alpha}}]][\tilde{\alpha}]$ and in (\ref{eq:first of I-3}) it is via a power series in $K[[q^{\tilde{\alpha}}]]$. We are going to see that these expansions are equal (\ref{eq:fourth of I-3}): in particular, the coefficients of $(q^{\tilde{\alpha}})^{0}\tilde{\alpha}^{m}$ for $m\geq 1$ will vanish.

In the statement below, for any $\tilde{\alpha} \in \mathbb{Z} \cup \{\pm \infty\} - \{0\}$, $\Phi_{q,\tilde{\alpha}}$ is the generating series of the numbers $\zeta_{q,\tilde{\alpha}}$ and $\Phi_{q,-\infty}=\Phi_{q}^{\KZ}$ is the generating series of the numbers $\zeta_{q}^{\KZ}$ (Notation \ref{notation generating series}), $\har_{q,\alpha}$ is a generating series of generalized prime weighted cyclotomic multiple harmonic sums (Definition \ref{generalized pMHS}), and $\tau$ is defined in equation (\ref{eq:tau}).
\newline 
\newline \textbf{Theorem.} \emph{Let $\tilde{\alpha}_{0},\tilde{\alpha} \in \mathbb{N}^{\ast}$ such that $\tilde{\alpha}_{0} | \tilde{\alpha}$.
\newline (i) (integrals)
The pro-unipotent harmonic action of integrals at $(1,0)$, denoted by $\circ_{\har}^{\smallint_{1,0}}$, is a continuous group action, and we have:
\begin{equation}
\label{eq:first of I-3}\har_{q,\tilde{\alpha}} =  \tau(q^{\tilde{\alpha}}) \big(
\Phi_{q,-\infty}^{-1}e_{1}\Phi_{q,-\infty} \big) \circ_{\har}^{\smallint_{1,0}} \har_{q,-\infty} .
\end{equation}
The map of iteration of the Frobenius of integrals at (1,0), denoted by $\iter_{\har}^{\smallint_{1,0}}$, satisfies, at words $w=\big((n_{i})_{d};(\xi_{i})_{d+1}\big)$ such that $\frac{\tilde{\alpha}}{\tilde{\alpha}_{0}} > d$:
\begin{equation}
\label{eq:second of I-3} \har_{q,\tilde{\alpha}}(w) = \iter_{\har}^{\smallint_{1,0}}\big(\frac{\tilde{\alpha}}{\tilde{\alpha}_{0}},q^{\tilde{\alpha}_{0}}\big)\big(\Phi^{-1}_{q,\tilde{\alpha}_{0}}e_{1}\Phi_{q,\tilde{\alpha}_{0}}\big)(w)
\end{equation}
(ii) (series) The map of iteration of the Frobenius of series, denoted by $\iter_{\har}^{\Sigma}$, satisfies:
\begin{equation} \label{eq:third of I-3}\har_{q,\tilde{\alpha}} =  \iter_{\har}^{\Sigma}\big(\frac{\tilde{\alpha}}{\tilde{\alpha}_{0}},q^{\tilde{\alpha}_{0}} \big)\big(\har_{q,\tilde{\alpha}_{0}}\big) .
\end{equation}
\newline (iii) (comparison between integrals and series) We have the following equalities of formal power series with formal variables $a$ and $\Lambda$:
\begin{equation} \label{eq:fourth of I-3} \tau(\Lambda)(\Phi_{q,-\infty}^{-1}e_{1}\Phi_{q,-\infty}) \circ_{\har}^{\smallint_{1,0}} \har_{q,-\infty} = \iter_{\har}^{\smallint}\big(a,\Lambda\big)\big(\Phi^{-1}_{q,\tilde{\alpha}_{0}}e_{1}\Phi_{q,\tilde{\alpha}_{0}}\big) = \iter_{\har}^{\Sigma}\big(a,\Lambda\big)\big(\har_{q,\tilde{\alpha}_{0}}\big) .
\end{equation}}

The first terms of the equations of the theorem are written in Example \ref{example of the theorem}.

In \S5, we deduce a similar result for the iteration of the Frobenius on the affinoid analytic subspace $\mathbb{P}^{1,\an} - \underset{\xi \in \mu_{N}(K)}{\cup} B(\xi,1)$ of $\mathbb{P}^{1,\an}/K$, knowing that the fixed-point equation of the Frobenius is already given by Coleman integration. One of these equations uses the regularization of $p$-adic iterated integrals studied in \cite{I-1}. In the appendix, we explain that the pro-unipotent harmonic action of integrals in (1,0) corresponds to a certain Poisson bracket.

The main result provides a natural way to compute the fixed point $\Phi_{q,-\infty}$ i.e. $p$-adic cyclotomic multiple zeta values in the sense of Coleman integration. Indeed, we see that the fixed point of the Frobenius $\Phi_{q,-\infty} \in \Pi_{1,0}(K)$ appears naturally as a way to express the coefficients of the iteration equations, and that these iteration equations can be understood in terms of explicit sums of series. This gives a way to compute Coleman integration without directly doing Coleman integration.

The main result also allows us to replace the map of comparison from integrals to series defined in \cite{I-2} by a map which has the advantage of being injective.

From a dynamical point of view, the main result gives an asymptotic expansion at infinite order of the convergence of the iterated (harmonic) Frobenius towards its fixed point. More precise information would follow from non-vanishing results or results on the valuation of $p$-adic cyclotomic multiple zeta values, or of certain infinite sums of them. This gives a corespondence between certain arithmetical properties of $p$-adic cyclotomic multiple zeta values and dynamical properties of the Frobenius. 
A correspondence between dynamical properties of the Frobenius and analytic properties of cyclotomic multiple harmonic sums is also deduced in \S5.

The pro-unipotent harmonic action $\circ_{\har}^{\smallint_{1,0}}$ which is defined in this paper will be central in our next papers \cite{II-1, II-2, II-3} on the explicit version of the algebraic theory of $p$-adic cyclotomic multiple zeta values. We will also see there that considering the iterates of the Frobenius, instead of only the Frobenius itself, is necessary to formulate an explicit version of the algebraic theory of $p$-adic cyclotomic multiple zeta values which is purely $p$-adic and not adelic. It will also find an application in \cite{III-1}, where we will see that we can construct a structure on $\pi_{1}^{\un,\dR}(\mathbb{P}^{1} - \{0,\mu_{p^{\alpha}N},\infty\})$ which generalizes the crystalline Frobenius on $\pi_{1}^{\un,\dR}(\mathbb{P}^{1} - \{0,\mu_{N},\infty\})$ iterated $\alpha$ times. Considering two parameters $\tilde{\alpha}_{0}$ and $\tilde{\alpha}$ with $\tilde{\alpha}_{0} | \tilde{\alpha}$ and not just $\tilde{\alpha}$ will also be useful in \cite{III-1} to shed light on the computation of $p$-adic cyclotomic multiple zeta values.

\emph{Acknowledgments.} This work has been done at Universit\'{e} Paris Diderot and Universit\'{e} de Strasbourg, with, respectively, the support of ERC grant 257638 and Labex IRMIA. It has been revised at  Universit\'{e} de Gen\`{e}ve and Ben Gurion University of the Negev with, respectively, the support of NCCR SwissMAP and Ishai Dan-Cohen's ISF grant. I thank two anonymous referees whose feedback helped me improve the redaction of this paper. I also thank Stephanie Blumenstock for her help with editing.

\section{Preliminaries}

In this section we establish the framework of this paper. 
We review some definitions and properties about the pro-unipotent fundamental groupoid of $\mathbb{P}^{1} - \{0,\mu_{N},\infty\}$, some results \cite{I-2} and we add to them a few new definitions and notations. Throughout this paper, $\mathbb{N}$ resp. $\mathbb{N}^{\ast}$ will denote the set of non-negative resp. positive integers.

\subsection{The pro-unipotent fundamental groupoid of $\mathbb{P}^{1} - \{0,\mu_{N},\infty\}$}

\subsubsection{The de Rham realization}

Let $X$ be $\mathbb{P}^{1} - \{0,\mu_{N},\infty\}$ over the $p$-adic field $K$, with the notations of \S0.1. The de Rham pro-unipotent fundamental groupoid $\pi_{1}^{\un,\dR}(X)$, in the sense of \cite{Deligne}, is a groupoid in pro-affine schemes over $X$. Its base points are the points of $X$ and the non-zero tangent vectors at $\{0,\mu_{N},\infty\} \subset \mathbb{P}^{1}$, called tangential base-points. The groupoid structure is defined by the morphisms $\pi_{1}^{\un,\dR}(X_{K},z,y) \times \pi_{1}^{\un,\dR}(X_{K},y,x) \rightarrow \pi_{1}^{\un,\dR}(X_{K},z,x)$ for any base-points $x,y,z$. By \cite{Deligne} \S12.9, each $\pi_{1}^{\un,\dR}(X,y,x)$ is canonically isomorphic to the spectrum of the shuffle Hopf algebra $\mathcal{O}^{\sh,e_{0\cup\mu_{N}}}$ over the alphabet $e_{0\cup\mu_{N}} = 
\{e_{x}\text{ }|\text{ }x \in \{0\} \cup \mu_{N}(K)\}$. This isomorphism is compatible with the groupoid structure.
\newline\indent Following \cite{Deligne Goncharov}, for any $N$-th root of unity $\xi \in \mu_{N}(K)$, we denote by $\Pi_{\xi,0}= \pi_{1}^{\un,\dR}(X,\vec{1}_{\xi},\vec{1}_{0})$. Let $f \mapsto f^{(\xi)}$ be the isomorphism $\Pi_{1,0} \rightarrow \Pi_{\xi,0}$ induced by the automorphism $x \mapsto \xi x$ of $X$ by functoriality of $\pi_{1}^{\un,\dR}$.
\newline\indent Let
$K\langle \langle e_{0\cup \mu_{N}}\rangle\rangle$ be the non-commutative $K$-algebra of formal power series over the non-commuting variables $e_{x}$, $x \in \{0 \cup \mu_{N}(K)\}$. We will write an element $f \in K\langle\langle e_{0\cup \mu_{N}}\rangle\rangle$ as $f\big((e_{x})_{x}\big) = f\big((e_{x})_{x \in \{0\}\cup \mu_{N}(K)}\big)$ or $f\big(e_{0},(e_{\xi})_{\xi}\big)= f\big(e_{0},(e_{\xi})_{\xi}\big)$. The coefficient in $f$ of a word $w$ on the alphabet $e_{0 \cup \mu_{N}}$ is denoted by $f[w]$. This notation extends by linearity to linear combinations of words, and if for any $n\geq 0$ $w_{n}$ is a linear combination of words of weight $n$, we denote by $f[\sum_{n=0}^{\infty} w_{n}] = \sum_{n=0}^{\infty}f[ w_{n}]$ if this series converges. 
We have a canonical inclusion
$\Spec(\mathcal{O}^{\sh,e_{0\cup\mu_{N}}})(K) \subset K\langle \langle e_{0\cup \mu_{N}}\rangle\rangle$; namely, $\Spec(\mathcal{O}^{\sh,e_{0\cup\mu_{N}}})(K)$ is the group of elements satisfying the shuffle equation: for all words $w,w'$, $f[w]f[w'] = f[w \sh w']$ where $\sh$ is the shuffle product of words on the alphabet $e_{0\cup \mu_{N}}$.

\subsubsection{Motivic Galois action and byproducts}

The motivic version of $\pi_{1}^{\un}(\mathbb{P}^{1} - \{0,\mu_{N},\infty\})$ is constructed in \cite{Deligne Goncharov}, \S5. 

Let $G^{\omega}$ be the motivic Galois group associated with the Tannakian category of mixed Tate motives over $k_{N}$ unramified at $p$ prime to $N$, and with the canonical fiber functor $\omega$. There is a semi-direct product decomposition $G_{\omega} = \mathbb{G}_{m} \ltimes U_{\omega}$ where $U_{\omega}$ is pro-unipotent.

One has an action of $G^{\omega}$ on $\Pi_{1,0}$.  Let $V^{\omega}$ be the group of automorphisms defined in \cite{Deligne Goncharov}, \S5.10. There is a morphism $U^{\omega} \rightarrow V^{\omega}$ sending the action of $U^{\omega}$ on $\Pi_{1,0}$ to an action of $V^{\omega}$ on $\Pi_{1,0}$. This action makes $\Pi_{1,0}$ a torsor under $V^{\omega}$. Thus one can consider the isomorphism of schemes $V^{\omega} \simeq \Pi_{1,0}$, $v \mapsto v ({}_1 1_{0})$ where ${}_1 1_{0}$ is the canonical de Rham path in the sense of \cite{Deligne} \S12. This isomorphism sends the action of $V^{\omega}$ on $\Pi_{1,0}$ to the Ihara action (\cite{Deligne Goncharov}, \S5.11), namely, the group law $\circ^{\smallint_{1,0}}$ on $\Pi_{1,0}$ defined by 
\begin{equation} \label{eq:Ihara action} g \circ^{\smallint_{1,0}} f = g \big(e_{0},(e_{\xi})_{\xi}\big) \times f\big(e_{0},({g^{(\xi)}}^{-1}e_{\xi}g^{(\xi)})_{\xi}\big) .
\end{equation}
\indent The motivic Galois action of $\mathbb{G}_{m}$ on $\Pi_{1,0}$ is 
\begin{equation} \label{eq:tau} \tau: \Big( \lambda,f\big((e_{x})_{x}\big) \Big) \mapsto f\big((\lambda e_{x})_{x}\big) ,
\end{equation}

i.e. $\lambda$ acts by multiplying the term of weight $n$ in $f$ by $\lambda^{n}$, for all $n \in \mathbb{N}$. Let the collection of maps $(\tau_{n})_{n\in\mathbb{N}}$ be defined by the equality $\sum\limits_{n\in\mathbb{N}} \tau_{n}(f) \lambda^{n} = \tau(\lambda)(f)$ for all $\lambda$. Namely, $\tau_{n}$ sends $f = \sum\limits_{w\text{ word}} f[w]w$ to $\sum\limits_{\substack{w\text{ word}\\ \weight(w)=n}} f[w]w$. These formulas also define an action on $K\langle\langle e_{0\cup \mu_{N}}\rangle\rangle$ for which we will use the same notations.

\subsubsection{The crystalline realization}

Let $\sigma$ be the Frobenius automorphism of $K$. 
For $\alpha\in \mathbb{N}^{\ast}$, let $X^{(p^{\alpha})}$ be the pull back of $X$ by $\sigma$ iterated $\alpha$ times.

Let $\phi$ be the Frobenius of the crystalline pro-unipotent fundamental groupoid of $\mathbb{P}^{1} - \{0,\mu_{N},\infty\}$ (\cite{Deligne}, \S11 and \S13.6). It is a $\sigma$-linear isomorphism of groupoids $\pi_{1}^{\un,\dR}(X^{(p)}) \simlra \pi_{1}^{\un,\dR}(X)$. For any $\alpha\in \mathbb{N}^{\ast}$, the Frobenius iterated $\alpha$ times is $\phi_{\alpha}= (\sigma^{\alpha-1})^{\ast}\phi \circ \cdots \circ \sigma^{\ast}(\phi) \circ \phi$. It is a $\sigma^{\alpha}$-linear isomorphism of groupoids $\pi_{1}^{\un,\dR}(X^{(p^{\alpha})}) \simlra \pi_{1}^{\un,\dR}(X)$. When $\alpha$ is divisible by $\frak{o}=\frac{\log(p)}{\log(q)}$, then $\sigma^{\alpha}=\id$, thus $\phi_{\alpha}$ is $K$-linear in the usual sense, its source and target are the same, and it is equal to $\phi_{\frak{o}}$ iterated $\frac{\alpha}{\frak{o}}$ times: we will write $\alpha = \frak{o}\tilde{\alpha}$, and $\phi_{\frak{o}\tilde{\alpha}} = \phi_{\frak{o}}^{\tilde{\alpha}}$. We denote by $\phi_{-\alpha}=\phi_{\alpha}^{-1}$.

Let us now consider the Frobenius at base-points $(1,0)=(\vec{1}_{1},\vec{1}_{0})$: $\phi_{\alpha}: \Pi^{(p^{\alpha})}_{1,0} \simlra \Pi_{1,0}$ where $\Pi_{1,0}^{(p^{\alpha})} = \pi_{1}^{\un,\dR}(X^{(p^{\alpha})},\vec{1}_{1},\vec{1}_{0})$. The non-commutative generating series of $p$-adic cyclotomic multiple zeta values are $\Phi_{p,\alpha}=\tau(p^{\alpha})\phi_{\alpha}(1)\in \Pi_{1,0}(K)$, and $\Phi_{p,-\alpha}=\phi_{-\alpha}(1) \in \Pi^{(p^{\alpha})}_{1,0}(K)$. Let us denote again by $\sigma$ the map $K \langle \langle e_{0 \cup \mu_{N}}\rangle\rangle \rightarrow K \langle \langle e_{0 \cup \mu_{N}}\rangle\rangle$ defined by applying the Frobenius $\sigma$ of $K$ to each coefficient of a formal power series. The formal properties of the Frobenius imply the following formulas:
\begin{equation} \label{eq: formula for Frob 1} \tau(p^{\alpha})\phi_{\alpha}: f \in \Pi_{1,0}^{(p^{\alpha})}(K) \mapsto \Phi_{p,\alpha} \circ^{\smallint_{1,0}} \sigma^{-\alpha}(f) \in \Pi_{1,0}(K) ,
\end{equation}
\begin{equation} \label{eq: formula for Frob 2} \phi_{-\alpha}: f \in \Pi_{1,0}(K) \mapsto \Phi_{p,-\alpha} \circ^{\smallint_{1,0}} \tau(p^{\alpha})\sigma^{\alpha}(f) \in \Pi_{1,0}^{(p^{\alpha})}(K) .
\end{equation}

\indent One also has the other notion $\Phi_{q}^{\KZ} \in \Pi_{1,0}(K)$ of a non-commutative generating series of $p$-adic cyclotomic multiple zeta values, defined by the following equality:

\begin{equation} \label{eq:Frobenius fixed point} \phi_{\frac{\log(q)}{\log(p)}}(\Phi_{q}^{\KZ}) = \Phi_{q}^{\KZ} ,
\end{equation}

where the existence and uniqueness of a fixed point of $\phi_{\frac{\log(q)}{\log(p)}}$ in $\Pi_{1,0}(K)$ follows from the theory of Coleman integration \cite{Coleman, Besser, Vologodsky}. The (ii) of the notation below will be justified by the results of \S2.

\begin{Notation} \label{notation generating series} (i) For any $\tilde{\alpha} \in \mathbb{Z} - \{0\}$, let $\Phi_{q,\tilde{\alpha}} = \Phi_{p,\frac{\log(q)}{\log(p)}\tilde{\alpha}}$.
\newline (ii) Let $\Phi_{q,-\infty} = \Phi_{q}^{\KZ}$, and let 
$\Phi_{q,\infty}$ be the inverse of $\Phi_{q,-\infty}$ for the Ihara product  $\circ^{\smallint_{1,0}}$.
\newline (iii) For $\alpha\in \mathbb{Z} \cup \{\pm \infty\} - \{0\}$, the $p$-adic cyclotomic multiple zeta values are the numbers $\zeta_{q,\tilde{\alpha}}\big((n_{i})_{d};(\xi_{i})_{d}\big) = (-1)^{d} \Phi_{q,\tilde{\alpha}}[e_{0}^{n_{d}-1}e_{\xi_{d}}\ldots e_{0}^{n_{1}-1}e_{\xi_{1}}]$ (and similarly for $\zeta_{p,\alpha}$ and $\Phi_{p,\alpha}$.)
\end{Notation}

\subsubsection{Around the adjoint action $\Ad(e_{1})$}

We use the convention that the adjoint action $\Ad_{(.)}(x)$ on $\Pi_{1,0}$ is $f \mapsto f^{-1}xf$ (instead of the usual $f \mapsto fxf^{-1}$, due to our convention of reading the groupoid multiplication from the right to the left). The adjoint Ihara action, defined in \cite{I-2}, is the group law on $\Ad_{\Pi_{1,0}}(e_{1})$ defined by 
\begin{equation} \label{eq:adjoint Ihara action} h \circ_{\Ad}^{\smallint_{1,0}} f = f \big(e_{0},(h^{(\xi)})_{\xi}\big) . 
\end{equation}

Let $\tilde{\Pi}_{1,0}$ be the sub-group scheme of $\Pi_{1,0}$ defined by the equations $f[e_{1}] = f[e_{0}] = 0$ (cf. \S1.1.1); $\Ad(e_{1})$ induces an isomorphism of groups $(\tilde{\Pi}_{1,0}(K),\circ^{\smallint_{1,0}}) \simlra (\Ad_{\tilde{\Pi}_{1,0}(K)}(e_{1}), \circ_{\Ad}^{\smallint_{1,0}})$. 

By (\ref{eq:Ihara action}) and (\ref{eq:tau}), one has a semi-direct product $\mathbb{G}_{m} \ltimes \tilde{\Pi}_{1,0}$, which acts on $\tilde{\Pi}_{1,0}$. Similarly, by (\ref{eq:adjoint Ihara action}) and (\ref{eq:tau}), one has a semi-direct product $\mathbb{G}_{m} \ltimes \Ad_{\tilde{\Pi}_{1,0}}(e_{1})$, which acts on $\Ad_{\tilde{\Pi}_{1,0}}(e_{1})$. The map $\id \times \Ad(e_{1})$ induces an isomorphism between these two group actions. For all 
$f,g \in \Pi_{1,0}(K)$, $\lambda \in K^{\ast}$, $n \in \mathbb{N}$, we have:

\begin{equation} \label{eq:tau to tau n}g \circ^{\smallint_{1,0}} (\tau(\lambda)(f)) = \sum_{n \in \mathbb{N}} \lambda^{n} g \circ^{\smallint_{1,0}} (\tau_{n}f) .
\end{equation}
We have:

\begin{equation} \tau_{n+1} \circ \Ad(e_{1}) = \Ad(e_{1}) \circ \tau_{n} ,
\end{equation}
\begin{equation} \Ad_{g}(e_{1}) \circ^{\smallint_{1,0}}_{\Ad} \frac{\tau(\lambda)}{\lambda} \Ad_{f}(e_{1}) = \sum_{n \in \mathbb{N}}
\lambda^{n} \Ad_{g}(e_{1})  \circ_{\Ad}^{\smallint_{1,0}} \tau_{n+1}\Ad_{f}(e_{1}) .
\end{equation}

\subsection{An ultrametric structure on the $K$-points of the de Rham pro-unipotent fundamental groupoid}

As reviewed in \S1.1, each $\Pi_{y,x} = \pi_{1}^{\un,\dR}(\mathbb{P}^{1} - \{0,\mu_{N},\infty\},y,x)$ is an affine scheme over $K$, and we have a canonical embedding $\Pi_{y,x}(K) \subset K \langle \langle e_{0\cup \mu_{N}}\rangle\rangle$. We consider now an enrichment of $K \langle \langle e_{0\cup \mu_{N}}\rangle\rangle$ into a ultrametric complete normed $K$-algebra: we review facts from \cite{I-2}, and we add a few complements. In particular, in \S1.2.3 we add a notion of contraction and we apply it to the Frobenius at base-points (1,0).

\subsubsection{From affine schemes to ultrametric normed algebras over $K$}

For $n,d \in \mathbb{N}^{\ast}$, let $\Wd_{\ast,d}(e_{0\cup\mu_{N}})$, resp. $\Wd_{n,d}(e_{0\cup\mu_{N}})$ the set of words on $e_{0\cup\mu_{N}}$ that are of depth $d$, resp. of weight $n$ and depth $d$. Let $\Lambda$ and $D$ be two formal variables.
\newline\indent Let $K \langle\langle e_{0\cup\mu_{N}} \rangle\rangle_{<\infty} \subset K \langle \langle e_{0\cup\mu_{N}} \rangle \rangle$, be the subset of the elements $f$ such that, for each $d \in \mathbb{N}^{\ast}$, we have
$\underset{w \in \Wd_{\ast,d}(e_{0\cup\mu_{N}})}{\sup} |f[w]|_{p} < \infty$. We say that the elements of $K \langle\langle e_{0\cup\mu_{N}} \rangle\rangle_{<\infty}$ are bounded.
Let $K \langle \langle e_{0\cup\mu_{N}} \rangle \rangle_{o(1)} \subset K \langle \langle e_{0\cup\mu_{N}} \rangle \rangle_{<\infty}$ be the subset of the elements $f$ such that, for each $d \in \mathbb{N}^{\ast}$, we have $\underset{w \in \Wd_{n,d}(e_{0\cup\mu_{N}})}{\sup} \big|f[w] \big|_{p} \underset{n\rightarrow \infty}{\longrightarrow} 0$. We say that the elements of $K \langle\langle e_{0\cup\mu_{N}} \rangle\rangle_{o(1)}$ are summable.
\newline\indent Let $\mathcal{N}_{\Lambda,D}: f \in K \langle\langle e_{0\cup\mu_{N}} \rangle\rangle \mapsto 
\sum\limits_{(n,d) \in \mathbb{N}^{2}} \bigg(  
\underset{w \in\Wd_{n,d}(e_{0\cup\mu_{N}})}{\max} \big|f[w]\big|_{p} \bigg) \Lambda^{n}D^{d} \in  \mathbb{R}_{+}[[\Lambda,D]]$, and $\mathcal{N}_{D}: f \in K \langle\langle e_{0\cup\mu_{N}} \rangle\rangle_{<\infty} \mapsto \sum\limits_{d \in \mathbb{N}}
\bigg( \underset{w \in \Wd_{\ast,d}(e_{0\cup\mu_{N}})}{\sup} |f[w]|_{p} \bigg) D^{d} \in \mathbb{R}_{+}[[D]]$. One can check that these definitions give structures of complete normed ultrametric $K$-algebra on $K \langle\langle e_{0\cup\mu_{N}} \rangle\rangle$, $K \langle \langle e_{0\cup\mu_{N}} \rangle \rangle_{<\infty}$ and $K \langle \langle e_{0\cup\mu_{N}} \rangle \rangle_{o(1)}$ (\cite{I-2}, proposition 1.3.3).

\subsubsection{Compatibility between the ultrametric structure and the usual algebraic operations}

By \cite{I-2}, proposition 1.3.6, the Ihara product (\ref{eq:Ihara action}), the adjoint Ihara action (\ref{eq:adjoint Ihara action}), and the action $\tau$ (\ref{eq:tau}) are continuous relative to the topologies defined by $\mathcal{N}_{\Lambda,D}$ and $\mathcal{N}_{D}$ on $\Pi_{1,0}(K)$ and the $p$-adic topology on $K$. And for all $f,g \in \Pi_{1,0}(K)$, $\lambda \in K^{\times}$, we have (by \cite{I-2}, proof of proposition 1.3.6)
\begin{equation} \label{eq:Norm Ad}
\mathcal{N}_{\Lambda,D}(\Ad_{f}(e_{1})) \leqslant \Lambda D \mathcal{N}_{\Lambda,D}(f) ,
\end{equation}
\begin{equation} \label{eq:submultiplicativity}\mathcal{N}_{\Lambda,D}( g \circ^{\smallint_{1,0}} f ) \leqslant \mathcal{N}_{\Lambda,D}(g) \times \mathcal{N}_{\Lambda,D}(f) ,
\end{equation}
\begin{equation} \label{eq:norm and tau} \mathcal{N}_{\Lambda,D}(\tau(\lambda)(f))(\Lambda,D) = \mathcal{N}_{\Lambda,D}(f)(\lambda\Lambda,D) .
\end{equation}

These equations imply similar equations with $\mathcal{N}_{D}$ instead of $\mathcal{N}_{\Lambda,D}$ by passing to supremums.

Let us add another compatibility, which concerns the maps $\tau_{n}$ defined in \S1.1.2:

\begin{Lemma} \label{prop prs et N}(i) For all $n \in \mathbb{N}^{\ast}$, for all $f \in K \langle \langle e_{0\cup\mu_{N}}\rangle\rangle$, we have $\mathcal{N}_{\Lambda,D}(\tau_{n}(f))\leqslant\mathcal{N}_{\Lambda,D}(f)$, and in particular $\tau_{n}$ is a continuous linear map for the $\mathcal{N}_{\Lambda,D}$-topology.
\newline (ii) $K \langle \langle e_{0\cup\mu_{N}} \rangle\rangle_{<\infty}$ and $K \langle \langle e_{0\cup\mu_{N}} \rangle\rangle_{o(1)}$ are stable by $\tau_{n}$.
\newline (iii) \label{corollary} For all $f,g \in \Pi_{1,0}(K)$ and $n \in \mathbb{N}$ we have:
\begin{equation} \label{eq:submultiplicativity}
\mathcal{N}_{\Lambda,D}( \Ad_{(g \text{ }\circ^{\smallint_{1,0}}\text{ }\tau_{n}(f))}(e_{1})) \leqslant\Lambda D \mathcal{N}_{\Lambda,D}(g)  \mathcal{N}_{\Lambda,D}(f) .
\end{equation}
\end{Lemma}

\begin{proof} (i) and (ii) are clear from the definitions. (iii) follows from (\ref{eq:Norm Ad}), (\ref{eq:submultiplicativity}) and (i).
\end{proof}

\subsubsection{The weighted Ihara action and contraction mappings}

We define a notion of contraction mappings within the topological framework reviewed above. The exponent ${-1_{\smallint_{1,0}}}$ means the inverse for the Ihara product (\ref{eq:Ihara action}).

\begin{Definition} \label{def contracting}
Let $\kappa \in K^{\ast}$ with $|\kappa|_{p}<1$. We say that a map $\psi: \Pi_{1,0}(K)\rightarrow \Pi_{1,0}(K)$ is a $\kappa$-contraction (with respect to $\mathcal{N}_{\Lambda,D}$ and $\circ^{\smallint_{1,0}}$) if, for all $f_{1},f_{2} \in \Pi_{1,0}(K)$, we have:
\begin{equation} \label{eq: of contraction} \mathcal{N}_{\Lambda,D} \big( \psi(f_{2})^{-1_{\smallint_{1,0}}} \circ^{\smallint_{1,0}} \psi(f_{1}) \big) (\Lambda,D) \leqslant\mathcal{N}_{\Lambda,D} \big( f_{2}^{-1_{\smallint_{1,0}}} \circ^{\smallint_{1,0}} f_{1} \big)(\kappa \Lambda,D) .
\end{equation}
\end{Definition}

Indeed, let $\psi: \Pi_{1,0}(K) \rightarrow \Pi_{1,0}(K)$ be a contraction in the sense of definition \ref{def contracting}. Then, by the submultiplicativity of the Ihara product with respect to $\mathcal{N}_{\Lambda,D}$ (equation (\ref{eq:submultiplicativity})) and the fact that $K\langle\langle e_{0\cup\mu_{N}}\rangle\rangle$ is complete with respect to the distance defined by $\mathcal{N}_{\Lambda,D}$ (\S1.2), a standard proof tells us that $\psi$ is continuous (with respect to $\mathcal{N}_{\Lambda,D}$) and has a unique fixed point, equal to 
$\fix_{\psi} = \underset{a \rightarrow \infty}{\lim} \psi^{a} (f)$ for all $f \in \Pi_{1,0}(K)$. Thus the contractions in the sense of definition \ref{def contracting} satisfy the usual properties of contractions regarding fixed points.

\begin{Definition} \label{definition weighted}Let $(\lambda,g) \in \mathbb{G}_{m}(K) \times \Pi_{1,0}(K)$. We call weighted Ihara action by $g$ with parameter $\lambda$ and we denote by $(\lambda,g)\text{ } \circ^{\smallint_{1,0}}$ the map
$$ \Pi_{1,0}(K) \rightarrow \Pi_{1,0}(K), $$ 
$$ f \mapsto (\lambda,g) \circ^{\smallint_{1,0}} f = g \circ^{\smallint_{1,0}} \tau(\lambda)(f) . $$
\end{Definition}

We now relate the two previous definitions in the case where $\lambda$ is small:

\begin{Proposition} \label{prop weighted Ihara is contraction}(i) Let $(\lambda,g) \in \mathbb{G}_{m}(K) \times \Pi_{1,0}(K)$ such that $|\lambda|_{p}<1$.  \label{prop contractance} The map $(\lambda,g) \circ^{\smallint_{1,0}}$ is a $\lambda$-contraction. More precisely, the inequality (\ref{eq: of contraction}) is an equality if $\kappa=\lambda$. 
\newline\indent (ii) For all $(\lambda,g) \in \mathbb{G}_{m}(K) \times \Pi_{1,0}(K)$, the Ihara action of $g$ weighted by $\lambda$ is an automorphism of the scheme
$\Pi_{1,0} \times_{\Spec \mathbb{Q}} \Spec K$, whose inverse is
$$ f \mapsto \tau(\lambda^{-1}) (g^{-1_{\smallint_{1,0}}} \circ^{\smallint_{1,0}} f) . $$
\end{Proposition}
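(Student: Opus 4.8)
The plan is to reduce both parts to the group-law and normed-algebra structures on $\Pi_{1,0}(K)$ recalled in \S1.1 and \S1.2. Write $\psi=(\lambda,g)\circ^{\smallint_{1,0}}$, so that $\psi=L_{g}\circ\tau(\lambda)$, where $\tau(\lambda)$ is the map (\ref{eq:tau}) and $L_{g}\colon f\mapsto g\circ^{\smallint_{1,0}}f$ denotes left translation by $g$ for the Ihara product. I will use three inputs: $\circ^{\smallint_{1,0}}$ is a group law on $\Pi_{1,0}(K)$, so left translations are bijections with $L_{g}^{-1}=L_{g^{-1_{\smallint_{1,0}}}}$; the map $\tau(\lambda)$ is an automorphism of $\big(\Pi_{1,0}(K),\circ^{\smallint_{1,0}}\big)$ with inverse $\tau(\lambda^{-1})$, which extends the semi-direct product structure of \S1.1 and is checked by a direct comparison of (\ref{eq:Ihara action}) with (\ref{eq:tau}); and the scaling identity $\mathcal{N}_{\Lambda,D}(\tau(\lambda)(h))(\Lambda,D)=\mathcal{N}_{\Lambda,D}(h)(\lambda\Lambda,D)$ of \S1.2.

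For (i), fix $f_{1},f_{2}\in\Pi_{1,0}(K)$. Inside the group $\big(\Pi_{1,0}(K),\circ^{\smallint_{1,0}}\big)$ the two factors $g$ cancel, and since $\tau(\lambda)$ is a homomorphism for $\circ^{\smallint_{1,0}}$,
\[\psi(f_{2})^{-1_{\smallint_{1,0}}}\circ^{\smallint_{1,0}}\psi(f_{1})=\tau(\lambda)(f_{2})^{-1_{\smallint_{1,0}}}\circ^{\smallint_{1,0}}\tau(\lambda)(f_{1})=\tau(\lambda)\big(f_{2}^{-1_{\smallint_{1,0}}}\circ^{\smallint_{1,0}}f_{1}\big).\]
Applying $\mathcal{N}_{\Lambda,D}$ and the scaling identity turns (\ref{eq: of contraction}) into the exact equality $\mathcal{N}_{\Lambda,D}\big(\psi(f_{2})^{-1_{\smallint_{1,0}}}\circ^{\smallint_{1,0}}\psi(f_{1})\big)(\Lambda,D)=\mathcal{N}_{\Lambda,D}\big(f_{2}^{-1_{\smallint_{1,0}}}\circ^{\smallint_{1,0}}f_{1}\big)(\lambda\Lambda,D)$; since $|\lambda|_{p}<1$, this both exhibits $\psi$ as a $\lambda$-contraction in the sense of Definition \ref{def contracting} and, being an equality, gives the asserted sharpening.

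For (ii), $\tau(\lambda)$ is an automorphism of the scheme $\Pi_{1,0}\times_{\Spec\mathbb{Q}}\Spec K$ with inverse $\tau(\lambda^{-1})$ because $\tau$ is a $\mathbb{G}_{m}$-action, and $L_{g}$ is a scheme automorphism with inverse $L_{g^{-1_{\smallint_{1,0}}}}$ because $\circ^{\smallint_{1,0}}$ is a group law; hence $\psi=L_{g}\circ\tau(\lambda)$ is a scheme automorphism with inverse $\tau(\lambda)^{-1}\circ L_{g}^{-1}=\tau(\lambda^{-1})\circ L_{g^{-1_{\smallint_{1,0}}}}$, that is $f\mapsto\tau(\lambda^{-1})\big(g^{-1_{\smallint_{1,0}}}\circ^{\smallint_{1,0}}f\big)$; that the composite is the identity can be seen on $\bar{K}$-points, everything being in characteristic zero, or directly from (\ref{eq:Ihara action}) and (\ref{eq:tau}). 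The one point deserving genuine verification — and which I expect to be the main, if still routine, obstacle — is the claim that $\tau(\lambda)$ is a homomorphism for $\circ^{\smallint_{1,0}}$: one must check that the substitution $e_{x}\mapsto\lambda e_{x}$ commutes with the inner substitution $e_{\xi}\mapsto {g^{(\xi)}}^{-1}e_{\xi}g^{(\xi)}$ in (\ref{eq:Ihara action}), using $(\tau(\lambda)g)^{(\xi)}=\tau(\lambda)(g^{(\xi)})$ and that $\tau(\lambda)$ respects concatenation weights. Once this identity is in hand, both parts are formal.
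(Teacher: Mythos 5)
The paper's own proof is just ``Clear,'' so there is nothing to compare line by line; your argument is a correct and natural way to fill it in. The fact you flag as the one needing real verification --- that $\tau(\lambda)$ is an automorphism of $\big(\Pi_{1,0}(K),\circ^{\smallint_{1,0}}\big)$ --- is precisely the content of the semi-direct product structure $\mathbb{G}_{m}\ltimes\tilde{\Pi}_{1,0}$ asserted at the end of \S1.1, so your decomposition $\psi=L_{g}\circ\tau(\lambda)$ and the cancellation of the $g$-factors in part (i) are exactly what the author is treating as evident.
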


\begin{proof} (i) We have $\big((\lambda,g) \circ^{\smallint_{1,0}}(f_{2})\big) ^{-1_{\smallint_{1,0}}} \circ^{\smallint_{1,0}} \big((\lambda,g) \circ^{\smallint_{1,0}}(f_{1})\big) = 
\big( 	g \circ^{\smallint_{1,0}} \tau(\lambda)(f_{2})\big) ^{-1_{\smallint_{1,0}}}	g \circ^{\smallint_{1,0}} \tau(\lambda)(f_{1})
= \tau(\lambda)(f_{2})^{-1_{\smallint_{1,0}}} \circ^{\smallint_{1,0}}	g^{-1_{\smallint_{1,0}}} \circ^{\smallint_{1,0}} g \circ^{\smallint_{1,0}} \tau(\lambda)(f_{1}) = \tau(\lambda)(f_{2})^{-1_{\smallint_{1,0}}} \circ^{\smallint_{1,0}} \tau(\lambda)(f_{1}) = 
\tau(\lambda)\big( (f_{2})^{-1_{\smallint_{1,0}}} \circ^{\smallint_{1,0}} f_{1} \big)$.
\newline Thus 
$\mathcal{N}_{\Lambda,D}\big((\lambda,g) \circ^{\smallint_{1,0}}(f_{2})\big) ^{-1_{\smallint_{1,0}}} \circ^{\smallint_{1,0}} \big((\lambda,g) \circ^{\smallint_{1,0}}(f_{1})\big)(\Lambda,D) = \mathcal{N}_{\Lambda,D}\tau(\lambda)\big( (f_{2})^{-1_{\smallint_{1,0}}} \circ^{\smallint_{1,0}} f_{1} \big)(\Lambda,D)
= \mathcal{N}_{\Lambda,D}\big( (f_{2})^{-1_{\smallint_{1,0}}} \circ^{\smallint_{1,0}} f_{1} \big)(\lambda\Lambda,D)$ by equation (\ref{eq:norm and tau}).

(ii) Follows from the definitions.
\end{proof}

Knowing that we have a family of contractions, we consider their fixed points and their iterations.

\begin{Definition} Let $(\lambda,g) \mapsto \fix_{\lambda,g}$ be the fixed point map which sends $(\lambda,g) \in \{z \in K\text{ }|\text{ }0<|z|_{p}<1\} \times \Pi_{1,0}(K)$ to the unique fixed point of the weighted Ihara action $(\lambda,g) \circ^{\smallint_{1,0}}$.
\end{Definition}

It follows from the definitions that the map $\fix_{\lambda}: g \mapsto \fix_{\lambda,g}$ is an automorphism of the scheme $\Pi_{1,0} \times_{\Spec \mathbb{Q}} \Spec K$, whose inverse is
$\fix_{\lambda}^{-1}: f \mapsto f \text{ }\circ^{\smallint_{1,0}} \text{ } \tau(\lambda)(f)^{-1_{\smallint_{1,0}}}$.
The fixed point map is characterized by the equation:
\begin{equation} \label{eq:char fixed point} g\big( e_{0},(e_{\xi})_{\xi}\big) \fix_{\lambda,g}\big( \lambda e_{0},\lambda (g_{\xi}^{-1}e_{\xi}g_{\xi})_{\xi} \big) = \fix_{\lambda,g}\big( e_{0},(e_{\xi})_{\xi} \big) .
\end{equation}

Note that the inversion for the Ihara product on $\Pi_{1,0}(K)$ is characterized by
$$ g(e_{0},(e_{\xi})_{\xi}) \text{ . } g^{-1_{\smallint_{1,0}}} (e_{0}, (g_{\xi}^{-1}e_{\xi}g_{\xi})_{\xi}) = 1 . $$

\begin{Definition} \label{iteration of the weighted Ihara action}Let $a \in \mathbb{N}^{\ast}$. Let the map of iteration $a$ times of the Ihara action weighted by $\lambda$,
$\iter^{\smallint_{1,0}}_{a,\lambda}: 
\Pi_{1,0}(K) \rightarrow \Pi_{1,0}(K) $ 
be defined by $g \mapsto g^{a(\circ^{\smallint_{1,0}},\lambda)}$ where 
\begin{equation} \label{eq:inverse Ihara} g^{a(\circ^{\smallint_{1,0}},\lambda)} =  \underbrace{(\lambda,g) \circ^{\smallint_{1,0}} \ldots \circ^{\smallint_{1,0}} (\lambda,g)}_{a} \circ^{\smallint_{1,0}} 1 = g \circ^{\smallint_{1,0}} \tau(\lambda)(g) \circ^{\smallint_{1,0}} \ldots \circ^{\smallint_{1,0}} \tau(\lambda^{a-1})(g) 
\end{equation}
\end{Definition}

Thus $g^{a(\circ^{\smallint_{1,0}},\lambda)}$ is the unique element of $\Pi_{1,0}(K)$ such that we have, for all $f \in \Pi_{1,0}(K)$,
\newline $\underbrace{(\lambda,g) \circ^{\smallint_{1,0}} \ldots \circ^{\smallint_{1,0}} (\lambda,g)}_{a} \circ^{\smallint_{1,0}} f = (\lambda^{a},g^{a(\circ^{\smallint_{1,0}},\lambda)}) \circ^{\smallint_{1,0}} f $. The iteration map is expressed in terms of the usual de Rham multiplication on $\Pi_{1,0}(K)$ by 
\begin{equation} \label{eq: char iteration}
\iter^{\smallint_{1,0}}_{a,\lambda}(g) = 
g\big(e_{0},(e_{\xi})_{\xi\in \mu_{N}(K)}\big) \text{ } g\big(\lambda e_{0},(\lambda \Ad_{g_{\xi}}(e_{\xi}))_{\xi}\big) \text{ }\cdots\text{ } 
g\big( \lambda^{a-1}e_{0},(\lambda^{a-1}\Ad_{g_{\xi}^{a-1}}(e_{\xi}))_{\xi} \big) .
\end{equation}

\subsubsection{Application to the Frobenius}

We apply the previous paragraphs to study the iteration of Frobenius at the base-points (1,0) which we view as a map $\phi: \Pi_{1,0}^{(p)}(K) \rightarrow \Pi_{1,0}(K)$.

\begin{Lemma} \label{lemma Frobenius is p-contraction} The map  $\phi_{-\frac{\log(q)}{\log(p)}}: \Pi_{1,0}(K) \rightarrow \Pi_{1,0}(K)$ is a $\frac{1}{q}$-contraction.
\newline If $\Pi^{(p)}_{1,0}(K)$ is identified to $\Pi_{1,0}(K)$ by the isomorphism defined by $e_{0} \mapsto e_{0}$ and $e_{\xi^{(p^{\alpha})}} \mapsto e_{\xi}$ for all $\xi \in \mu_{N}(K)$, the map $\phi_{-1}: \Pi_{1,0}(K) \rightarrow \Pi^{(p)}_{1,0}(K)$ is a $\frac{1}{p}$-contraction.
\end{Lemma}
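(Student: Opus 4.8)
The plan is to unwind the explicit formula for $\phi_{-\frac{\log(q)}{\log(p)}}$ given by equation (\ref{eq: formula for Frob 2}) and recognize it as a weighted Ihara action, then invoke Proposition \ref{prop weighted Ihara is contraction}(i). Concretely, with $\alpha = \frac{\log(q)}{\log(p)} = \mathfrak{o}$ we have $\sigma^{\alpha} = \id$, so (\ref{eq: formula for Frob 2}) reads $\phi_{-\mathfrak{o}} : f \mapsto \Phi_{p,-\mathfrak{o}} \circ^{\smallint_{1,0}} \tau(q)(f) = (q, \Phi_{p,-\mathfrak{o}}) \circ^{\smallint_{1,0}} f$ in the notation of Definition \ref{definition weighted}, using that $p^{\mathfrak{o}} = q$. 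Since $|q|_{p} = \frac{1}{q} < 1$, Proposition \ref{prop weighted Ihara is contraction}(i) applies verbatim with $\lambda = q$: the map is a $q^{-1}$-contraction, and in fact the contraction inequality (\ref{eq: of contraction}) is an equality with $\kappa = \frac{1}{q}$.

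For the second statement, the subtlety is that $\sigma$ is no longer trivial, so $\phi_{-1}$ is only $\sigma$-semilinear and does not literally land in $\Pi_{1,0}(K)$. I would first transport everything to $\Pi_{1,0}(K)$ via the stated identification $\iota : \Pi^{(p)}_{1,0}(K) \xrightarrow{\sim} \Pi_{1,0}(K)$, $e_0 \mapsto e_0$, $e_{\xi^{(p)}} \mapsto e_{\xi}$; then $\iota \circ \phi_{-1} : \Pi_{1,0}(K) \to \Pi_{1,0}(K)$ is, by (\ref{eq: formula for Frob 2}) with $\alpha = 1$, the map $f \mapsto \iota(\Phi_{p,-1}) \circ^{\smallint_{1,0}} \tau(p)(\sigma(f))$ (the Ihara product and $\tau$ being defined over $\mathbb{Q}_p$, hence commuting with $\iota$). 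The key observation is that $\sigma$ is an isometry for $\mathcal{N}_{\Lambda,D}$ — it permutes coefficients and preserves $|\cdot|_p$ — so it does not affect the contraction estimate. Thus I would establish a "semilinear weighted Ihara action" version of Proposition \ref{prop weighted Ihara is contraction}(i): for $|\lambda|_p < 1$ and any $g$, the map $f \mapsto g \circ^{\smallint_{1,0}} \tau(\lambda)(\sigma(f))$ is a $\lambda$-contraction. Its proof is the same computation as in the unweighted case, inserting the identity $\mathcal{N}_{\Lambda,D}(\sigma(h))(\Lambda,D) = \mathcal{N}_{\Lambda,D}(h)(\Lambda,D)$, together with $\mathcal{N}_{\Lambda,D}(\tau(\lambda)(h))(\Lambda,D) = \mathcal{N}_{\Lambda,D}(h)(\lambda\Lambda,D)$ and the submultiplicativity (\ref{eq:submultiplicativity}) of the Ihara product. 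Applying this with $\lambda = p$, so $|\lambda|_p = \frac{1}{p}$, gives that $\phi_{-1}$ (after the identification) is a $\frac{1}{p}$-contraction.

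The main obstacle — really the only point requiring care — is the bookkeeping in the semilinear case: making sure that $\sigma$, $\tau(\lambda)$, $\iota$, and $\circ^{\smallint_{1,0}}$ interact correctly, in particular that $\phi_{-1}^a$ unwinds to a weighted product with weights $p^0, p^1, \ldots, p^{a-1}$ and twisted coefficients $\Phi_{p,-1}, \sigma(\Phi_{p,-1}), \ldots, \sigma^{a-1}(\Phi_{p,-1})$ (which is consistent with $\phi_{-\mathfrak{o}} = \phi_{-1}^{\,\mathfrak{o}}$ and the first statement, since $\prod \sigma^i(\cdots)$ over a full period of $\sigma$ recovers $\Phi_{p,-\mathfrak{o}}$ and $\prod_{i=0}^{\mathfrak{o}-1} p^i$-type weights assemble into powers of $q$). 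Beyond that the argument is routine given the material already assembled in \S1.1--\S1.4, so the proof should amount to little more than the two displayed reductions above plus a citation of Proposition \ref{prop weighted Ihara is contraction}(i) and its semilinear analogue.
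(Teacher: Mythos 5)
Your proposal is correct and follows the same route as the paper, whose proof is a one-liner citing the explicit formula for the inverse Frobenius as a $\sigma$-twisted weighted Ihara action (the paper points to (\ref{eq: formula for Frob 1}); (\ref{eq: formula for Frob 2}), which you use, is the more directly applicable one), Proposition~\ref{prop weighted Ihara is contraction}, and the isometry of $\sigma$. You have simply unwound that skeleton into its constituent steps, including the semilinear bookkeeping that the paper leaves implicit.
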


\begin{proof} This follows from the formula (\ref{eq: formula for Frob 1}), from proposition \ref{prop weighted Ihara is contraction} and from the fact that $\sigma$ is an isometry of $K$ for the $p$-adic metric.
\end{proof}

In the rest of this paper, for simplicity, we will deal mostly with the iterations of $\phi_{-\frac{\log(q)}{\log(p)}}$. This is sufficient, knowing that, for any $\alpha \in \mathbb{N}^{\ast}$, writing the Euclidean division $\alpha = r + u\frac{\log(q)}{\log(p)}$, we have $\phi_{\alpha} = \phi_{\frac{\log(q)}{\log(p)}}^{u}\text{ }\circ \text{ }\phi_{r}$.

\subsection{Prime weighted cyclotomic multiple harmonic sums}

The numbers $\har_{q^{\alpha}}\big( (n_{i})_{d};(\xi_{i})_{d+1} \big)$ will play a central role; here, we formally explain how to study them.

\subsubsection{The three frameworks of computation}

Multiple polylogarithms are the solutions to the Knizhnik-Zamolodchikov differential equation, which is the universal connection associated with $\pi_{1}^{\un,\dR}(\mathbb{P}^{1} - \{0,\mu_{N},\infty\})$ (in the sense of \cite{Deligne}, \S12), and whose crystalline Frobenius structure is $\phi$.

Their power series expansion at 0 is the following, which relates them to cyclotomic multiple harmonic sums:

\begin{equation} \label{eq:power series}\Li[(n_{i})_{d};(\xi_{i})_{d}](z) = \sum_{0<m_{1}<\ldots<m_{d}} \frac{(\frac{\xi_{2}}{\xi_{1}})^{m_{1}} \cdots (\frac{z}{\xi_{d}})^{m_{d}}}{m_{1}^{n_{1}} \cdots m_{d}^{n_{d}}} .
\end{equation}

We have proved as a \cite{I-2} (see \cite{I-2}, equation (0.3.7)) that, for all $\alpha \in \mathbb{N}^{\ast}$, prime weighted multiple harmonic sums are expressed in the following way (where $f \mapsto f^{(\xi)}$ is the natural map $\Pi_{1,0}(K) \rightarrow \Pi_{\xi,0}(K)$):

\begin{equation} \label{eq:formula of I-2}
\har_{p^{\alpha}}\big((n_{i})_{d};(\xi_{i})_{d+1}\big) = (-1)^{d} \sum_{l =0}^{\infty}\sum_{\xi} \xi^{-p^{\alpha}} \Ad_{{\Phi^{(\xi)}_{p,\alpha}}}(e_{\xi})[e_{0}^{l}e_{\xi_{d+1}}e_{0}^{n_{d}-1}e_{\xi_{d}}\ldots e_{0}^{n_{1}-1}e_{\xi_{1}}] .
\end{equation}

We have a total of three ways to deal with prime weighted multiple harmonic sums, which makes three frameworks for computations:

(i) via their expression in terms of $p$-adic cyclotomic multiple zeta values (\ref{eq:formula of I-2})

(ii) via their expression as coefficients of power series expansions of multiple polylogarithms (\ref{eq:power series}).

(iii) via their definition as elementary explicit iterated sums (\ref{eq:harmonic sum}).

We will symbolize these three frameworks by, respectively, the notations $\int_{1,0}$, $\int$ and $\Sigma$.

In \cite{I-2} we have expressed the ``harmonic Frobenius'' in the frameworks $\int$ and $\Sigma$ and we have compared the two expressions. Here we are going to express the ``iteration of the harmonic Frobenius'' in the frameworks $\int_{1,0}$ and $\Sigma$ and compare the two expressions. Keeping in mind the distinction between these three frameworks $\int_{1,0}$, $\int$ and $\Sigma$ will be essential in this paper and in subsequent ones. We note that the frameworks $\Sigma$ and $\int$ make sense for all weighted cyclotomic multiple harmonic sums whereas the framework $\int_{1,0}$ makes sense only for the prime weighted cyclotomic multiple harmonic sums and follows from a theorem.

\subsubsection{The generalization to negative numbers of iterations of the Frobenius}

The indices of $p$-adic cyclotomic multiple zeta values, of the form $\big((n_{i})_{d};(\xi_{i})_{d}\big)$, are distinct from the indices of cyclotomic weighted multiple harmonic sums (\ref{eq:harmonic sum}), of the form $\big((n_{i})_{d};(\xi_{i})_{d+1}\big)$.

\begin{Definition} \label{def harmonic word}A harmonic word over $e_{0\cup \mu_{N}}$, is a tuple $\big((n_{i})_{d};(\xi_{i})_{d+1}\big)$, with $d \in \mathbb{N}^{\ast}$ $(n_{i})_{d} \in (\mathbb{N}^{\ast})^{d}$, $(\xi_{i})_{d+1} \in \mu_{N}(K)^{d+1}$. We sometimes identify it with $e_{\xi_{d+1}} e_{0}^{n_{d}-1}e_{\xi_{d}} \ldots e_{0}^{n_{1}-1}e_{\xi_{1}}$. Let us denote by $\Wd_{\har}(e_{0\cup \mu_{N}})$ the set of harmonic words over $e_{0\cup \mu_{N}}$.
\end{Definition}

We now define, using (\ref{eq:formula of I-2}), an analogue of multiple harmonic sums associated with negative numbers of iterations of the Frobenius, and another analogue associated with the fixed point of the Frobenius. The notation ``$\har$'' that we are going to use will be justified by the subsequent papers \cite{II-1, II-2, II-3} and the notion of ``cyclotomic multiple harmonic values''. Below, $f[\frac{1}{1-e_{0}}w] = \sum_{l=0}^{\infty} f[e_{0}^{l}w]$.

\begin{Definition} \label{generalized pMHS}For any $w=e_{\xi_{d+1}}e_{0}^{n_{d}-1}e_{\xi_{d}}\ldots e_{0}^{n_{1}-1}e_{\xi_{1}}=\big( (n_{i})_{d};(\xi_{i})_{d+1}\big)$ we call generalized prime weighted multiple harmonic sums the following numbers.
\newline (i) For any $\alpha \in \mathbb{N}^{\ast}$, let $\displaystyle\har_{p,\alpha}(w) = \har_{p^{\alpha}}(w)$ and 
$\displaystyle\har_{p,-\alpha}(w) = (-1)^{d} \sum_{\xi} \xi^{-p^{\alpha}} \Ad_{\Phi^{(\xi)}_{p,-\alpha}}(e_{\xi})[\frac{1}{1-e_{0}}w]$ .
\newline (ii) For $\epsilon \in \{\pm 1\}$, let $\displaystyle \har_{q,\epsilon\infty}(w) = (-1)^{d} \sum_{\xi} \xi^{-1} \Ad_{\Phi^{(\xi)}_{q,\epsilon\infty}}(e_{\xi})[\frac{1}{1-e_{0}}w] $.
\newline (iii) If $p^{\alpha} = q^{\tilde{\alpha}}$, with $\tilde{\alpha} \in \mathbb{Z}$, we denote by $\har_{q,\tilde{\alpha}}=\har_{p,\alpha}$.
\end{Definition}

\subsubsection{The non-commutative generating series}

We now define generating series of prime weighted cyclotomic multiple harmonic sums, the numbers (\ref{eq:harmonic sum}) with $m=p^{\alpha}$.

We have defined in \cite{I-2} two variants of 
$K \langle \langle e_{0\cup\mu_{N}} \rangle \rangle^{\smallint}$ adapted to multiple harmonic sums: $K \langle \langle e_{0 \cup \mu_{N}} \rangle\rangle_{\har}^{\smallint} \subset K \langle\langle e_{0 \cup \mu_{N}} \rangle\rangle$ the vector subspace of the elements $f$ such that, for all words $w$ on $e_{0 \cup \mu_{N}}$, the sequence $(f[e_{0}^{l}w])_{l\in \mathbb{N}}$ is constant and $f[w'e_{0}]=0$ for all words $w'$; and $K \langle \langle e_{0 \cup \mu_{N}} \rangle\rangle_{\har}^{\Sigma} = K^{\Wd_{\har}(e_{0\cup\mu_{N}})}$. Here is the third variant: 

\begin{Definition} (i) Let $K \langle \langle e_{0\cup\mu_{N}} \rangle\rangle_{\har}^{\smallint_{1,0}}= \{f \in K\langle \langle e_{0\cup\mu_{N}} \rangle\rangle \text{ }|\text{ } \forall l>0,\text{ }\forall r>0,\text{ }\forall w \text{ word on } e_{0\cup \mu_{N}},\text{ }\\ f[e_{0}^{l}we_{0}^{r}] = 0\}$
\newline (ii) Let $K \langle \langle e_{0\cup\mu_{N}} \rangle\rangle_{\har,0}^{\smallint_{1,0}} = \{
f \in K \langle \langle e_{0\cup\mu_{N}} \rangle\rangle_{\har}^{\smallint_{1,0}} 
\text{ }|\text{ } \forall d \in \mathbb{N}^{\ast},\text{ }n_{i} \in \mathbb{N}^{\ast}\text{ } (1 \leqslant i \leqslant d),\text{ }\xi_{i} \in \mu_{N}(K)\text{ } (1 \leqslant i \leqslant d+1),\text{ } f[e_{\xi\xi_{d+1}}e_{0}^{n_{d}-1}e_{\xi\xi_{d}} \ldots e_{0}^{n_{1}-1}e_{\xi\xi_{1}}] =
\xi^{-1}f[e_{\xi_{d+1}}e_{0}^{n_{d}-1}e_{\xi\xi_{d}} \ldots e_{0}^{n_{1}-1}e_{\xi_{1}}] \}$.
\end{Definition}

The map $f \mapsto \sum\limits_{w \in \Wd_{\har}(e_{0\cup \mu_{N}})} f[w] \frac{1}{1-e_{0}}w$ clearly defines an isomorphism $K \langle\langle e_{0\cup\mu_{N}} \rangle \rangle^{\smallint_{1,0}}_{\har} \simlra K \langle \langle e_{0\cup\mu_{N}} \rangle \rangle^{\smallint}_{\har}$ of topological $K$-vector spaces, with topology defined by $\mathcal{N}_{D}$. However, we denote $K \langle \langle e_{0\cup\mu_{N}} \rangle\rangle_{\har}^{\smallint_{1,0}}$ and $K \langle \langle e_{0\cup\mu_{N}} \rangle\rangle_{\har}^{\smallint}$ differently in order to keep in mind the important distinction between the frameworks $\int_{1,0}$, $\int$, $\Sigma$.

Let us define the non-commutative generating series of the generalized prime weighted cyclotomic multiple harmonic sums.

\begin{Definition}
For any
$\tilde{\alpha} \in \mathbb{Z} \cup \{ \pm \infty \} - \{0\}$, let  
$\har_{q,\tilde{\alpha}} = \sum\limits_{w \in \Wd_{\har}(e_{0\cup \mu_{N}})}\har_{q,\tilde{\alpha}}(w) w \in K\langle\langle e_{0\cup \mu_{N}}\rangle\rangle_{\har}^{\smallint_{1,0}}$.
\end{Definition}

We note that, for $\tilde{\alpha} \in \mathbb{N}^{\ast}$, we have
$\har_{q,\tilde{\alpha}} \in K\langle\langle e_{0\cup \mu_{N}}\rangle\rangle_{\har,0}^{\smallint_{1,0}}$, because we have, for all $\xi \in \mu_{N}(K)$, $\xi^{-q^{\tilde{\alpha}}} = \xi^{-q} = \xi^{-1}$.

\subsection{The pro-unipotent harmonic actions and the harmonic Frobenius}

We review definitions from \cite{I-2} of the pro-unipotent harmonic actions $\circ_{\har}^{\smallint}$ and $\circ_{\har}^{\Sigma}$, and the harmonic Frobeniuses $(\tau(p^{\alpha})\phi^{\alpha})^{\smallint}_{\har}$ and $(\tau(p^{\alpha})\phi^{\alpha})^{\Sigma}_{\har}$.

Let $\Ad_{\tilde{\Pi}_{1,0}(K)}(e_{1})_{o(1)} = \Ad_{\tilde{\Pi}_{1,0}(K)}(e_{1}) \cap K \langle\langle e_{0\cup\mu_{N}} \rangle\rangle_{o(1)}$; by \cite{I-2}, proposition 1.3.5, it is a subgroup of $\Ad_{\tilde{\Pi}_{1,0}(K)}(e_{1})$ for the usual group structure of $\Spec(\mathcal{O}^{\sh,e_{0\cup \mu_{N}}})$, and for the adjoint Ihara product $\circ_{\Ad}^{\smallint_{1,0}}$; it is a complete topological group with the topology defined by $\mathcal{N}_{D}$, for both group structures.

\subsubsection{In the framework of integrals}

We review definitions from \cite{I-2} which will be useful in what follows.

Let $K \langle\langle e_{0 \cup \mu_{N}} \rangle\rangle^{\lim} \subset K \langle\langle e_{0 \cup \mu_{N}} \rangle\rangle$ be the vector subspace consisting of the elements $f\in K \langle\langle e_{0 \cup \mu_{N}} \rangle\rangle$ such that, for all words $w$ on $e_{0 \cup \mu_{N}}$, the sequence $(f[e_{0}^{l}w])_{l\in \mathbb{N}}$ has a limit in $K$, and $f[w'e_{0}]=0$ for all words $w'$. Let the map $\lim: K \langle \langle e_{0 \cup \mu_{N}} \rangle\rangle^{\lim} \rightarrow K \langle \langle e_{0 \cup \mu_{N}} \rangle\rangle_{\har}^{\smallint}$ be defined by, for all words $w$ over $e_{0\cup\mu_{N}}$, $(\lim f)[w] = \underset{l\rightarrow \infty}{\lim} f[e_{0}^{l}w].$ The $p$-adic pro-unipotent harmonic action of integrals (\cite{I-2}, definition 2.2.2) is the map 
$\circ^{\smallint}_{\har}: \Ad_{\tilde{\Pi}_{1,0}(K)_{o(1)}}(e_{1}) \times
(K \langle\langle e_{0 \cup \mu_{N}} \rangle\rangle_{\har}^{\smallint})^{\mathbb{N}}
\rightarrow 
(K \langle\langle e_{0 \cup \mu_{N}} \rangle\rangle_{\har}^{\smallint})^{\mathbb{N}}$ defined by 
$$\big( g, (h_{m})_{m\in\mathbb{N}} \big) \mapsto g \circ_{\har}^{\smallint} (h_{m})_{m\in\mathbb{N}} = \big( \lim \big( h_{m}(e_{0}, \big(\tau(m)(g^{(\xi)}\big)_{\xi}  \big)\big)_{m\in\mathbb{N}} $$

The harmonic Frobenius of integrals (\cite{I-2}, definition 2.3.5) is the map $(\tau(p^{\alpha})\phi^{\alpha})^{\smallint}_{\har}:
\big( K\langle\langle e_{0 \cup \mu_{N}} \rangle\rangle_{\har}^{\smallint} \big)^{\mathbb{N}}\rightarrow \big( K\langle\langle e_{0 \cup \mu_{N}} \rangle\rangle_{\har}^{\smallint}\big)^{\mathbb{N}}$ 
defined by 
$$ f \mapsto \Phi_{p,\alpha}^{-1}e_{1}\Phi_{p,\alpha} \circ_{\har}^{\smallint} \sigma^{\alpha}(f) . $$

\subsubsection{In the framework of series}

The $p$-adic pro-unipotent harmonic action of series (\cite{I-2}, proposition-definition 4.3.1) is a counterpart of $\circ^{\smallint}_{\har}$ found in terms of series. It is a map 
$$\circ_{\har}^{\Sigma}: K\langle \langle e_{0\cup \mu_{N}} \rangle\rangle_{\har,o(1)}^{\Sigma} \times (K\langle \langle e_{0\cup \mu_{N}} \rangle\rangle_{\har}^{\Sigma})^{\mathbb{N}} \rightarrow (K\langle \langle e_{0\cup \mu_{N}} \rangle\rangle_{\har}^{\Sigma})^{\mathbb{N}}  $$ 
where $ K\langle \langle e_{0\cup \mu_{N}} \rangle\rangle_{\har,o(1)}^{\Sigma}$ is defined in \cite{I-2}, definition 4.1.3. 

The harmonic Frobenius of series is 
$$ f \mapsto \har_{p^{\alpha}} \circ_{\har}^{\Sigma} \sigma^{\alpha}(f) . $$

We proved in \cite{I-2} that the harmonic Frobenius of integrals and the harmonic Frobenius of series are equal (\cite{I-2}, equations (0.3.3) and (0.3.5)). Thus, we see that the harmonic Frobenius is characterized by $\Phi_{p,\alpha}^{-1}e_{1}\Phi_{p,\alpha}$ or, equivalently, by $\har_{p^{\alpha}}$. This is why studying $\har_{p^{\alpha}}$ is equivalent to studying the harmonic Frobenius and, in the next sections, we will see $\har_{p^{\alpha}}$ as a function of $\alpha$, which amounts to studying the harmonic Frobenius as a function of $\alpha$.

\begin{Remark} We can define natural analogues of all the structures which are at base-points (1,0) and which are mentioned in this \S1, at base-points $(0,\xi)$ for any $\xi \in \mu_{N}(K)$, which are compatible with the morphism $(x \mapsto \xi x)_{\ast}$. All the results of this \S1 have natural analogs at base-points $(0,\xi)$ for any $\xi \in \mu_{N}(K)$. This will be used implicitly in the proofs of the next section.
\end{Remark}

\section{Iteration of the harmonic Frobenius of integrals at (1,0)}

We prove equation (\ref{eq:first of I-3}) in \S2.1, and we prove equation (\ref{eq:second of I-3}) in \S2.2.

\subsection{The fixed point equation of the harmonic Frobenius of integrals at (1,0)}

\subsubsection{The fixed point equation for the Frobenius of integrals}

We express the iterated Frobenius at base-points of (1,0) as a function of the number of iterations.

\begin{Proposition} (a) For all $\tilde{\alpha} \in \mathbb{N}^{\ast}$, we have
	$\Phi_{q,\infty} = \fix_{q^{\tilde{\alpha}_{0}},\Phi_{q,\tilde{\alpha}}}$ i.e.
	$$ \Phi_{q,-\tilde{\alpha}} \circ^{\smallint_{1,0}} \tau(q^{\tilde{\alpha}})\Phi_{q,-\infty} = \Phi_{q,-\infty}, $$
	$$ \tau(q^{\tilde{\alpha}})\Phi_{q,\infty} \circ^{\smallint_{1,0}} \Phi_{q,\tilde{\alpha}} = \Phi_{q,\infty} . $$
	(b) For the topology defined by $\mathcal{N}_{D}$ on $\Pi_{1,0}(K)$ we have:
	$$\displaystyle \Phi_{q,-\tilde{\alpha}} \underset{\tilde{\alpha} \rightarrow \infty}{\longrightarrow} \Phi_{q,-\infty}, $$ 
	$$\displaystyle \Phi_{q,\tilde{\alpha}} \underset{\tilde{\alpha} \rightarrow \infty}{\longrightarrow} \Phi_{q,\infty}. $$
	(c) For any $\tilde{\alpha} \in \mathbb{N}^{\ast}$, $f \in \Pi_{1,0}(K)$, $g \in \Pi^{(p^{\alpha})}_{1,0}(K)$,
	$$  \phi_{\frac{\log(q)}{\log(p)}}^{-\tilde{\alpha}}(f) = \sum_{n= 0}^{\infty} \Phi_{q,\infty} \circ^{\smallint_{1,0}} \bigg( \tau_{n} \big(\Phi_{q,-\infty} \circ^{\smallint_{1,0}} f \big) \bigg) \text{ }. \text{ } (q^{\tilde{\alpha}})^{n} , $$
	$$  \phi_{\frac{\log(q)}{\log(p)}}^{\tilde{\alpha}}(g) = \sum_{n = 0}^{\infty} \Phi_{q,\infty} \circ^{\smallint_{1,0}} \bigg( \tau_{n}\big(\Phi_{q,-\infty} \circ^{\smallint_{1,0}} g \big) \bigg) \text{ }. \text{ } (q^{\tilde{\alpha}})^{n} . $$
\end{Proposition} 

\begin{proof}
(a) Equations (\ref{eq: formula for Frob 2}) and (\ref{eq:Frobenius fixed point}) imply the first equation. The second equation is deduced from the first one by applying the inversion for the product $\circ^{\smallint_{1,0}}$, which commutes with $\tau$.
\newline (b) The first equation follows from lemma \ref{lemma Frobenius is p-contraction} and the discussion after proposition \ref{prop weighted Ihara is contraction}, and the second equation is deduced from the first one by applying the inversion for $\circ^{\smallint_{1,0}}$, knowing the structure of topological group of $(\Pi_{1,0}(K),\circ^{\smallint_{1,0}})$ for $\mathcal{N}_{D}$. Alternatively, the two equations follow from (i), the fact that 
$\tau(q^{\tilde{\alpha}})\Phi_{q,\infty} \underset{\tilde{\alpha} \rightarrow \infty}{\longrightarrow} 1$, 
$\tau(q^{\tilde{\alpha}})\Phi_{q,-\infty} \underset{\tilde{\alpha} \rightarrow \infty}{\longrightarrow} 1$
and that structure of topological group.
\newline (c) Follows from equations (\ref{eq: formula for Frob 1}), (\ref{eq: formula for Frob 2}), in which we replace $\Phi_{q,\tilde{\alpha}}$ and $\Phi_{q,-\tilde{\alpha}}$ by their expressions given by (i), and in which we express $\tau$ in terms of the maps $\tau_{n}$ defined in \S1.1.2 just after equation (1.2).
\end{proof}

In particular, by proposition \ref{prop fixed point equation at 1,0} (i), the existence and uniqueness of a Frobenius-invariant path, which follows from the theory of Coleman integration, is re-proved and made more precise in the very particular example of $\Pi_{1,0}(K)$. 

\subsubsection{Pro-unipotent harmonic action and harmonic Frobenius of integrals at (1,0)}

We move from discussing the Frobenius at (1,0) to discussing the harmonic Frobenius of integrals, in the framework $\int_{1,0}$ in the sense of \S1.3.1. In view of this result, we introduce new objects.

\begin{Definition} \label{definition sigma}Let $K\langle \langle e_{0\cup \mu_{N}} \rangle\rangle_{\widetilde{o(1)}} \subset K\langle \langle e_{0\cup \mu_{N}} \rangle\rangle$ be the set of elements $h$ such that all series of the type $\sum\limits_{l=0}^{\infty} h \big[e_{0}^{l} e_{\xi_{d+1}}e_{0}^{n_{d}-1}e_{\xi_{d}} \ldots e_{0}^{n_{1}-1}e_{\xi_{1}}\big]$, where $d$ and the $n_{i}$'s are positive integers and the $\xi_{i}$'s are $N$-th roots of unity, are convergent in $K$. Let the summation map be $S: K\langle\langle e_{0\cup \mu_{N}} \rangle\rangle_{\widetilde{o(1)}} \rightarrow K \langle\langle e_{0\cup\mu_{N}} \rangle\rangle_{\har,0}^{\smallint_{1,0}}$ defined by $h \mapsto \sum\limits_{\xi \in \mu_{N}(K)} \xi^{-1}\sum\limits_{\substack{d \in \mathbb{N}^{\ast} \\ \xi_{1},\ldots,\xi_{d+1} \in \mu_{N}(K) \\ n_{1},\ldots,n_{d} \in \mathbb{N}^{\ast}}} h^{(\xi)} \big[ \frac{1}{1-e_{0}}e_{\xi_{d+1}}e_{0}^{n_{d}-1}e_{\xi_{d}} \ldots e_{0}^{n_{1}-1}e_{\xi_{1}}\big]\text{ }e_{\xi_{d+1}}e_{0}^{n_{d}-1}e_{\xi_{d}} \ldots e_{0}^{n_{1}-1}e_{\xi_{1}}$.
\end{Definition}

We note that $K\langle \langle e_{0\cup \mu_{N}} \rangle\rangle_{\widetilde{o(1)}}$ contains $K\langle \langle e_{0\cup \mu_{N}} \rangle\rangle_{o(1)}$ defined in \S1.2. We now define a variant of the pro-unipotent harmonic action of integrals 
$\circ_{\har}^{\smallint}$. In the next statement, we denote by $\circ_{\Ad}^{\smallint_{1,0}}$
the extension of the adjoint Ihara product $\circ_{\Ad}^{\smallint_{1,0}}$ into a map $K\langle \langle e_{0\cup\mu_{N}}\rangle\rangle \times K\langle \langle e_{0\cup\mu_{N}}\rangle\rangle \rightarrow K\langle \langle e_{0\cup\mu_{N}}\rangle\rangle$ defined again by the formula of equation (\ref{eq:adjoint Ihara action}).

\begin{Definition} \label{def de Rham Ihara} Let the map
	$$ \circ_{\har,0}^{\smallint_{1,0}}: \Ad_{\tilde{\Pi}_{1,0}(K)_{o(1)}}(e_{1}) \times K \langle \langle e_{0\cup\mu_{N}} \rangle\rangle^{\smallint_{1,0}}_{\har,0}  \rightarrow  K \langle \langle e_{0\cup\mu_{N}}\rangle\rangle ^{\smallint_{1,0}}_{\har,0} $$
	characterized by the commutativity of the diagram:
	\begin{equation}  \label{eq:diagram}
	\begin{array}{ccccc}
	\Ad_{\tilde{\Pi}_{1,0}(K)_{o(1)}}(e_{1}) \times K\langle \langle e_{0\cup\mu_{N}}\rangle\rangle_{\widetilde{o(1)}} && \overset{\circ_{\Ad}^{\smallint_{1,0}}}{\longrightarrow}
	&&
	K\langle\langle e_{0\cup\mu_{N}}\rangle\rangle_{\widetilde{o(1)}} \\
	\downarrow{\id \times S} &&&& \downarrow{S} \\
	\Ad_{\tilde{\Pi}_{1,0}(K)_{o(1)}}(e_{1}) \times K\langle\langle e_{0\cup\mu_{N}} \rangle\rangle^{\smallint_{1,0}}_{\har} && \overset{\circ_{\har}^{\smallint_{1,0}}}{\longrightarrow} && K \langle \langle e_{0\cup\mu_{N}} \rangle\rangle^{\smallint_{1,0}}_{\har}
	\end{array} .
	\end{equation}
The pro-unipotent harmonic action of integrals at $(1,0)$ is the map
		$$ \circ_{\har}^{\smallint_{1,0}}: \Ad_{\tilde{\Pi}_{1,0}(K)_{o(1)}}(e_{1}) \times K \langle \langle e_{0\cup\mu_{N}} \rangle\rangle^{\smallint_{1,0}}_{\har} \rightarrow K \langle \langle e_{0\cup\mu_{N}}\rangle\rangle ^{\smallint_{1,0}}_{\har} $$
defined by the same formula as the one of $\circ_{\har}^{\smallint_{1,0}}$.
\end{Definition}

The basic properties of $\circ_{\har}^{\smallint_{1,0}}$ are summarized in the next proposition.

\begin{Proposition} \label{property of act 1,0 har} (i) $\circ_{\har}^{\smallint_{1,0}}$ is a well-defined group action of $(\Ad_{\tilde{\Pi}_{1,0}(K)_{o(1)}},\circ_{\Ad}^{\smallint_{1,0}})$ on $K \langle\langle e_{0\cup\mu_{N}} \rangle\rangle_{\har}^{\smallint_{1,0}}$, continuous for the topology defined by $\mathcal{N}_{D}$.

(ii) The isomorphism $K \langle\langle e_{0\cup\mu_{N}} \rangle\rangle_{\har}^{\smallint_{1,0}} \simlra K \langle\langle e_{0\cup\mu_{N}} \rangle\rangle_{\har}^{\smallint}$, $f \mapsto \sum_{w \in \Wd_{\har}(e_{0\cup\mu_{N}})} f[w] \frac{1}{1-e_{0}}w$ induces, for all $m \in \mathbb{N}^{\ast}$, a natural isomorphism of continuous group actions between the $m$-th term of $\circ_{\har}^{\smallint}$, namely $(g,h_{m}) \mapsto \lim (\tau(m)(g) \circ_{\Ad}^{\smallint_{0,0}} h_{m})$, and the action
$(g,h) \mapsto \tau(m)g \circ_{\har}^{\smallint_{1,0}} h$
\end{Proposition}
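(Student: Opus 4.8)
The plan is to deduce both assertions from the corresponding facts about the pro-unipotent harmonic action of integrals $\circ_{\har}^{\smallint}$ of \cite{I-2}, transported along the isomorphism $\iota\colon K\langle\langle e_{0\cup\mu_{N}}\rangle\rangle^{\smallint_{1,0}}_{\har}\simlra K\langle\langle e_{0\cup\mu_{N}}\rangle\rangle^{\smallint}_{\har}$ of Proposition \ref{prop isomorphism with I-2}; in fact I would prove the comparison statement first and then read off the first assertion as the case $m=1$ (where $\tau(1)=\id$). The first step is to make $\circ_{\har}^{\smallint_{1,0}}$ explicit from Definition \ref{def De Rham Ihara}: for $g$ in (the natural extension of) $\Ad_{\tilde{\Pi}_{1,0}(K)_{o(1)}}(e_{1})$ and $f\in K\langle\langle e_{0\cup\mu_{N}}\rangle\rangle^{\smallint_{1,0}}_{\har}$ one has $g\circ_{\har}^{\smallint_{1,0}}f=\Sigma(g\circ_{\Ad}^{\smallint_{1,0}}\tilde f)$ for any lift $\tilde f\in K\langle\langle e_{0\cup\mu_{N}}\rangle\rangle_{\widetilde{o(1)}}$ of $f$, and a canonical such lift is $f$ itself viewed inside $K\langle\langle e_{0\cup\mu_{N}}\rangle\rangle$ and supported on harmonic words (then $\Sigma(f)=f$, since $e_{0}^{l}w$ is not a harmonic word for $l>0$).

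Next I would check well-definedness, i.e. the commutativity of the diagram (\ref{eq:diagram}). This has two parts: that $\circ_{\Ad}^{\smallint_{1,0}}$ maps $\Ad_{\tilde{\Pi}_{1,0}(K)_{o(1)}}(e_{1})\times K\langle\langle e_{0\cup\mu_{N}}\rangle\rangle_{\widetilde{o(1)}}$ into $K\langle\langle e_{0\cup\mu_{N}}\rangle\rangle_{\widetilde{o(1)}}$, so that $\Sigma$ applies to the output; and that $\Sigma(g\circ_{\Ad}^{\smallint_{1,0}}\tilde f)$ depends on $\tilde f$ only through $\Sigma(\tilde f)$. I would obtain both by expanding the substitution (\ref{eq:adjoint Ihara action}), writing $g\circ_{\Ad}^{\smallint_{1,0}}\tilde f=\sum_{u}\tilde f[u]\,u(e_{0},(g^{(x)})_{x})$ and splitting each word $u=e_{0}^{k}u'$ off its leading run of $e_{0}$'s: since each $g^{(x)}$ has lowest-degree term $e_{x}$, the left multiplication by $\frac{1}{1-e_{0}}$ built into $\Sigma$ corresponds to a re-indexing of $k$; and since a harmonic word ends in a non-$e_{0}$ letter, only harmonic $u'$ contribute to the coefficient of a given harmonic word (trailing $e_{0}$'s being unmatchable), so that coefficient depends only on the numbers $\Sigma(\tilde f)[w]=\tilde f[\frac{1}{1-e_{0}}w]$; summability of the output follows from $g\in K\langle\langle e_{0\cup\mu_{N}}\rangle\rangle_{o(1)}$, $\tilde f\in K\langle\langle e_{0\cup\mu_{N}}\rangle\rangle_{\widetilde{o(1)}}$ and the estimates of \S1.2. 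This is the $(1,0)$-analogue of the corresponding verification for $\circ_{\har}^{\smallint}$ in \cite{I-2}, and I expect it to be the main obstacle; the remaining steps are formal.

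Granting well-definedness, the group-action axioms and continuity descend through the surjection $\Sigma$ from the top row of (\ref{eq:diagram}): the extended $\circ_{\Ad}^{\smallint_{1,0}}$ is associative with left neutral element $e_{1}$ and continuous for $\mathcal{N}_{\Lambda,D}$ (hence for $\mathcal{N}_{D}$) by \S1.1 and \S1.2, and $\Sigma$ is a continuous surjection for $\mathcal{N}_{D}$ which preserves depth (so $\mathcal{N}_{D}(\Sigma h)\leqslant\mathcal{N}_{D}(h)$) and restricts to the identity on the elements supported on harmonic words; concretely $e_{1}\circ_{\har}^{\smallint_{1,0}}f=\Sigma(e_{1}\circ_{\Ad}^{\smallint_{1,0}}f)=f$, and $(g_{1}\circ_{\Ad}^{\smallint_{1,0}}g_{2})\circ_{\har}^{\smallint_{1,0}}f$ and $g_{1}\circ_{\har}^{\smallint_{1,0}}(g_{2}\circ_{\har}^{\smallint_{1,0}}f)$ both unwind to $\Sigma\big(g_{1}\circ_{\Ad}^{\smallint_{1,0}}(g_{2}\circ_{\Ad}^{\smallint_{1,0}}f)\big)$, the second equality using well-definedness for $g_{1}$ and the lift $g_{2}\circ_{\Ad}^{\smallint_{1,0}}f$ of $g_{2}\circ_{\har}^{\smallint_{1,0}}f$; continuity passes through likewise.

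Finally the comparison. The harmonic action $\circ_{\har}^{\smallint}$ of \cite{I-2} is built from a diagram of the same shape as (\ref{eq:diagram}) with $\Sigma$ replaced by the summation map $\Sigma^{\smallint}$ of \cite{I-2} into $K\langle\langle e_{0\cup\mu_{N}}\rangle\rangle^{\smallint}_{\har}$, its $m$-th term inserting the rescaling $\tau(m)$ before $\circ_{\Ad}^{\smallint_{1,0}}$. I would use that $\iota\circ\Sigma=\Sigma^{\smallint}$ on $K\langle\langle e_{0\cup\mu_{N}}\rangle\rangle_{\widetilde{o(1)}}$ — both send $h$ to $\sum_{w\in\Wd_{\har}(e_{0\cup\mu_{N}})}\big(\sum_{k}h[e_{0}^{k}w]\big)\frac{1}{1-e_{0}}w$ — and that, for $f\in K\langle\langle e_{0\cup\mu_{N}}\rangle\rangle^{\smallint_{1,0}}_{\har}$, the element $f$ supported on harmonic words is at once a $\Sigma$-lift of $f$ and a $\Sigma^{\smallint}$-lift of $\iota(f)$, since $\iota(f)[e_{0}^{l}w]=f[w]=\sum_{k}f[e_{0}^{k}w]$. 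Then for every $m$, $\iota\big(\tau(m)g\circ_{\har}^{\smallint_{1,0}}f\big)=\iota\big(\Sigma((\tau(m)g)\circ_{\Ad}^{\smallint_{1,0}}f)\big)=\Sigma^{\smallint}\big((\tau(m)g)\circ_{\Ad}^{\smallint_{1,0}}f\big)$, which is exactly the value of the $m$-th term of $\circ_{\har}^{\smallint}$ at $(g,\iota(f))$; since $\iota$ is a homeomorphism of topological $K$-vector spaces (Proposition \ref{prop isomorphism with I-2}), this is the asserted isomorphism of continuous group actions, and taking $m=1$, together with the fact from \cite{I-2} that the $m$-th terms of $\circ_{\har}^{\smallint}$ are continuous group actions, yields the first assertion independently of the argument above.
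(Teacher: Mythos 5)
Your proof is correct and follows essentially the same route as the paper's: verify well-definedness by analysing the dual formula for $\circ_{\Ad}^{\smallint_{1,0}}$ (the leading $e_{0}$-block of a word is re-indexed by the $\frac{1}{1-e_{0}}$ in $\Sigma$, trailing $e_{0}$'s cannot match a harmonic word, and summability comes from the estimates of \S1.2), descend the group-action axioms and continuity through the continuous surjection $\Sigma$, and then compare with $\circ_{\har}^{\smallint}$. The only substantive difference is in the comparison step: you invoke the identity $\iota\circ\Sigma=\Sigma^{\smallint}$ and the observation that the canonical lift of $f$ (supported on harmonic words) is simultaneously a $\Sigma$-lift of $f$ and a $\Sigma^{\smallint}$-lift of $\iota(f)$, whereas the paper instead isolates the single formula $(g\circ_{\Ad}^{\smallint_{1,0}}h)[\frac{1}{1-e_{0}}w]=\lim_{l\to\infty}(g\circ_{\Ad}^{\smallint_{1,0}}\Sigma h)[e_{0}^{l}w]$; these are two ways of packaging the same computation. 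Your closing remark that the first assertion can also be transported from the corresponding fact in \cite{I-2} via the $m=1$ case is a valid shortcut for the group-action and continuity parts, but note it still presupposes the well-definedness step you carry out in your second paragraph, so it does not replace that step.
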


\begin{proof} (i) For any $g$ in $\Ad_{\tilde{\Pi}_{1,0}(K)}(e_{1})$, and any word $w$, the map $f \mapsto S( g \circ_{\Ad}^{\smallint_{1,0}} f)[w]$ factors in a natural way through the map $f \mapsto S f$. This can be seen by writing the formula for the dual of $\circ_{\Ad}^{\smallint_{1,0}}$. The fact that $\circ_{\Ad}^{\smallint_{1,0}}$ sends $\Ad_{\tilde{\Pi}_{1,0}(K)_{o(1)}}(e_{1}) \times K\langle \langle e_{0\cup\mu_{N}}\rangle\rangle_{\widetilde{o(1)}}$ to $K\langle \langle e_{0\cup\mu_{N}}\rangle\rangle_{\widetilde{o(1)}}$ follows from the shuffle equation for elements of $\tilde{\Pi}_{1,0}(K)_{o(1)}$ and from the formula for the dual of $\circ_{\Ad}^{\smallint_{1,0}}$. Finally, $S$ is surjective:  any $h$ in $K \langle \langle e_{0\cup\mu_{N}} \rangle\rangle^{\smallint_{1,0}}_{\har,0}$ is equal to $S \big( \sum\limits_{\substack{\xi_{1},\ldots,\xi_{d+1} \in \mu_{N}(K) \\ n_{1},\ldots,n_{d} \in \mathbb{N}^{\ast}}} h \big[ e_{\xi_{d+1}}e_{0}^{n_{d}-1}e_{\xi_{d}} \ldots e_{0}^{n_{1}-1}e_{\xi_{1}}\big]\text{ }e_{\xi_{d+1}}e_{0}^{n_{d}-1}e_{\xi_{d}} \ldots e_{0}^{n_{1}-1}e_{\xi_{1}} \big)$. This proves that $\circ_{\har,0}^{\smallint_{1,0}}$ is well-defined, and that one can write a formula for it, which is linear with respect to the second argument; thus, $\circ_{\har}^{\smallint_{1,0}}$ is well-defined.

Let $g_{1},g_{2} \in \Ad_{\tilde{\Pi}_{1,0}(K)_{o(1)}}(e_{1})$ and $f \in K \langle \langle e_{0\cup\mu_{N}} \rangle\rangle_{o(1)}$; we have $g_{2} \circ_{\Ad}^{\smallint_{1,0}} (g_{1} \circ_{\Ad}^{\smallint_{1,0}} f) = (g_{2} \circ_{\Ad}^{\smallint_{1,0}} g_{1}) \circ_{\Ad}^{\smallint_{1,0}} f$; applying the map $S$ and the commutativity of (\ref{eq:diagram}) gives: $g_{2} \circ_{\har}^{\smallint_{1,0}} (S(g_{1} \circ^{\smallint_{1,0}}_{\Ad} f)) = (g_{2} \circ^{\smallint_{1,0}}_{\Ad} g_{1}) \circ_{\har}^{\smallint_{1,0}} S(f)$,
and applying again the commutativity of (\ref{eq:diagram}) we deduce:
$g_{2} \circ_{\har}^{\smallint_{1,0}} (g_{1} \circ_{\har}^{\smallint_{1,0}} S (f)) = (g_{2} \circ_{\Ad}^{\smallint_{1,0}} g_{1})\circ_{\har}^{\smallint_{1,0}} S(f)$. This proves that $\circ_{\har}^{\smallint_{1,0}}$ is a group action.

The map $\circ_{\Ad}^{\smallint_{1,0}}$ is continuous and each sequence $(w_{l})_{l \in \mathbb{N}}$ such that $w_{l} = e_{0}^{l}e_{\xi_{d+1}}e_{0}^{n_{d}-1}e_{\xi_{d}} \ldots e_{0}^{n_{1}-1}e_{\xi_{1}}$ for all $l$ with $e_{\xi_{d+1}}e_{0}^{n_{d}-1}e_{\xi_{d}} \ldots e_{0}^{n_{1}-1}e_{\xi_{1}}$ independent of $l$ satisfies $\displaystyle \limsup_{l \rightarrow \infty} \depth(w_{l}) < +\infty$ and $\displaystyle \weight(w_{l}) \underset{l \rightarrow \infty}{\rightarrow} +\infty$; thus the map $S$ is continuous. By the commutativity of (\ref{eq:diagram}), this implies that $\circ_{\har}^{\smallint_{1,0}} \circ (\id \times S)$ is continuous. If a sequence $(h_{u})_{u\in\mathbb{N}}$ in $K\langle\langle e_{0\cup\mu_{N}} \rangle\rangle^{\smallint_{1,0}}_{\har}$ tends to $h \in K\langle\langle e_{0\cup\mu_{N}} \rangle\rangle^{\smallint_{1,0}}_{\har}$, we can find a sequence $(f_{u})_{u\in\mathbb{N}}$ and $f$ in $K\langle\langle e_{0\cup\mu_{N}}\rangle\rangle_{\widetilde{o(1)}}$ such that $h_{u} = S f_{u}$ for all $u$, $h=f$ and $f_{u} \rightarrow f$. We deduce that $\circ_{\har}^{\smallint_{1,0}}$ is continuous.

(ii) The relation between $\circ_{\har}^{\smallint_{1,0}}$ and $\circ_{\har}^{\smallint}$ follows from the definitions of these two objects and the following property of $\circ_{\Ad}^{\smallint_{1,0}}$, which is itself a consequence of the formula for the dual of $\circ_{\Ad}^{\smallint_{1,0}}$: for all $h \in \Ad_{\tilde{\Pi}_{1,0}(K)_{o(1)}}(e_{1})$ and $g \in \Ad_{\tilde{\Pi}_{1,0}(K)_{o(1)}}(e_{1})$, and for any word $w$ over $e_{0 \cup \mu_{N}}$, we have $\displaystyle(g \circ_{\Ad}^{\smallint_{1,0}} h)[\frac{1}{1-e_{0}}w] = \lim_{l \rightarrow \infty}(g \circ_{\Ad}^{\smallint_{1,0}} S h )[e_{0}^{l}w]$.
\end{proof}

In the next example, the indices $(n_{1})$ and $(n_{1},n_{2})$ are harmonic words in the sense of definition \ref{def harmonic word}.

\begin{Example} If $N=1$, the terms of depth one and two of $g \circ_{\har}^{\smallint_{1,0}} h$ are given as follows:
	$$ (g \circ_{\har}^{\smallint_{1,0}} h)(n_{1})= 
	h(n_{1}) + g[\frac{1}{1-e_{0}}e_{1}e_{0}^{n_{1}-1}e_{1}], $$
	\begin{multline*}
	(g \circ_{\har}^{\smallint_{1,0}} h)(n_{1},n_{2}) = 
	h(n_{1},n_{2}) + g[\frac{1}{1-e_{0}}e_{1}e_{0}^{n_{2}-1}e_{1}e_{0}^{n_{1}-1}e_{1}] 
	\\ + \sum_{r=0}^{n_{1}-1} g [\frac{1}{1-e_{0}} e_{1}e_{0}^{n_{2}-1}e_{1}e_{0}^{r} ]\text{ } h(n_{1}-r) + \sum_{r=0}^{n_{2}-1} g[e_{0}^{r}e_{1}e_{0}^{n_{1}-1}e_{1}] \text{ }h(n_{2}-r) .
	\end{multline*}
\end{Example}

We now deduce from definition \ref{def de Rham Ihara} the counterpart of the harmonic Frobenius of integrals in the framework $\int_{1,0}$.

\begin{Definition} \label{harmonic Frobenius of integrals}For any $\alpha \in \mathbb{N}^{\ast}$ divisible by $\frac{\log(q)}{\log(p)}$, let the harmonic Frobenius of integrals at $(1,0)$, iterated $\alpha$ times, be the map 
$(\tau(p^{\alpha})\phi_{\alpha})_{\har}^{\smallint_{1,0}}: K \langle \langle e_{0\cup\mu_{N}} \rangle\rangle^{\smallint_{1,0}}_{\har} \rightarrow K \langle \langle e_{0\cup\mu_{N}} \rangle\rangle^{\smallint_{1,0}}_{\har}$ defined by $f \mapsto \Ad_{\Phi_{p,\alpha}}(e_{1}) \circ_{\har}^{\smallint_{1,0}} f$.
\end{Definition}

\begin{Corollary} The isomorphism $K \langle\langle e_{0\cup\mu_{N}} \rangle\rangle_{\har}^{\smallint_{1,0}} \simlra K \langle\langle e_{0\cup\mu_{N}} \rangle\rangle_{\har}^{\smallint}$, $f \mapsto \sum_{w \in \Wd_{\har}(e_{0\cup\mu_{N}})} f[w] \frac{1}{1-e_{0}}w$ induces, for all $m \in \mathbb{N}$, an isomorphism of continuous maps between $(\tau(p^{\alpha})\phi_{\alpha})_{\har}^{\smallint_{1,0}}$ and the the $m=1$ term of $(\tau(p^{\alpha})\phi_{\alpha})_{\har}$.
\end{Corollary}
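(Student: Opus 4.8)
The plan is to obtain this corollary as a direct translation of Definition \ref{harmonic Frobenius of integrals} through the last assertion of Proposition \ref{property of act 1,0 har}: no new input is needed beyond unwinding the definition of the harmonic Frobenius of integrals of \cite{I-2} and its decomposition into $m$-th terms. So the first step would be to recall from \cite{I-2}, \S2, that the harmonic Frobenius of integrals iterated $\alpha$ times is by construction the operation $h \mapsto \Ad_{\Phi_{p,\alpha}}(e_{1}) \circ_{\har}^{\smallint} h$ on $(K\langle\langle e_{0\cup\mu_{N}}\rangle\rangle_{\har}^{\smallint})^{\mathbb{N}}$, whose $m$-th term is the component map $h \mapsto \big( \Ad_{\Phi_{p,\alpha}}(e_{1}) \circ_{\har}^{\smallint} h \big)_{m}$ attached to the $m$-th factor of the pro-unipotent harmonic action of integrals; here $\Ad_{\Phi_{p,\alpha}}(e_{1})$ is a fixed element of $\Ad_{\tilde{\Pi}_{1,0}(K)_{o(1)}}(e_{1})$.

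The second step is the actual identification. By the last part of Proposition \ref{property of act 1,0 har}, the isomorphism $K\langle\langle e_{0\cup\mu_{N}}\rangle\rangle_{\har}^{\smallint_{1,0}} \simlra K\langle\langle e_{0\cup\mu_{N}}\rangle\rangle_{\har}^{\smallint}$ of Proposition \ref{prop isomorphism with I-2} identifies the $m$-th term of $\circ_{\har}^{\smallint}$ with the action $(g,h) \mapsto \tau(m)g \circ_{\har}^{\smallint_{1,0}} h$. Specialising to $m = 1$ and using $\tau(1) = \id$, this becomes $(g,h) \mapsto g \circ_{\har}^{\smallint_{1,0}} h$, so that the same isomorphism intertwines the $m = 1$ term of the harmonic Frobenius of integrals with the map $f \mapsto \Ad_{\Phi_{p,\alpha}}(e_{1}) \circ_{\har}^{\smallint_{1,0}} f$, which is exactly $(\tau(q^{\tilde{\alpha}}) \phi_{\tilde{\alpha}\frac{\log(q)}{\log(p)}})_{\har}^{\smallint_{1,0}}$ from Definition \ref{harmonic Frobenius of integrals}. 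Continuity for the $\mathcal{N}_{D}$-topology on both sides is part of Proposition \ref{property of act 1,0 har} (and of the corresponding statement of \cite{I-2}), and the isomorphism of Proposition \ref{prop isomorphism with I-2} is a homeomorphism for that topology; composing these facts upgrades the identification to an isomorphism of continuous maps.

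I expect the only genuine difficulty to be the bookkeeping of the weight normalisation: one must check that the generating series governing the $m$-th term of the harmonic Frobenius of integrals of \cite{I-2} is $\Ad_{\Phi_{p,\alpha}}(e_{1})$ itself and not a $\tau$-twist thereof, so that the substitution $m = 1$ really does make the factor $\tau(m)$ disappear with nothing compensating it. This is precisely the point at which Definition \ref{harmonic Frobenius of integrals} is calibrated, the weight $q^{\tilde{\alpha}}$ having been absorbed into the passage from $\phi_{\tilde{\alpha}\frac{\log(q)}{\log(p)}}$ to $\tau(q^{\tilde{\alpha}})\phi_{\tilde{\alpha}\frac{\log(q)}{\log(p)}}$, hence into $\Phi_{p,\alpha} = \tau(p^{\alpha})\phi_{\alpha}(1)$. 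Once that compatibility of conventions is confirmed, the corollary is immediate from the two steps above.
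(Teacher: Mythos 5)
Your proof is correct and follows exactly the route the paper takes: the paper's own proof is simply "Direct consequence of Proposition \ref{property of act 1,0 har} and the definitions of the two harmonic Frobeniuses," and your argument unwinds precisely those two ingredients — the identification of the $m$-th term of $\circ_{\har}^{\smallint}$ with $(g,h)\mapsto\tau(m)g\circ_{\har}^{\smallint_{1,0}}h$ from Proposition \ref{property of act 1,0 har}, specialised at $m=1$ so that $\tau(1)=\id$, together with Definition \ref{harmonic Frobenius of integrals}. Your closing remark about verifying that the generating series is $\Ad_{\Phi_{p,\alpha}}(e_{1})$ with no residual $\tau$-twist is exactly the one convention check implicit in the paper's terse proof, so you have in fact supplied slightly more detail than the original.
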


\begin{proof} Direct consequence of proposition \ref{property of act 1,0 har} and the definitions of the two harmonic Frobeniuses.
\end{proof}

\subsubsection{Fixed point equation for the harmonic Frobenius}

We can now deduce from \S2.1.1, via \S2.1.2, an expression of the iterated harmonic Frobenius of integrals at (1,0) as a function of its number of iterations, whose coefficients are expressed in terms of the fixed point of the Frobenius at (1,0): this is equation (\ref{eq:first of I-3}).

By proposition \ref{prop fixed point equation at 1,0}, we have 
$\tau(q^{\tilde{\alpha}}) \Phi_{q,\infty} \circ^{\smallint_{1,0}} \Phi_{q,\tilde{\alpha}} = \Phi_{q,\infty}$.
By definition, the inverse of $\Phi_{q,\infty}$ for $\circ^{\smallint_{1,0}}$ is $\Phi_{q,-\infty}$ thus the inverse of 
$\tau(q^{\tilde{\alpha}})(\Phi_{q,\infty})$ is 
$\tau(q^{\tilde{\alpha}})(\Phi_{q,-\infty})$. Whence $\Phi_{q,\tilde{\alpha}} = \tau(q^{\tilde{\alpha}}) \Phi_{q,-\infty}
\circ^{\smallint_{1,0}} \Phi_{q,\infty}$. 
By applying $\Ad(e_{1})$, we deduce $\Ad_{\Phi_{q,\tilde{\alpha}}}(e_{1}) = \tau(q^{\tilde{\alpha}}) \Ad_{\Phi_{q,-\infty}}(e_{1})
\circ_{\Ad}^{\smallint_{1,0}} \Ad_{\Phi_{q,\infty}}(e_{1})$, whence 
$S\Ad_{\Phi_{q,\tilde{\alpha}}}(e_{1}) = S(\tau(q^{\tilde{\alpha}}) \Ad_{\Phi_{q,-\infty}}(e_{1})
\circ_{\Ad}^{\smallint_{1,0}} \Ad_{\Phi_{q,\infty}}(e_{1}))$.
By the commutative diagram in definition \ref{def de Rham Ihara}, this amounts to 
$S\Ad_{\Phi_{q,\tilde{\alpha}}}(e_{1}) = \tau(q^{\tilde{\alpha}}) \Ad_{\Phi_{q,-\infty}}(e_{1})
\circ_{\har}^{\smallint_{1,0}} S\Ad_{\Phi_{q,\infty}}(e_{1}))$.
By equation (\ref{eq:formula of I-2}), we have 
$S\Ad_{\Phi_{q,\tilde{\alpha}}}(e_{1}) =\har_{q,\tilde{\alpha}}$ and 
$S\Ad_{\Phi_{q,\infty}}(e_{1}))=\har_{q,\infty}$. Whence equation (\ref{eq:first of I-3}).

\begin{Remark} The power series expansion of any $\har_{q^{\tilde{\alpha}}}\big((n_{i})_{d};(\xi_{i})_{d+1}\big)$ in terms of $q^{\tilde{\alpha}}$, given by equation (\ref{eq:first of I-3}) have coefficients of degrees in $\{1,\ldots,\displaystyle \min_{1\leqslant i \leqslant d} n_{i}-1\}$ equal to $0$. This follows from $\Phi^{(\xi)}_{q,-\infty}[e_{0}] = 0$ which implies $\Ad_{\Phi_{q,-\infty}^{(\xi)}}(e_{\xi}) = e_{\xi} + \text{ terms of depth }\geqslant 2$, for all $\xi \in \mu_{N}(K)$.
\end{Remark}

\begin{Example} In depth one and two and for $\mathbb{P}^{1} - \{0,1,\infty\}$, we have:
	$$ \har_{q^{\tilde{\alpha}}}(n_{1}) = 
	\har_{q^{\infty}}(n_{1}) + 
	\sum_{n = n_{1}}^{\infty} (q^{\tilde{\alpha}})^{n} \Ad_{\Phi_{q,-\infty}}(e_{1})[e_{0}^{n-n_{1}}e_{1}e_{0}^{n_{1}-1}e_{1}] . $$
\begin{multline*} \har_{q^{\tilde{\alpha}}}(n_{1},n_{2})  =  
\har_{q^{\infty}}(n_{1},n_{2}) + \sum_{n = n_{1}+n_{2}}^{\infty} (q^{\tilde{\alpha}})^{n} \Ad_{\Phi_{q,-\infty}}(e_{1})[e_{0}^{n-n_{1}-n_{2}}e_{1}e_{0}^{n_{2}-1}e_{1}e_{0}^{n_{1}-1}e_{1}] 
\\ + \sum_{r_{1}=0}^{n_{1}-1} \sum_{n = n_{2}+r_{1}}^{\infty} (q^{\tilde{\alpha}})^{n} \Ad_{\Phi_{q,-\infty}}(e_{1})[e_{0}^{n-n_{2}-r_{1}}e_{1}e_{0}^{n_{2}-1}e_{1}e_{0}^{r_{1}}] \text{ } \har_{q^{\infty}}(n_{1}-r_{1}) 
\\ + \sum_{r_{2}=0}^{n_{2}-1} (q^{\tilde{\alpha}})^{n_{1}+r_{2}} \Ad_{\Phi_{q,-\infty}}(e_{1})[e_{0}^{r}e_{1}e_{0}^{n_{1}-1}e_{1}] \text{ } \har_{q^{\infty}}(n_{2}-r_{2}) .
\end{multline*}
\end{Example}

\subsection{Iteration equation of the harmonic Frobenius of integrals at (1,0)}

We are now going to re-express the iterated harmonic Frobenius of integrals at (1,0) as a function of the number of iterations, in a different way, without involving the fixed point. 

\subsubsection{Iteration on the Frobenius at (1,0)}

As in \S2.1.2, the first step is to describe the iterated Frobenius at base-points (1,0) as a function of its number of iterations, this time without involving the fixed point but, instead, the map of iteration of the weighted Ihara action (Definition \ref{iteration of the weighted Ihara action}).

\begin{Proposition} \label{prop fixed point equation at 1,0} For all $\tilde{\alpha}_{0},\tilde{\alpha} \in \mathbb{N}^{\ast}$ such that $\tilde{\alpha}_{0}|\tilde{\alpha}$, we have
$$ \Phi_{q,\alpha} = \iter_{\frac{\tilde{\alpha}}{\tilde{\alpha_{0}}},q^{\tilde{\alpha_{0}}}}^{\smallint_{1,0}} \big( \Phi_{q,\tilde{\alpha_{0}}} \big), $$
$$ \Phi_{q,\tilde{\alpha}} = 
{\Phi_{q,-\tilde{\alpha}}}^{-1_{\smallint_{1,0}}}. $$
More generally, for all $\alpha \in \mathbb{N}^{\ast}$, we have $\Phi_{p,-\alpha} = \Phi_{p,-1} \circ^{\smallint_{1,0}} \tau(\lambda)\sigma(\Phi_{p,-1}) \cdots \circ^{\smallint_{1,0}} \tau(\lambda^{\alpha-1})\sigma^{\alpha-1}(\Phi_{p,-1})$ and 
$\Phi_{p,-\alpha} = \Phi_{p,\alpha}^{-1_{\smallint_{1,0}}}$.
\end{Proposition}

\begin{proof} This follows from (\ref{eq: formula for Frob 1}) and (\ref{eq: formula for Frob 2}), and definition \ref{definition weighted}.
\end{proof}

This generalizes a statement appearing in \cite{Furusho 2}, proof of proposition 3.1.

Before continuing on the study of the iterated Frobenius, we remark that, by proposition 2.1.1 and proposition 2.1.2, considering the coefficients of these non-commutative formal power series series, one has equations relating the $p$-adic cyclotomic multiple zeta values $\zeta_{p,\alpha}(w)$ and $\zeta_{p,\alpha'}(w')$, for any $\alpha,\alpha' \in \mathbb{Z} \cup \{\pm \infty\} - \{0\}$, as follows:

\begin{Corollary} \label{comparison of vector spaces} Let $\alpha \in \mathbb{Z}\cup \{\pm \infty\} - \{0\}$. For any $n \in \mathbb{N}^{\ast}$, let  $\mathcal{Z}_{p,\alpha,n}$ be the $\mathbb{Q}$-vector space generated by the numbers $\zeta_{p,\alpha}(w)$ with $w$ a word of weight $n$.
	\newline (i) For $\alpha \in \mathbb{N}^{\ast}$,
	$\mathcal{Z}_{p,\alpha,n}= \sigma^{-\alpha}(\mathcal{Z}_{p,1,n})$, and $\mathcal{Z}_{p,-\alpha,n}= \sigma^{\alpha}(\mathcal{Z}_{p,-1,n})$.
	\newline If $\alpha\in \mathbb{Z} - \{0\}$ is such that $p^{|\alpha|}$ is a power of $q$ then 
	$\mathcal{Z}_{p,\alpha,n}= \mathcal{Z}_{q,\infty,n} = \mathcal{Z}_{q,-\infty,n}$.
	\newline In particular, the dimension of $\mathcal{Z}_{p,\alpha,n}$ is independent of $\alpha \in \mathbb{Z} \cup \{\pm \infty\} - \{0\}$.
	\newline (ii) $\mathcal{N}_{\Lambda,D} (\Phi_{p,\alpha})$ is independent of $\alpha \in \mathbb{Z} \cup \{\pm \infty\} - \{0\}$.
\end{Corollary}

\begin{proof}
	For any positive integers $n,d$, let 
	$\mathcal{O}_{n,d}^{\sh,e_{0\cup\mu_{N}}}\subset \mathcal{O}^{\sh,e_{0\cup\mu_{N}}}$ be the subspace generated by words of weight $n$ and depth $d$. Let
	$\mathcal{O}_{n,\leqslant d}^{2}=\sum\limits_{\substack{r \geqslant 2
			\\ n_{1}+\ldots+n_{r} = n
			\\ d_{1}+\ldots+d_{r} \leqslant d}} 
	\mathcal{O}_{n_{1},d_{1}}^{\sh,e_{0\cup\mu_{N}}} \text{ }\sh \ldots \sh\text{ } \mathcal{O}_{n_{r},d_{r}}^{\sh,e_{0\cup\mu_{N}}}$. Let $\lambda \in K$ such that $|\lambda|_{p}<1$ and $a \in \mathbb{N}^{\ast}$. For each $w$ word on $e_{0\cup\mu_{N}}$, of weight $n$ and depth $d$, by equations (\ref{eq:char fixed point}), (\ref{eq:inverse Ihara}), (\ref{eq: char iteration}) we have the following congruences, where the duals refer to the duality between $\mathcal{O}^{\sh,e_{0\cup \mu_{N}}}$ and the points of the corresponding group scheme:
	$$ (1 - \lambda^{n})\fix_{\lambda}^{\vee}(w) \equiv w \mod \mathcal{O}_{n, \leqslant d}^{2} , $$
	$$ {\iter^{\smallint_{1,0}}_{a,\lambda}}^{\vee}(w) \equiv \frac{1 - \lambda^{an}}{1 -\lambda^{n}} w \mod \mathcal{O}_{n,\leqslant  d}^{2} , $$
	$$ (-1_{\smallint_{1,0}})^{\vee}(w) \equiv - w \mod \mathcal{O}_{n, \leqslant d}^{2} . $$
By induction on $(n,d)$, this implies the following equalities, where ${}_{\mathbb{Z}(\lambda)} \mathcal{O}^{\sh,e_{0\cup \mu_{N}}}_{n,\leqslant d}$ is the $\mathbb{Z}(\lambda)$-module generated by words of weight $n$ and of depth $\leqslant d$:
$$\fix_{\lambda}^{\vee}
\big(  {}_{\mathbb{Z}(\lambda)} \mathcal{O}^{\sh,e_{0\cup \mu_{N}}}_{n,\leqslant d} \big) =  \iter^{\vee}_{a_{\circ^{\smallint_{1,0}},\lambda}} \big( {}_{\mathbb{Z}(\lambda)} \mathcal{O}^{\sh,e_{0\cup \mu_{N}}}_{n,\leqslant d} \big) = 
{(-1_{\smallint_{1,0}})}^{\vee} \big( {}_{\mathbb{Z}(\lambda)} \mathcal{O}^{\sh,e_{0\cup \mu_{N}}}_{n,\leqslant d} \big) =  {}_{\mathbb{Z}(\lambda)} \mathcal{O}^{\sh,e_{0\cup \mu_{N}}}_{n,\leqslant d} , $$
and, that, for all $g \in \Pi_{\xi,0}(K)$, we have
$$ \mathcal{N}_{\Lambda,D}( \fix_{\lambda,g}) 
= \mathcal{N}_{\Lambda,D} (g^{a_{\circ^{\smallint_{1,0}},\lambda}})
= \mathcal{N}_{\Lambda,D}(g^{-1_{\smallint_{1,0}}})
= \mathcal{N}_{\Lambda,D}(g) .
$$
This implies the result via proposition \ref{prop fixed point equation at 1,0} (i) (a) and (ii).
\end{proof}

A particular case of corollary \ref{comparison of vector spaces} (i) can be found in \cite{Yamashita}, proposition 3.10, and an explicit example is given in \cite{Furusho 2}, example 2.10.

\begin{Remark} The proof of Corollary \ref{comparison of vector spaces} indicates that the equations of Corollary \ref{comparison of vector spaces} are compatible with the depth filtration and the bounds on valuations of $p$MZV$\mu_{N}$'s, which are two parameters in our computation. This can be viewed as a prerequisite for the next paragraphs in which we show a kind of compatibility between the iteration of the Frobenius and our computation of $p$MZV$\mu_{N}$'s.
\end{Remark}

\subsubsection{The iteration of the harmonic Frobenius}

As in \S2.1.2, we move from discussing the Frobenius at (1,0), in \S2.1.2, to discussing the harmonic Frobenius. We first describe how the map $\iter^{\smallint_{1,0}}_{a,\lambda}$ depends on its parameters $\lambda$ and $a$.

\begin{Definition} \label{definition lautre tau} For any $d \in \mathbb{N}^{\ast}$, let $\tau_{\ast,\leqslant d}: K \langle\langle e_{0\cup \mu_{N}} \rangle\rangle \rightarrow  K \langle\langle e_{0\cup \mu_{N}} \rangle\rangle$ be the map which sends $f = \sum\limits_{w\text{ word}} f[w]w$ to $\sum\limits_{\substack{w\text{ word} \\ \depth(w)\leqslant d}} f[w]w$.
\end{Definition}

\begin{Proposition} \label{definition of el weighted}Let $\Lambda$, $\tilde{\Lambda}$, $\textbf{a}$ be three formal variables. There exists a map
	\begin{equation} \iter^{\smallint_{1,0}}(\Lambda,\tilde{\Lambda}, \textbf{a}): \Ad_{\tilde{\Pi}_{1,0}(K)}(e_{1}) \rightarrow K \langle\langle e_{0\cup\mu_{N}} \rangle\rangle [\tilde{\Lambda},\textbf{a}](\Lambda) ,
	\end{equation}
	such that, for any $f \in  \Ad_{\tilde{\Pi}_{1,0}(K)}(e_{1})$, word $w$, $a \in \mathbb{N}^{\ast}$ such that $a > \depth(w)$ and $\lambda \in K - \{0\}$ which is not a root of unity, we have:
	$$ \iter^{\smallint_{1,0}}_{a,\lambda}(f)[w] = \iter^{\smallint_{1,0}}(\lambda,\lambda^{a},a)(f)[w] . $$
\end{Proposition}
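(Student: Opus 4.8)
The plan is to produce the three-variable function $\iter^{\smallint_{1,0}}(\Lambda,\tilde\Lambda,\textbf{a})$ by inspecting the explicit formula (\ref{eq: char iteration}) for $\iter^{\smallint_{1,0}}_{a,\lambda}(g)$ and showing that, when one expands the product and reads off the coefficient of a fixed word $w$, the dependence on $a$ and on $\lambda$ \emph{separates} into (a) powers of $\lambda$ coming from the weight-grading operator $\tau$, which we can record formally via the variable $\Lambda$, and (b) a polynomial-in-$a$ part together with the geometric-type sums $\sum_{k=0}^{a-1}\lambda^{kn}=\frac{1-\lambda^{an}}{1-\lambda^{n}}$ coming from stacking $a$ copies, which we record via $\tilde\Lambda$ (standing for $\lambda^{a}$) and $\textbf{a}$. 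The key point making this work for a \emph{fixed} word $w$ is that the $j$-th factor $g\big(\lambda^{j}e_{0},(\lambda^{j}\Ad_{g_{\xi}^{j}}(e_{\xi}))_{\xi}\big)$ contributes to $w[\cdot]$ only through subwords of depth $\le\depth(w)$; hence only finitely many of the factors (at most $\depth(w)+1$ of them, since $g\in\Ad_{\tilde\Pi_{1,0}}(e_1)$ has no weight-$0$ or depth-$0$ part and each factor used must contribute at least one letter $e_\xi$ beyond the leftmost $e_1$) can contribute nontrivially, and the hypothesis $a>\depth(w)$ guarantees we are already in the stable range where increasing $a$ only appends factors whose contribution to $w$ is via their constant term $1$.

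Concretely I would proceed as follows. First, using Definition \ref{definition lautre tau}, replace each factor $g(\lambda^{j}e_{0},(\lambda^{j}\Ad_{g_{\xi}^{j}}(e_{\xi}))_{\xi})$ by its depth-$\le\depth(w)$ truncation $\tau_{\ast,\le\depth(w)}$ of it, which does not change $w[\cdot]$; note this truncation is a polynomial in $\lambda^{j}$ (via the explicit $\tau$-rescaling) and in the coefficients of $g$ and of the $g_\xi$. Second, expand the (now finite-in-relevant-degree) product over $j=0,\dots,a-1$ and collect, for each way of distributing the letters of $w$ among a choice of factors $0\le j_1<\cdots<j_s\le a-1$ with $s\le\depth(w)$, the resulting monomial; the $\lambda$-exponent attached to such a term is a fixed linear form in $(j_1,\dots,j_s)$ with nonnegative integer coefficients (the partial weights). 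Third, sum over the allowed index sets $\{j_1<\cdots<j_s\}\subseteq\{0,\dots,a-1\}$: each such sum is, by a standard computation, a $\mathbb{Z}$-linear combination of products $\binom{a}{\,\cdot\,}$-type polynomials in $a$ with the geometric sums $\frac{1-\lambda^{an}}{1-\lambda^{n}}$ (valid because $\lambda$ is not a root of unity, so no denominator vanishes), i.e. an element of a ring generated by $a$, $\lambda$, $\lambda^{-1}$, and $\lambda^{a}$, with denominators only of the shape $1-\lambda^{n}$. Fourth, define $\iter^{\smallint_{1,0}}(\Lambda,\tilde\Lambda,\textbf{a})(f)[w]$ to be the resulting expression with every $\lambda^{a}$ replaced by $\tilde\Lambda$, every isolated $\lambda$ replaced by $\Lambda$, and every $a$ replaced by $\textbf{a}$; one checks this lands in $K\langle\langle e_{0\cup\mu_{N}}\rangle\rangle[\tilde\Lambda,\textbf{a}](\Lambda)$ (a Laurent-type series in $\Lambda$ since the denominators $1-\Lambda^{n}$ are invertible there) and that it is well-defined, i.e. the specialization $\Lambda\mapsto\lambda$, $\tilde\Lambda\mapsto\lambda^{a}$, $\textbf{a}\mapsto a$ returns $\iter^{\smallint_{1,0}}_{a,\lambda}(f)[w]$ for all admissible $(a,\lambda)$ — this is immediate from the construction, and the fact that it is \emph{this} expression for all $a>\depth(w)$ (not just asymptotically) is exactly where the stability in the index-set summation is used.

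The main obstacle I expect is the bookkeeping in the third step: one must verify cleanly that the sum $\sum_{0\le j_1<\cdots<j_s\le a-1}\lambda^{c_1 j_1+\cdots+c_s j_s}$, for fixed nonnegative integers $c_i$, lies in the claimed ring with the claimed denominators and depends \emph{polynomially} on $a$ once coupled with the $\lambda^{a}$ terms — this is a finite but slightly delicate induction on $s$ (peeling off the largest index $j_s$ and using $\sum_{j_s}\lambda^{(c_s+c_{s-1}+\cdots)j_s}$ of geometric type), and one has to be a little careful when some partial sum $c_1+\cdots+c_i$ vanishes, which cannot actually happen here because every $e_\xi$-letter of $w$ contributes strictly positive weight to the factor carrying it. Once this combinatorial lemma is in place the rest is formal. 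A secondary, purely cosmetic point is to argue that, because $g\in\Ad_{\tilde\Pi_{1,0}}(e_1)$ vanishes in weight $0$, only $s\le\depth(w)$ factors are ever used, so the constraint $a>\depth(w)$ is exactly what is needed and no larger threshold appears.
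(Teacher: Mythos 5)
Your proposal follows the same architecture as the paper's proof: start from the explicit product formula (\ref{eq: char iteration}), truncate each factor to depth $\leqslant \depth(w)$ via $\tau_{\ast,\leqslant d}$, dualize the multiplication so that the coefficient of $w$ is a sum over deconcatenations of $w$ among a choice of at most $\depth(w)$ of the $a$ factors, and reduce to a combinatorial lemma about summing over the index sets $\{j_{1}<\dots<j_{s}\}\subset\{0,\dots,a-1\}$. That part is right. The paper, however, treats two points that your plan elides, and they are precisely where the variable $\textbf{a}$ enters.

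The first gap is in your step 1. You write that the truncation of $g(\lambda^{j}e_{0},(\lambda^{j}\Ad_{g_{\xi}^{j}}(e_{\xi}))_{\xi})$ is ``a polynomial in $\lambda^{j}$\dots and in the coefficients of $g$ and of the $g_{\xi}$.'' This is not so: $g_{\xi}^{j}$ is the $j$-th power of $g_{\xi}$, and expanding $\Ad_{g_{\xi}^{j}}(e_{\xi})$ with $\epsilon_{\xi}=g_{\xi}-1$, $\tilde{\epsilon}_{\xi}=g_{\xi}^{-1}-1$ produces binomial coefficients ${j \choose m}{j \choose m'}$ in front of $\tilde{\epsilon}_{\xi}^{m'}e_{\xi}\epsilon_{\xi}^{m}$, subject to $m+m'+1\leqslant d$ and $m,m'\leqslant j$. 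So the coefficient contributed by factor $j$ is a \emph{polynomial in $j$}, not a constant. This polynomial-in-$j$ dependence is exactly what produces, after summing, the genuine polynomial-in-$\textbf{a}$ part of the answer. Your step-3 lemma as written, about bare exponential sums $\sum_{j_{1}<\dots<j_{s}\leqslant a-1}\lambda^{c_{1}j_{1}+\dots+c_{s}j_{s}}$ with all $c_{i}>0$, yields only rational expressions in $\lambda$ and $\lambda^{a}$ and would never require the variable $\textbf{a}$; the lemma you actually need (as in the paper's proof) is about $\sum_{L\leqslant I_{1}<\dots<I_{\delta}\leqslant a-1}\prod_{j}P_{j}(I_{j})\lambda^{C_{j}I_{j}}$ with $P_{j}$ genuine polynomials. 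The second gap concerns the threshold: you attribute $a>\depth(w)$ to ``new factors only contributing their constant term $1$,'' but only $\leqslant\depth(w)$ factors ever contribute a nonempty subword for \emph{any} $a$, so that is not what the threshold is for. What the paper actually does is split the index range $\{0,\dots,a-1\}$ at $d$: for $i_{j}>d$ the constraints $m_{j}\leqslant i_{j}$, $m'_{j}\leqslant i_{j}$ become redundant given $m_{j}+m'_{j}+1\leqslant d$, so the summand becomes a \emph{uniform} polynomial in $i_{j}$, and the lemma applies; the indices $\leqslant d$ give a finite, $a$-independent tail. The hypothesis $a>\depth(w)$ is what makes this split available. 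Without noticing the $j$-dependence through $g_{\xi}^{j}$ you cannot see why that split (and hence the threshold) is needed, nor where $\textbf{a}$ comes from.
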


\begin{proof} With the assumptions of the statement, let $d=\depth(w)$; knowing that $g[\emptyset] = 1$, dualizing the multiplication of the $a$ factors in equation (\ref{eq: char iteration}) gives
	\begin{multline} \label{eq:trois un deux}
	\iter^{\smallint_{1,0}}_{a,\lambda} (g) [w] =
	\\ \sum_{0 \leqslant d' \leqslant d} \text{ }\sum_{0\leqslant i_{1} < \ldots <i_{d'} \leqslant a-1} \text{ }\sum_{\substack{w_{i_{1}} \not= \emptyset ,\ldots, w_{i_{d'}} \not= \emptyset \\ w_{i_{1}}\ldots w_{i_{d'}} = w}}	
	g \big(\lambda^{i_{1}-1}e_{0},(\lambda^{i_{1}-1}
	\Ad_{{g^{(\xi)}}^{i_{1}-1}}(e_{\xi}))_{\xi}\big)[w_{i_{1}}] 
	\times \ldots 
	\times 
	\\ g \big(\lambda^{i_{d'}-1}e_{0},(\lambda^{i_{1}-1}
	\Ad_{{g^{(\xi)}}^{i_{d'}-1}}(e_{\xi}))_{\xi}\big)[w_{i_{d'}}] .
	\end{multline}
	\indent We have assumed that $a>d$; let us thus separate the indices $i_{j}\leqslant d$ and $i_{j}> d$: 
	$\displaystyle \sum_{0 \leqslant d' \leqslant d} \text{ } \sum_{0\leqslant i_{1} < \ldots <i_{d'} \leqslant a-1}  = \sum_{0 \leqslant d'' \leqslant d' \leqslant d} \text{ } \sum_{0\leqslant i_{1} < \ldots <i_{d''}\leqslant d} \text{ }\sum_{d < i_{d''+1}<\ldots < i_{d'} \leqslant a-1}$. This yields an expression of (\ref{eq:trois un deux}), as a $K$-linear combination indexed by $\{(d'',d') \text{ | } 0 \leqslant d'' \leqslant d' \leqslant d\}\times \{\text{deconcatenations of }w\text{ in }d' \text{ non-empty subwords}\}$ which is independent of $a$ but depends polynomially of $\lambda$, and with coefficients as well independent of $a$ and polynomial functions of $\lambda$, of the numbers
	\begin{multline} \label{eq: trois un trois}
	\sum_{d< i_{d''} < \ldots <i_{d'} \leqslant a-1}  g\big(\lambda^{i_{1}-1}e_{0},(\lambda^{i_{d''}-1}
	\Ad_{{g^{(\xi)}}^{i_{d''}-1}}(e_{\xi}))_{\xi}\big)[w_{i_{d''}}] 
	\times \ldots \times 
	\\
	g(\lambda^{i_{d'}-1}e_{0},(\lambda^{i_{d''}-1}
	\Ad_{{g^{(\xi)}}^{i_{d''}-1}}(e_{\xi}))_{\xi})[w_{i_{d'}}] .
	\end{multline}
	\indent Let $\epsilon^{(\xi)} = g^{(\xi)} - 1$, and $\tilde{\epsilon}^{(\xi)} = {g^{(\xi)}}^{-1} - 1$. For $0 \leqslant i_{j} \leqslant a-1$, we have (where $\tau_{\ast,\leqslant d}$ is defined in definition \ref{definition lautre tau}): $\displaystyle  \tau_{\ast,\leqslant d}(\Ad_{{g^{(\xi)}}^{i_{j} -1}}(e_{\xi})) = \sum_{\substack{m_{j},m'_{j}\in \mathbb{N} \\ m_{j} \leqslant i_{j},\text{ } m'_{j} \leqslant i_{j},\text{ } m_{j}+m'_{j}+1 \leqslant d}}
	{i_{j} \choose m_{j}}{i_{j} \choose m'_{j}}
	\tilde{\epsilon}_{\xi}^{m'_{j}} e_{\xi} \epsilon_{\xi}^{m_{j}}$. When $i_{j} > d$, the collection of conditions $\{m_{j},m'_{j}\in \mathbb{N},\text{ } m_{j} \leqslant i_{j}, \text{ }m'_{j} \leqslant i_{j},\text{ }m_{j}+m'_{j}+1 \leqslant d\}$ is equivalent to 
	$\{m_{j},m'_{j}\in \mathbb{N},\text{ }m_{j}+m'_{j}+1 \leqslant d\}$; thus, dualizing in (\ref{eq: trois un trois}) each factor
	$g(\lambda^{i_{j}-1}e_{0},(\lambda^{i_{j}-1}\Ad_{{g^{(\xi)}}^{i_{j}-1}}(e_{\xi}))_{\xi})$ tells us that (\ref{eq: trois un trois}) is a linear combination, independent of $a$ and $\lambda$, of sums
	$\displaystyle \sum_{d< i_{d''} < \ldots <i_{d'} \leqslant a-1} 
	\prod_{j=d''}^{d'} {i_{j} \choose m_{j}}{i_{j} \choose m'_{j}} \lambda^{i_{j} \weight_{j}}$, where $\weight_{j} \in \mathbb{N}^{\ast}$ arises as the weight of a certain quotient sequence of $w_{i_{j}}$, and ${i_{j} \choose m_{j}}{i_{j} \choose m'_{j}}$ are polynomials of $i_{j}$.
	\newline\indent Finally, any function of $a$ of the form
	$\displaystyle \sum_{L \leqslant  I_{1} < \ldots < I_{\delta} \leqslant a-1} \prod_{j=1}^{\delta} P_{j}(I_{j}) \lambda^{I_{j} C_{j}}$ with $L,\delta \in \mathbb{N}^{\ast}$, $C_{1},\ldots,C_{\delta} \in \mathbb{N}^{\ast}$ and $P_{1},\ldots,P_{\delta} \in K[T]$ polynomials, depends on $a$ as a polynomial function of $(a,\lambda^{a})$: one can reduce this statement to $L=0$ by splitting an iterated sum over $0 \leqslant I_{1}<\ldots < I_{\delta} \leqslant a-1$ at $L$ and by induction on $\delta$, then use, again by induction on $\delta$ that, for all $\deg_{j} \in \mathbb{N}^{\ast}$, we have $\sum\limits_{I_{j}=0}^{\deg_{j}} I_{j}^{\delta'} \lambda^{C_{j}I_{j}} = 
	\big( \lambda^{C_{j}} \frac{d}{d(\Lambda^{C_{j}})} \big)^{l} \big( \frac{\lambda^{C_{j}\deg_{j}} - 1}{\Lambda^{C_{j}}-1} \big)$.
\end{proof}

Let us now define the map of iteration of the harmonic Frobenius of integrals at (1,0), by using the above iteration map and the summation map $S$ of definition \ref{definition sigma}:

\begin{Definition}\label{la definition a citer dans le theoreme}
Let $\iter_{\har}^{\smallint_{1,0}}(a,\lambda) = S \circ \iter^{\smallint_{1,0}}(\lambda,\lambda^{a},a):  \Ad_{\tilde{\Pi}_{1,0}(K)}(e_{1}) \rightarrow K \langle\langle e_{0\cup \mu_{N}} \rangle\rangle_{\har}^{\smallint_{1,0}}$.
\end{Definition}

We can now deduce from \S2.2.1 and the previous proposition a second description of the iterated harmonic Frobenius as a function of its number of iterations: this is equation (\ref{eq:second of I-3}). 

By proposition \ref{prop fixed point equation at 1,0}, we have 
$\Phi_{q,\tilde{\alpha}} = \iter^{\smallint_{1,0}}_{\frac{\tilde{\alpha}}{\tilde{\alpha}_{0}},q^^{\tilde{\alpha}_{0}}(\Phi_{q,\tilde{\alpha}_{0}})$, thus by proposition \ref{definition of el weighted}, we have 
$\Phi_{q,\tilde{\alpha}} = \iter^{\smallint_{1,0}}(q^{\tilde{\alpha}_{0}}, q^{\tilde{\alpha}},\frac{\tilde{\alpha}}{\tilde{\alpha}_{0}})(\Phi_{q,\tilde{\alpha}_{0}})$, whence 
$S\Phi_{q,\tilde{\alpha}} = S\iter^{\smallint_{1,0}}(q^{\tilde{\alpha}_{0}}, q^{\tilde{\alpha}},\frac{\tilde{\alpha}}{\tilde{\alpha}_{0}})(\Phi_{q,\tilde{\alpha}_{0}})$. By definition \ref{la definition a citer dans le theoreme}, this amounts to $S\Phi_{q,\tilde{\alpha}} =\iter_{\har}^{\smallint_{1,0}} (\frac{\tilde{\alpha}}{\tilde{\alpha}_{0}},q^{\tilde{\alpha}})(\Phi_{q,\tilde{\alpha}_{0}})$. Finally, by equation (\ref{eq:formula of I-2}), we have $S\Phi_{q,\tilde{\alpha}} = \har_{q,\tilde{\alpha}}$.
Whence equation (\ref{eq:second of I-3}).

\section{Iteration of the harmonic Frobenius of series}

In this section, we prove equation (\ref{eq:third of I-3}) and we discuss its meaning.

\subsection{Prime weighted multiple harmonic sums as functions of the number of iterations of the Frobenius}

In this section we study how $\har_{q^{\tilde{\alpha}}}\big((n_{i})_{d};(\xi_{i})_{d+1}\big)$ depends on $\tilde{\alpha}$.

The first step is to write a $p$-adic expression of $\har_{q^{\tilde{\alpha}}}\big((n_{i})_{d};(\xi_{i})_{d+1}\big)$, obtained by considering the  $q^{\tilde{\alpha}_{0}}$-adic expansion of the indices $m_{1},\ldots,m_{d}$ of the domain of summation of $\har_{q^{\tilde{\alpha}}}\big((n_{i})_{d};(\xi_{i})_{d+1}\big)$ as in equation (\ref{eq:harmonic sum}).

\begin{Lemma} \label{proposition starting}We have, for any $d\in \mathbb{N}^{\ast}$, positive integers $n_{i}$ ($1 \leqslant i \leqslant d$), and $N$-th roots of unity $\xi_{i}$ ($1 \leqslant i \leqslant d+1$),
\begin{multline} \label{eq:511}\har_{q^{\tilde{\alpha}}}\big((n_{i})_{d};(\xi_{i})_{d+1}\big) = 
\\
\sum_{\substack{(l_{i})_{d} \in \mathbb{N}^{d}
\\ (v_{i})_{d},(u_{i})_{d},(r_{i})_{d} \in \mathbb{N}^{d} \times \mathbb{N}^{d}
\times \{1,\ldots,q^{\tilde{\alpha}_{0}}-1\}
\\ u_{i} \leqslant q^{\tilde{\alpha}_{0}(v_{i+1}-v_{i}-1)} (q^{\tilde{\alpha}_{0}} u_{i+1} + r_{i+1})-1  \text{ }\text{ if } v_{i}<v_{i+1}
\\ u_{i}\leqslant u_{i+1} \text{ }
\text{ if } v_{i}=v_{i+1} \text{ and } r_{i}<r_{i+1}
\\ u_{i} \leqslant u_{i+1} - 1 \text{ }\text{ if } v_{i}= v_{i+1} \text{ and } r_{i}\geqslant r_{i+1}
\\ q^{\tilde{\alpha}_{0}(v_{i}-v_{i+1}-1)}(q^{\tilde{\alpha}_{0}} u_{i}+r_{i}) \leqslant u_{i+1} \text{ } \text{ if } v_{i}>v_{i+1} }}
(q^{\tilde{\alpha}})^{\sum_{i=1}^{d}n_{i}} \bigg(\frac{1}{\xi_{d+1}}\bigg)^{q^{\tilde{\alpha}}}
\bigg( \prod_{j=1}^{d} \bigg(\frac{\xi_{j+1}}{\xi_{j}} \bigg)^{u_{j}+r_{j}} {-n_{j} \choose l_{j}}  \frac{(q^{\tilde{\alpha}_{0}}u_{j})^{l_{j}}}{r_{j}^{l_{j}+n_{j}}} \bigg) .
\end{multline}
\end{Lemma}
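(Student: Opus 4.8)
The plan is a direct computation. I start from the definition (\ref{eq:harmonic sum}) of $\har_{q^{\tilde\alpha}}$ (that is, (\ref{eq:harmonic sum}) with $m=q^{\tilde\alpha}$) and reorganize the finite sum over $0<m_1<\dots<m_d<q^{\tilde\alpha}$ according to the $q^{\tilde\alpha_0}$-adic expansion of each index $m_j$; this is available precisely because $\tilde\alpha_0\mid\tilde\alpha$, so that $q^{\tilde\alpha}$ is a power of $q^{\tilde\alpha_0}$.

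First I record the elementary bijection $\mathbb{N}^{\ast}\simlra\mathbb{N}\times\{1,\dots,q^{\tilde\alpha_0}-1\}\times\mathbb{N}$, $m_j\mapsto(v_j,r_j,u_j)$, where $v_j$ is the largest integer with $q^{\tilde\alpha_0 v_j}\mid m_j$, $r_j\in\{1,\dots,q^{\tilde\alpha_0}-1\}$ is the residue of $m_j/q^{\tilde\alpha_0 v_j}$ modulo $q^{\tilde\alpha_0}$ (nonzero by choice of $v_j$), and $u_j\in\mathbb{N}$ the remaining quotient, so that $m_j=q^{\tilde\alpha_0 v_j}(q^{\tilde\alpha_0}u_j+r_j)$. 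Substituting into $\prod_j m_j^{-n_j}$ and expanding each factor as a binomial series
$$ (q^{\tilde\alpha_0}u_j+r_j)^{-n_j}=r_j^{-n_j}\Bigl(1+\tfrac{q^{\tilde\alpha_0}u_j}{r_j}\Bigr)^{-n_j}=\sum_{l_j\geq 0}\binom{-n_j}{l_j}\frac{(q^{\tilde\alpha_0}u_j)^{l_j}}{r_j^{\,l_j+n_j}} $$
reproduces the factors $\binom{-n_j}{l_j}(q^{\tilde\alpha_0}u_j)^{l_j}r_j^{-l_j-n_j}$ appearing in (\ref{eq:511}). The series converges in $K$: since $1\leq r_j\leq q^{\tilde\alpha_0}-1$, the integer $r_j$ is not divisible by $q^{\tilde\alpha_0}=p^{\mathfrak{o}\tilde\alpha_0}$, so $|q^{\tilde\alpha_0}u_j/r_j|_p\leq 1/p<1$. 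For the cyclotomic factors I use $q=p^{\mathfrak{o}}$ with $\mathfrak{o}$ the order of $p$ in $(\mathbb{Z}/N\mathbb{Z})^{\times}$, so $q\equiv 1\pmod N$ and hence $m_j=q^{\tilde\alpha_0 v_j}(q^{\tilde\alpha_0}u_j+r_j)\equiv u_j+r_j\pmod N$; thus $(\xi_{j+1}/\xi_j)^{m_j}=(\xi_{j+1}/\xi_j)^{u_j+r_j}$ and $(1/\xi_{d+1})^{q^{\tilde\alpha}}=1/\xi_{d+1}$ (kept in the shape $(1/\xi_{d+1})^{q^{\tilde\alpha}}$ to match the later formulas), while the weight factor $(q^{\tilde\alpha})^{\sum_{i=1}^{d} n_i}$ is carried along unchanged.

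The one substantive step is the translation of $0<m_1<\dots<m_d<q^{\tilde\alpha}$ into conditions on the triples $(v_j,r_j,u_j)$. Comparing $m_i$ with $m_{i+1}$ via the above decomposition, the inequality $m_i<m_{i+1}$ splits into the four cases $v_i<v_{i+1}$; $v_i=v_{i+1}$ with $r_i<r_{i+1}$; $v_i=v_{i+1}$ with $r_i\geq r_{i+1}$; and $v_i>v_{i+1}$. In each case one divides by the appropriate power of $q^{\tilde\alpha_0}$ and uses $1\leq r_j\leq q^{\tilde\alpha_0}-1$ to rewrite $m_i<m_{i+1}$ as an inequality between $u_i$ and an explicit integer expression in $u_{i+1},r_{i+1}$ (or conversely), which is exactly the corresponding one of the four displayed conditions; the $-1$ shifts record whether the rewritten inequality is strict. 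The outer bound $m_d<q^{\tilde\alpha}$ becomes the restriction $v_d\leq\tilde\alpha/\tilde\alpha_0-1$ together with the matching bound on $u_d$, which keeps the sum in (\ref{eq:511}) finite. Collecting the digit decomposition, the binomial expansion, the cyclotomic simplification and this constraint translation yields (\ref{eq:511}).

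The main obstacle is purely the bookkeeping of this last step: checking that the four cases are exhaustive and mutually exclusive, that every strict/non-strict inequality (and every $-1$) survives the division by powers of $q^{\tilde\alpha_0}$, and that, for each of the finitely many $(v_i,r_i,u_i)_d$, the associated multiple series over $(l_i)_d$ may be rearranged freely --- which is immediate from the bound $|q^{\tilde\alpha_0}u_j/r_j|_p\leq 1/p$ --- so that the finite identity (\ref{eq:harmonic sum}) is faithfully reorganized into (\ref{eq:511}).
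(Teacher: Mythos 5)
Your proof is correct and takes essentially the same route as the paper's: the unique $q^{\tilde{\alpha}_{0}}$-adic decomposition $m_{j}=q^{\tilde{\alpha}_{0}v_{j}}(q^{\tilde{\alpha}_{0}}u_{j}+r_{j})$, the binomial expansion of $(q^{\tilde{\alpha}_{0}}u_{j}+r_{j})^{-n_{j}}$, the cyclotomic simplification $\xi^{m_{j}}=\xi^{u_{j}+r_{j}}$ coming from $q\equiv 1\pmod{N}$, and the case-by-case translation of $0<m_{1}<\dots<m_{d}<q^{\tilde{\alpha}}$ into the four displayed inequalities, exactly as in equation (\ref{eq:change of domain of summation}). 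The only thing you add is the explicit $p$-adic convergence bound $|q^{\tilde{\alpha}_{0}}u_{j}/r_{j}|_{p}\leqslant 1/p$, which the paper leaves implicit but which is a correct and worthwhile remark.
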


\begin{proof} Let $(m_{1},\ldots,m_{d}) \in \mathbb{N}^{d}$ such that $0<m_{1}<\ldots < m_{d}<q^{\tilde{\alpha}}$. There is a unique way to write $m_{i} = (q^{\tilde{\alpha}_{0}})^{v_{i}} (q^{\tilde{\alpha}_{0}}u_{i}+r_{i})$ \noindent with $v_{i} \in \mathbb{N}$, $u_{i} \in \mathbb{N}$, $r_{i}\in \{1,\ldots,q^{\tilde{\alpha}_{0}}-1\}$: for each $(m_{1},\ldots,m_{d})$ and each $i \in \{1,\ldots,d\}$, $v_{i}$ is the $q^{\tilde{\alpha}_{0}}$-adic valuation of $m_{i}$, and $u_{i}$ and $r_{i}$ are, respectively, the quotient and the remainder of the Euclidean division of $m_{i}q^{-\tilde{\alpha}_{0}v_{i}}$ by $q^{\tilde{\alpha}_{0}}$.
\newline\indent We have, for any $\xi \in \mu_{N}(K)$, $\xi^{(q^{\tilde{\alpha}_{0}})^{v_{i}} (q^{\tilde{\alpha}_{0}}u_{i}+r_{i})} = \xi^{(q^{\tilde{\alpha}_{0}}u_{i}+r_{i})} = \xi^{u_{i}+r_{i}}$, and we write $(q^{\tilde{\alpha}_{0}}u_{i}+r_{i})^{-n_{i}} = 
r_{i}^{-n_{i}} (\frac{q^{\tilde{\alpha}_{0}}u_{i}}{r_{i}}+1)^{-n_{i}} = \sum\limits_{l_{i} \in \mathbb{N}} {-n_{i} \choose l_{i}} (q^{\tilde{\alpha}_{0}}u_{i})^{l_{i}} r_{i}^{-n_{i}-l_{i}}$ for each $i$. This gives 
$$ \bigg(\frac{1}{\xi_{d+1}}\bigg)^{q^{\tilde{\alpha}}}  \displaystyle\prod_{i=1}^{d} (\frac{\xi_{i+1}}{\xi_{i}})^{m_{i}} m_{i}^{-n_{i}} = \bigg(\frac{1}{\xi_{d+1}}\bigg)^{q^{\tilde{\alpha}}}   \prod_{j=1}^{d} \bigg( \sum_{l_{j}\in \mathbb{N}} \bigg(\frac{\xi_{j+1}}{\xi_{j}} \bigg)^{u_{j}+r_{j}} {-n_{j} \choose l_{j}}  \frac{(q^{\tilde{\alpha}_{0}}u_{j})^{l_{j}}}{r_{j}^{l_{j}+n_{j}}} \bigg)$$
\indent Let $(v_{i})_{d} \in \{0,\ldots,\frac{\tilde{\alpha}}{\tilde{\alpha}_{0}}-1\}^{d}$,
$(u_{i})_{d} \in \{0,\ldots,q^{\tilde{\alpha}-\tilde{\alpha}_{0}}-1\}^{d}$, 
$(r_{i})_{d} \in  \{1,\ldots,q^{\tilde{\alpha}_{0}}-1\}^{d}$ 
such that, for all $i \in \{1,\ldots,d\}$ we have
$0<q^{\tilde{\alpha}_{0}v_{i}}(q^{\tilde{\alpha}_{0}}u_{i}+r_{i}) < q^{\tilde{\alpha}}$. Then, for all $i \in \{1, \ldots,d-1\}$,
\begin{equation} \label{eq:change of domain of summation} q^{\tilde{\alpha}_{0}v_{i}}(q^{\tilde{\alpha}_{0}}u_{i}+r_{i})  < q^{\tilde{\alpha}_{0}v_{i+1}}(q^{\tilde{\alpha}_{0}}u_{i+1}+r_{i+1})
\Leftrightarrow \left\{
\begin{array}{ll} u_{i} \leqslant q^{\tilde{\alpha}_{0}(v_{i+1}-v_{i}-1)} (q^{\tilde{\alpha}_{0}} u_{i+1} + r_{i+1})-1  &\text{if } v_{i}<v_{i+1}
\\ u_{i}\leqslant u_{i+1} 
& \text{if } v_{i}=v_{i+1}, r_{i}<r_{i+1}
\\ u_{i} \leqslant u_{i+1} - 1  &\text{if } v_{i}= v_{i+1},  r_{i}\geqslant r_{i+1}
\\ q^{\tilde{\alpha}_{0}(v_{i}-v_{i+1}-1)}(q^{\tilde{\alpha}_{0}} u_{i}+r_{i}) \leqslant u_{i+1} & \text{if } v_{i}>v_{i+1}
\end{array} \right. . 
\end{equation}
\end{proof}

In the expression of $\har_{q^{\tilde{\alpha}}}\big((n_{i})_{d};(\xi_{i})_{d+1}\big)$ of the lemma \ref{proposition starting}, we are going to sum over all the possible values of the parameters $u_{i}$ and $r_{i}$, in order to have an expression which depends only on the $v_{i}$'s. In view of that, let the numbers $\mathcal{B}_{m}^{l}(\xi) \in K$, for $l,m \in \mathbb{N}$ such that $0 \leqslant m \leqslant l+1$ and $\xi \in \mu_{N}(K)$, defined by the equation $\sum\limits_{n_{1}=0}^{n-1} \xi^{n_{1}}n_{1}^{l} = \xi^{n} \sum\limits_{m=0}^{l+1} \mathcal{B}_{m}^{l}(\xi) n^{m}$ for all $n \in \mathbb{N}$ (see the lemma 3.1.3 of \cite{I-1}). We denote by $\mathcal{B}_{m}^{l}=\mathcal{B}_{m}^{l}(1)$. For $l,m \in \mathbb{N}^{2}$ such that $1 \leqslant m \leqslant l+1$, we have $\mathcal{B}_{m}^{l} = \frac{1}{l+1} {l+1 \choose m} B_{l+1-m}$  where $B$ denotes Bernoulli numbers, and the others $\mathcal{B}_{m}^{l}$ are $0$. For $\xi \in \mu_{N}(K) - \{1\}$, $n,l \in \mathbb{N}^{\ast}$, we have $\mathcal{B}_{m}^{l}(\xi) \in \mathbb{Z}[\xi,\frac{1}{\xi},\frac{1}{\xi-1}]$,  and a formula for $\mathcal{B}_{m}^{l}(\xi)$ can be obtained by applying $\displaystyle\big( T \frac{d}{dT} \big)^{l}$ to the equation $\displaystyle\frac{T^{n}-1}{T-1} = \sum\limits_{n_{1}=0}^{n-1} T^{n_{1}}$, where $T$ is a formal variable.

\begin{Lemma} \label{elimination of ui and ri} Let $w= \big(
(n_{i})_{d};(\xi_{i})_{d+1} \big)$. We fix $(l_{i})_{d} \in \mathbb{N}^{d}$ and $(v_{i})_{d} \in \{0,\ldots,\frac{\tilde{\alpha}}{\tilde{\alpha}_{0}}-1\}$.
\newline\indent Let $R$ be the ring generated by $N$-th roots of unity and numbers $\frac{1}{1-\xi}$ where $\xi\not=1$ is a root of unity.
For any word $w'$ over $e_{0\cup\mu_{N}}$, there exists a polynomial $$ P_{w,w',(l_{i})_{d},(v_{i})_{d}} \in R[(Q_{j,j+1})_{1 \leqslant i \leqslant d-1},(B_{m,l,\xi})_{\substack{1\leqslant l \leqslant l_{1}+\ldots+l_{d}+d \\ 0 \leqslant m \leqslant l+1 \\ \xi \in \mu_{N}(K)}}]$$ 
with degree at most $l_{1}+\ldots+l_{d}+d$ in the variables $Q_{j,j+1}$, and with total degree at most $d$ in the variables $B_{m,l,\xi}$, which is non-zero for finitely many $w$'s, and such that we have
\begin{multline} \label{eq:5 2 1} \sum_{\substack{(u_{i})_{d} \in \mathbb{N}^{d}\\ (r_{i})_{d} \in \{1,\ldots,q^{\tilde{\alpha}_{0}} - 1 \}
\\ 0<q^{\tilde{\alpha}_{0}v_{1}}(q^{\tilde{\alpha}_{0}}u_{1}+r_{1}) < \ldots < q^{\tilde{\alpha}_{0}v_{d}}(q^{\tilde{\alpha}_{0}}u_{d}+r_{d}) < q^{\tilde{\alpha}}}}
\bigg( \prod_{j=1}^{d} \big(\frac{\xi_{j+1}}{\xi_{j}} \big)^{u_{j}+r_{j}}  \frac{(q^{\tilde{\alpha}_{0}})^{l_{i}+n_{i}}u_{i}^{l_{i}}}{r_{j}^{l_{j}+n_{j}}} \bigg)
\\ = \sum_{w'\text{ word on }e_{0\cup\mu_{N}}} P_{w,w',(l_{i})_{d},(v_{i})_{d}} \bigg((q^{\tilde{\alpha}_{0}(|v_{j+1}-v_{j}|-1)})_{1 \leqslant j \leqslant d},(\mathcal{B}_{m}^{l}(\xi))_{\substack{0 \leqslant m \leqslant \sum_{i=1}^{d}l_{i}+d+1 \\ \xi \in \mu_{N}(K)}} \bigg) \har_{q^{\tilde{\alpha_{0}}}}(w') .
\end{multline}
\end{Lemma}

\begin{proof} If $d=1$ we can apply the definition of the numbers $\mathcal{B}_{m}^{l}(\xi)$, $\xi \in \mu_{N}(K)$ mentioned above.
\newline\indent If $d>1$, let $i \in \{1,\ldots,d\}$ such that $v_{i} = \min (v_{1},\ldots,v_{d})$; 
we fix $u_{i-1},r_{i-1}$ and $u_{i+1},r_{i+1}$.  By (\ref{eq:change of domain of summation}), one has natural functions $f_{1},f_{2},f_{3},f_{4}$ such that we can write
\begin{multline*} \sum\limits_{ q^{\tilde{\alpha}_{0}v_{i-1}}(q^{\tilde{\alpha}_{0}}u_{i-1}+r_{i-1}) < q^{\tilde{\alpha}_{0}v_{i}}(q^{\tilde{\alpha}_{0}}u_{i}+r_{i}) < q^{\tilde{\alpha}_{0}v_{i+1}}(q^{\tilde{\alpha}_{0}}u_{i+1}+r_{i+1})} 
\\ = \sum\limits_{\substack{
f_{1}(u_{i-1},r_{i-1},u_{i+1},r_{i+1}) \leqslant u_{i} \leqslant f_{2}(u_{i-1},r_{i-1},u_{i+1},r_{i+1}) \\ r_{i}<r_{i+1} }} 
+ \sum\limits_{\substack{
f_{3}(u_{i-1},r_{i-1},u_{i+1},r_{i+1}) \leqslant u_{i} \leqslant f_{4}(u_{i-1},r_{i-1},u_{i+1},r_{i+1}) \\ r_{i} \geqslant r_{i+1} }}  .  
\end{multline*}
Using that equality we can apply the result in depth 1 i.e. the definitions of the numbers $\mathcal{B}_{m}^{l}(\xi)$ to express the sum 
$$ \sum\limits_{ q^{\tilde{\alpha}_{0}v_{i-1}}(q^{\tilde{\alpha}_{0}}u_{i-1}+r_{i-1}) < q^{\tilde{\alpha}_{0}v_{i}}(q^{\tilde{\alpha}_{0}}u_{i}+r_{i}) < q^{\tilde{\alpha}_{0}v_{i+1}}(q^{\tilde{\alpha}_{0}}u_{i+1}+r_{i+1})} \big(\frac{\xi_{i+1}}{\xi_{i}} \big)^{u_{i}+r_{i}}  \frac{(q^{\tilde{\alpha}_{0}})^{l_{i}+n_{i}}u_{i}^{l_{i}}}{r_{i}^{l_{i}+n_{i}}} . $$
This gives an expression for the left-hand side of (\ref{eq:5 2 1}) as a sum over $d-1$ variables $u_{i}$ and $d-1$ variables $r_{i}$, which is of a similar type. Continuing this procedure, we obtain an expression depending on the $r_{i}$'s via localized multiple harmonic sums, in the sense of \cite{I-2} \S3.2:
$\displaystyle  \sum\limits_{0<r_{1}<\ldots<r_{d}<q^{\tilde{\alpha}_{0}}} \frac{\big( \frac{\rho_{2}}{\rho_{1}} \big)^{r_{1}} \ldots \big(\frac{\rho_{d+1}}{\rho_{d}}\big)^{r_{d}} \big(\frac{1}{\rho_{d+1}}\big)^{q^{\tilde{\alpha}_{0}}}}{r_{1}^{\tilde{n}_{1}} \ldots r_{d}^{\tilde{n}_{d}}}$ with a positive integer $d$, for any $N$-th roots of unity $\rho_{i}$ ($1 \leqslant i \leqslant d+1$), $\tilde{n}_{i} \in \mathbb{Z}$ ($1 \leqslant i \leqslant d$) which are not necessarily positive. This can be expressed in terms of $q^{\tilde{\alpha}_{0}}$ and usual prime weighted multiple harmonic sums $\har_{q^{\tilde{\alpha}_{0}}}$ by the main result of \cite{I-2}, \S3.2. When we obtain products of numbers $\har_{q^{\tilde{\alpha_{0}}}}(w)$, we can linearize that expression by using the quasi-shuffle relation of \cite{Hoffman}.
\end{proof}

In the expression of  $\har_{q^{\tilde{\alpha}}}\big((n_{i})_{d};(\xi_{i})_{d+1}\big)$ obtained by combining lemma 3.1.1 and lemma 3.1.2, we are going to sum over all the possible values of the parameters $v_{i}$ in proposition \ref{elimination of ui and ri}. In view of that, we need another lemma. 

\begin{Lemma} \label{lemma in iteration of series} Let $d,M \in \mathbb{N}^{\ast}$; let $A_{1},\ldots,A_{d} \in R[T]$ be polynomials, with $R\subset \mathbb{Q}$, and $T_{1},\ldots,T_{d}$ formal variables. There exist coefficients $C_{\delta_{1},\ldots,\delta_{d}}(M) \in R[T]$ such that we have 
$$ \sum_{0\leqslant \tilde{v}_{1} <\ldots< \tilde{v}_{d} \leqslant M-1} \prod_{i=1}^{d} T_{i}^{\tilde{v}_{i}} A_{i}(\tilde{v}_{i})
= \sum_{\substack{
\forall i,\text{ } l_{1}+ \ldots + l_{i} 
\leqslant \deg (A_{1})+\ldots+ \deg A_{i}
\\ (U_{1},\ldots,U_{d}) \in \mathbb{Q}(T_{1},\ldots,T_{d})^{d} \text{s.t.} 
\\ U_{1} = T_{1} \\ \forall i \in \{1,\ldots,d-1\}, 
U_{i+1} \in \big\{ U_{i}T_{i+1} , -T_{i+1} \big\}
\\ \delta_{1} ,\ldots , \delta_{d} \geqslant 0 
}}
C_{\delta_{1},\ldots,\delta_{d}}(M) \prod_{i=1}^{d} \frac{U_{i}^{\delta_{i}}}{(U_{i}-1)^{\delta_{i}+1}} . $$
\end{Lemma}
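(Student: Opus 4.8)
The plan is to prove the identity by induction on $d$, evaluating the nested sum one index at a time, from the innermost variable $\tilde{v}_{1}$ outwards, the one-variable input being the closed formula for a finite geometric sum with polynomial weight. First I would reduce to monomial weights: writing $A_{i}(\tilde{v}_{i}) = \sum_{k_{i}\le\deg A_{i}} c_{i,k_{i}}\tilde{v}_{i}^{k_{i}}$ and using that the left-hand side is $\mathbb{Q}$-multilinear in $(A_{1},\dots,A_{d})$, it suffices to treat $A_{i}(\tilde{v}_{i}) = \tilde{v}_{i}^{k_{i}}$ with $0\le k_{i}\le\deg A_{i}$; the scalars $c_{i,k_{i}}$ are absorbed into the $C$'s at the end, which is exactly why the latter are allowed in $\mathbb{Z}[\text{coefficients of }A_{1},\dots,A_{d}][T]$.

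The one-variable ingredient is the identity already used in the proof of Proposition \ref{definition of el weighted}: for a formal variable $T$ and integers $0\le L\le N$,
\[
\sum_{v=L}^{N-1} v^{k}T^{v} \;=\; \Big(T\frac{d}{dT}\Big)^{k}\Big(\frac{T^{N}-T^{L}}{T-1}\Big).
\]
Using $\big(T\frac{d}{dT}\big)^{i}\frac{1}{T-1} = \frac{P_{i}(T)}{(T-1)^{i+1}}$ with $P_{i}\in\mathbb{Z}[T]$ of degree $\le i$, together with $\big(T\frac{d}{dT}\big)^{k}(T^{N}f) = T^{N}\big(N+T\frac{d}{dT}\big)^{k}f$, the right-hand side is an integer-coefficient combination of the building blocks $\frac{T^{\delta}}{(T-1)^{\delta+1}}$ with $0\le\delta\le k$ (the change of basis $\frac{T^{\delta}}{(T-1)^{\delta+1}} = \sum_{j}\binom{\delta}{j}\frac{1}{(T-1)^{\delta+1-j}}$ being triangular), the coefficient of $\frac{T^{\delta}}{(T-1)^{\delta+1}}$ having the form $T^{N}\cdot(\text{polynomial in }N) - T^{L}\cdot(\text{polynomial in }L)$ and, in addition, carrying a $\tilde{v}$-polynomial factor of degree at most $k-\delta$ whenever $N$ is one of the outer summation variables rather than the constant $M$.

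Then I would run the induction, maintaining after the summation of $\tilde{v}_{1},\dots,\tilde{v}_{i}$ an expression which is a combination of terms of the form $Q(\tilde{v}_{i+1})\,W^{\tilde{v}_{i+1}}\prod_{j=1}^{i}\frac{U_{j}^{\delta_{j}}}{(U_{j}-1)^{\delta_{j}+1}}$, where $U_{1}=T_{1}$, $U_{j+1}\in\{U_{j}T_{j+1},-T_{j+1}\}$, $W\in\{U_{i},1\}$ is the "running monomial", and $Q$ has degree at most $(\deg A_{1}+\dots+\deg A_{i})-(\delta_{1}+\dots+\delta_{i})$. At the next step one multiplies by $T_{i+1}^{\tilde{v}_{i+1}}A_{i+1}(\tilde{v}_{i+1})$ and applies the one-variable identity to $\sum_{\tilde{v}_{i+1}}(WT_{i+1})^{\tilde{v}_{i+1}}\tilde{v}_{i+1}^{\,m}$ with $m\le(\deg A_{1}+\dots+\deg A_{i+1})-(\delta_{1}+\dots+\delta_{i})$ and the appropriate constant lower limit: the $T^{N}$-boundary term (with $N=M$) keeps the invariant with $U_{i+1}=WT_{i+1}$, producing new blocks $\frac{U_{i+1}^{\delta_{i+1}}}{(U_{i+1}-1)^{\delta_{i+1}+1}}$ and lowering the residual polynomial degree by $\delta_{i+1}$, while the $-T^{L}$-boundary term (the source of the signs) restarts the running monomial as $-T_{i+1}$. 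This gives inductively $\delta_{i}\ge 0$ with $\delta_{1}+\dots+\delta_{i}\le\deg A_{1}+\dots+\deg A_{i}$ for all $i$, i.e. the index set of the statement (with $l_{i}:=\delta_{i}$). After $d$ steps the factor $W^{\tilde{v}_{d+1}}$ and the residual polynomial disappear, leaving a $\mathbb{Z}[\text{coefficients of }A_{\bullet}][T]$-linear combination of $\prod_{i}\frac{U_{i}^{\delta_{i}}}{(U_{i}-1)^{\delta_{i}+1}}$ whose $M$-dependence is confined to the boundary factors $U_{i}^{M}$ and to binomial expressions $\binom{m}{\delta}M^{m-\delta}$, all of which lie in $\mathbb{Z}[\text{coefficients of }A_{\bullet}][T]$ for each fixed $M$, as required.

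The step I expect to be the main obstacle is purely the combinatorial bookkeeping: at each of the $d$ stages the number of terms roughly doubles (the "merge" branch $W\mapsto WT_{i+1}$ versus the "restart" branch $W\mapsto -T_{i+1}$), and one must check that, after collecting like terms, the surviving data are indexed exactly by the recursion $U_{1}=T_{1}$, $U_{i+1}\in\{U_{i}T_{i+1},-T_{i+1}\}$ — keeping track of the signs and of the precise shape of the denominators under the normalization into blocks $\frac{U^{\delta}}{(U-1)^{\delta+1}}$ — and that the cumulative degree bound on the $\delta_{i}$ propagates. None of this is deep; a useful consistency check along the way is that both sides are in fact polynomials in $T_{1},\dots,T_{d}$ (the left-hand side manifestly, the right-hand side because the apparent poles at $U_{i}=1$ are removable, exactly as for $\frac{T^{M}-1}{T-1}$).
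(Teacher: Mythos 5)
Your proposal is correct and follows essentially the same strategy as the paper's proof: reduce to monomial $A_i$ by $\mathbb{Q}$-multilinearity, compute the one-variable case via $\big(T\frac{d}{dT}\big)^{\alpha}$ applied to a finite geometric series, and then induct on $d$ by summing one variable at a time, with the two boundary terms of each inner sum producing the "merge" branch $U_{i+1}=U_iT_{i+1}$ and the (signed) "restart" branch $U_{i+1}=T_{i+1}$. Your version is somewhat more explicit than the paper's (in particular the running monomial $W$, the identity $\big(T\frac{d}{dT}\big)^{k}(T^Nf)=T^N\big(N+T\frac{d}{dT}\big)^{k}f$, and the degree bookkeeping for the residual polynomial $Q$), but the argument is the same.
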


\begin{proof} (a) For any $n \in \mathbb{N}^{\ast}$, $m \in \mathbb{N}$, we have
$\big( T \frac{\partial}{\partial T} \big) \frac{T^{m}}{(T-1)^{n}} = m \frac{T^{m}}{(T-1)^{n}} - n \frac{T^{m+1}}{(T-1)^{n+1}}$. Thus, by induction on $\alpha$, for all $\alpha \in \mathbb{N}^{\ast}$, $\big( T \frac{\partial}{\partial T}\big)^{\alpha} \frac{T^{m}}{(T-1)^{n}} = \sum\limits_{l=0}^{\alpha} (-n)(-n-1) \ldots (-n-l+1) \frac{T^{m+l}}{(T-1)^{n+l}}$. Moreover we have 
$\sum\limits_{0 \leqslant \tilde{v} \leqslant W-1} T^{\tilde{v}} \tilde{v}^{\alpha} 
= 
\big( T \frac{\partial}{\partial T} \big)^{\alpha} \big( \sum\limits_{0 \leqslant \tilde{v} \leqslant W-1} T^{\tilde{v}} \big)
=
\big( T \frac{\partial}{\partial T} \big)^{\alpha} \big( \frac{T^{\tilde{v}}-1}{T-1} \big)$. Whence
$\sum\limits_{0\leqslant \tilde{v} \leqslant M-1} T^{v} v^{\alpha} 
= \sum\limits_{l=0}^{\alpha} (-1)^{l} l! \frac{ T^{l}}{(T-1)^{l+1}} (T^{M}-1)$.
This gives the result for $d=1$ by linearity with respect to $A_{1}$.
\newline\indent (b) Let us prove the result by induction on $d$: assume that $A_{1} = \sum\limits_{\alpha_{1}=0}^{\deg A_{1}} u_{\alpha_{1}} \tilde{v}^{\alpha_{1}}$ with $u_{\alpha_{1}} \in \mathbb{Q}$. We have, for all $\alpha_{1} \in \{0,\ldots, \deg A_{1} \}$: 
\begin{multline*}
\sum\limits_{0\leqslant \tilde{v}_{1} <\ldots< \tilde{v}_{d} \leqslant M-1} T_{1}^{\tilde{v}_{1}} \tilde{v}_{1}^{\alpha_{1}} \prod_{i=2}^{d} T_{i}^{\tilde{v}_{i}} A_{i}(\tilde{v}_{i}) =
\\ \sum\limits_{l=0}^{a_{1}} (-1)^{l} l! \frac{1}{(T_{1}-1)^{l+1}}
\sum\limits_{0\leqslant \tilde{v}_{2} <\ldots< \tilde{v}_{d} \leqslant M-1} \bigg( (T_{1}T_{2})^{\tilde{v}_{2}}\prod_{i=3}^{d} T_{i}^{\tilde{v}_{i}} A_{i}(\tilde{v}_{i}) - \prod_{i=2}^{d} T_{i}^{\tilde{v}_{i}} A_{i}(\tilde{v}_{i}) \bigg) .
\end{multline*} 
Whence the result by induction and by linearity.
\end{proof}

Combining lemmas 3.1.1, 3.1.2, 3.1.3, we can now sum over all the $u_{i}$'s, $r_{i}$'s and $v_{i}$'s and write an expression of  $\har_{q^{\tilde{\alpha}}}\big((n_{i})_{d};(\xi_{i})_{d+1}\big)$ as a function of $\tilde{\alpha}$ as we wanted.

\begin{Proposition} \label{elimination of vi}Let a harmonic word $w= \big((n_{i})_{d};(\xi_{i})_{d+1}\big)$. Let us fix $(l_{i})_{d} \in \mathbb{N}^{d}$.
\newline\indent Let $R$ be the ring of the proposition \ref{elimination of ui and ri}. For every word $w'$ over $e_{0\cup\mu_{N}}$, there exists a polynomial $P_{w,w',(l_{i})_{d}} \in  R\big[\tilde{Q},A,(B_{m,l,\xi})_{\substack{1\leqslant l \leqslant l_{1}+\ldots+l_{d}+d \\ 1 \leqslant m \leqslant l+1 \\ \xi \in \mu_{N}(K)}}\big]$ with degree at most $\sum\limits_{i=1}^{d} l_{i}+d$ in $\tilde{Q}$, and with total degree at most $d$ in the variables $B_{m,l,\xi}$, such that we have
\begin{multline} \label{eq:5 2 2}
\sum_{(v_{i})_{d} \in \{1,\ldots,q^{\tilde{\alpha}_{0}}-1\}}
P_{w,w',(l_{i})_{d},(v_{i})_{d}} \Big((q^{\tilde{\alpha}_{0}(|v_{j+1}-v_{j}|-1)})_{1 \leqslant j \leqslant d},(\mathcal{B}_{m}^{l}(\xi))_{\substack{0 \leqslant m \leqslant \sum_{i=1}^{d}l_{i}+d+1 \\ \xi \in \mu_{N}(K)}} \Big)
\\ =  P_{w,w',(l_{i})_{d}} \Big(q^{\tilde{\alpha}},\frac{\tilde{\alpha}}{\tilde{\alpha}_{0}}, (\mathcal{B}_{m}^{l}(\xi))_{\substack{0 \leqslant m \leqslant \sum_{i=1}^{d}l_{i}+d+1 \\ \xi \in \mu_{N}(K)}} \Big) .
\end{multline}
\end{Proposition}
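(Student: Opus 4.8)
The plan is to perform the summation over the tuples $(v_i)_d\in\{0,\dots,\frac{\tilde\alpha}{\tilde\alpha_{0}}-1\}^{d}$ by stratifying according to the \emph{order type} of $(v_i)_d$ and, on each stratum, recognising the resulting sum as an instance of Lemma \ref{lemma in iteration of series}. The first observation is that the polynomial $P_{w,w',(l_i)_d,(v_i)_d}$ produced by Proposition \ref{elimination of ui and ri} depends on $(v_i)_d$ in a controlled way: through the ordered set partition $\sigma$ of $\{1,\dots,d\}$ recording which coordinates of $(v_i)_d$ coincide and in what order the distinct ones occur --- this is exactly the data that dictates which of the four alternatives of (\ref{eq:change of domain of summation}) governs each consecutive pair during the elimination of the $u_i$'s and $r_i$'s, and hence fixes the shape of the polynomial --- and through the consecutive differences $v_{j+1}-v_{j}$, which enter only via the evaluation $Q_{j,j+1}=q^{\tilde\alpha_{0}(|v_{j+1}-v_{j}|-1)}$. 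Since there are only finitely many order types, I split the sum as $\sum_{\sigma}\sum_{(v_i)_d\text{ of type }\sigma}$, and for a fixed $\sigma$ each sign $\varepsilon_{j}\in\{-1,0,1\}$ of $v_{j+1}-v_{j}$ is constant.

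On the stratum of type $\sigma$ I remove the absolute values by writing $|v_{j+1}-v_{j}|=\varepsilon_{j}(v_{j+1}-v_{j})$, interpreted as $0$ when $\varepsilon_{j}=0$, so that $Q_{j,j+1}=q^{-\tilde\alpha_{0}}\,(q^{\tilde\alpha_{0}\varepsilon_{j}})^{v_{j+1}-v_{j}}$. Expanding $P$ in the $Q_{j,j+1}$'s and in the variables $\mathcal B_{m}^{l}(\xi)$, and using, as in the proof of Lemma \ref{proposition starting}, that $q\equiv 1\pmod N$ so that the root-of-unity prefactors carry no dependence on the $v_i$'s, one obtains that the contribution of the stratum is a finite $R[(\mathcal B_{m}^{l}(\xi))]$-linear combination of sums $\sum_{(v_i)_d\text{ of type }\sigma}\prod_{i=1}^{d}T_{i}^{v_{i}}A_{i}(v_{i})$, in which each $T_{i}$ is a monomial in $q^{\pm\tilde\alpha_{0}}$ with exponent bounded in terms of $l_{1}+\dots+l_{d}+d$ and each $A_{i}\in\mathbb Q[T]$ has bounded degree (one may in fact take $A_{i}=1$, since $P$ contains no explicit $v_i$).

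If $k\le d$ denotes the number of blocks of $\sigma$, the set of $(v_i)_d$ of type $\sigma$ is in bijection with $\{0\le\tilde v_{1}<\dots<\tilde v_{k}\le\frac{\tilde\alpha}{\tilde\alpha_{0}}-1\}$ through the surjection $\rho\colon\{1,\dots,d\}\to\{1,\dots,k\}$ determined by $\sigma$, under which $\prod_{i}T_{i}^{v_{i}}A_{i}(v_{i})=\prod_{j=1}^{k}\tilde T_{j}^{\tilde v_{j}}\tilde A_{j}(\tilde v_{j})$ with $\tilde T_{j}=\prod_{\rho(i)=j}T_{i}$ and $\tilde A_{j}=\prod_{\rho(i)=j}A_{i}$. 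Applying Lemma \ref{lemma in iteration of series} with $d$ replaced by $k$ and $M=\frac{\tilde\alpha}{\tilde\alpha_{0}}$ then evaluates each such sum as a polynomial in $M=\frac{\tilde\alpha}{\tilde\alpha_{0}}$ and in the quantities $U_{j}^{M}$, which are powers of $q^{\tilde\alpha}$, multiplied by rational expressions $\prod_{j}(U_{j}-1)^{-(\delta_{j}+1)}$ in $q^{\tilde\alpha_{0}}$. Summing the finitely many contributions over $\sigma$ and over $w'$ and collecting produces the polynomial $P_{w,w',(l_i)_d}$ and the identity (\ref{eq:5 2 2}); the bound $\le l_{1}+\dots+l_{d}+d$ for the degree in $q^{\tilde\alpha}$ holds because the exponents occurring never exceed the degree of $P$ in the $Q_{j,j+1}$'s, and the total degree $\le d$ in the $\mathcal B_{m}^{l}(\xi)$ is not increased by any of these steps.

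The conceptual part is the stratification; the delicate part is the bookkeeping of the last step. One has to check that the apparent denominators $(U_{j}-1)^{\delta_{j}+1}$ --- the rational-in-$q^{\tilde\alpha_{0}}$ factors coming out of Lemma \ref{lemma in iteration of series} --- combine, once all strata and all words $w'$ are summed, into an element of the ring $R$ (extended by the $\mathcal B_{m}^{l}(\xi)$) rather than of a proper localisation of it; this is the arithmetic shadow of the fact that $\har_{q^{\tilde\alpha}}$ itself has no such spurious denominators, and it requires keeping a precise account, through the stratification, of which powers of $q^{\tilde\alpha_{0}}$ arise as the $T_{i}$ and hence as the $U_{j}$. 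By contrast, the invocation of Lemma \ref{lemma in iteration of series} and the reindexing by $\rho$ are mechanical, as is the verification of the degree bounds.
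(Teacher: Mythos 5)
Your proposal follows essentially the same route as the paper's own proof: stratify the domain $\{0,\ldots,\frac{\tilde\alpha}{\tilde\alpha_0}-1\}^{d}$ by the order type of $(v_{i})_{d}$ (the paper indexes these strata by pairs $(E,\sigma)$ with $E$ a set partition and $\sigma$ a permutation of its blocks), observe that the abstract polynomial $P_{w,w',(l_{i})_{d},(v_{i})_{d}}$ from Proposition \ref{elimination of ui and ri} is constant on each stratum because $\xi^{q^{\tilde\alpha_{0}v}}=\xi$ for all $\xi\in\mu_{N}(K)$ and $v\in\mathbb N$, reduce by multilinearity to a monomial in the $Q_{j,j+1}$'s, rewrite the stratum contribution as a sum of products $\prod_{i}T_{i}^{v_{i}}$, reindex by the sorted distinct values, and invoke Lemma \ref{lemma in iteration of series}. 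Your bookkeeping of the reindexing via the surjection $\rho$ onto the blocks of $\sigma$ is a cleaner presentation of what the paper phrases via the differences $\tilde v_{i}=v_{\sigma(i+1)}-v_{\sigma(i)}-1$, but the content is the same.

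The "delicate" point you raise in your final paragraph --- that the rational factors $\prod_{j}(U_{j}-1)^{-(\delta_{j}+1)}$ produced by Lemma \ref{lemma in iteration of series} should conspire to cancel so that everything lands in $R$ --- is not a gap to be filled but a misreading of the intended statement, which is imprecisely phrased. No such cancellation occurs, nor is it needed. The $d=1$ Example at the end of \S3.3 exhibits a coefficient $\dfrac{p^{\alpha(n+b)}-1}{p^{\alpha_{0}(n+b)}-1}\,\mathcal B^{L+b}_{b}$ in front of $\har_{p^{\alpha_{0}}}(n+b+L)$, with a visible denominator $p^{\alpha_{0}(n+b)}-1$; and Definition \ref{def iteration of the harmonic Frobenius of series} places the coefficients in $K[\Lambda,\Lambda^{a},a]$, where $\Lambda$ tracks the \emph{fixed} quantity $q^{\tilde\alpha_{0}}$, so the coefficient ring may freely invert $q^{\tilde\alpha_{0}n}-1$ for each $n\geqslant 1$. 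The Proposition's display "$P_{w,w',(l_{i})_{d}}=R\big[\tilde Q,\ldots\big]$" is a typographical slip (one cannot equate a polynomial with a ring), and the correct reading extends $R$ by $q^{\tilde\alpha_{0}}$ together with the reciprocals $\frac{1}{q^{\tilde\alpha_{0}n}-1}$. Your heuristic that "$\har_{q^{\tilde\alpha}}$ has no such spurious denominators" does not force a cancellation: equation (\ref{eq:third of I-3}) expresses $\har_{q^{\tilde\alpha}}(w)$ as a combination of $\har_{q^{\tilde\alpha_{0}}}(w')$'s, which is a relation, not a unique expansion in a basis, so no constraint on the coefficients follows. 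Dropping that concern, your argument coincides with the paper's.
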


\begin{proof} The set $\{0,\ldots,\frac{\tilde{\alpha}}{\tilde{\alpha}_{0}}-1\}^{d}$ admits a partition, which depends only on $d$, indexed by the set of couples $(E,\sigma)$, where $E$ is a partition of $\{1,\ldots,d\}$ and $\sigma$ is a permutation of $\{1,\ldots,\sharp E\}$, defined as follows: for each $(v_{1},\ldots,v_{d}) \in [0,k-1]^{d}$, and each such $(E,\sigma)$, we say that
$(v_{1},\ldots,v_{d}) \in (E,\sigma)$ if and only if, for all, $i,i',a$: $\left\{
\begin{array}{ll}
v_{i} = v_{i'} \text{ for } i,i' \in P_{\sigma(a)}
\\ v_{i} < v_{i'} \text{ for } i \in P_{\sigma(a)}, i' \in P_{\sigma(a+1)}
\end{array} \right.$. By the proof of proposition \ref{elimination of ui and ri}, the function $(v_{i})_{d} \mapsto P_{w,w',(l_{i})_{d},(v_{i})_{d}}$ is constant on each term of that partition (since $\xi^{q} = \xi$ for all $\xi \in \mu_{N}(K)$, we have $\xi^{q^{\tilde{\alpha}_{0}v}}=\xi$ for all $v \in \mathbb{N}^{\ast}$). We split the left-hand side of (\ref{eq:5 2 2}) as $\sum\limits_{(v_{i})_{d} \in \{1,\ldots,\frac{\tilde{\alpha}}{\tilde{\alpha}_{0}}-1\}} = \sum\limits_{(E,\sigma)} \sum\limits_{(v_{i})_{d} \in (E,\sigma)}$ and we compute each subsum $\sum\limits_{(v_{i})_{d} \in (E,\sigma)}$. By multilinearity we can assume that $P_{w,w',(l_{i})_{d},(v_{i})_{d}}$ is a monomial in the $q^{\tilde{\alpha}_{0}(|v_{j+1}-v_{j}-1|)}$'s. Thus the subsum is a function of the type
\begin{multline}
\label{eq: 5 2 3}
\sum_{0 \leqslant \tilde{v}_{1} < \ldots \tilde{v}_{d'} \leqslant M-1}
T_{i_{1}}^{\tilde{v}_{1}}\ldots T_{i_{r}}^{\tilde{v}_{r}} ,
\end{multline}
applied to $\tilde{v}_{i} = v_{\sigma(i+1)}-v_{\sigma(i)}-1$ and  $T_{i}=q^{\tilde{\alpha}_{0}}$, where $d',M \in \mathbb{N}^{\ast}$, $I = \{i_{1},\ldots,i_{r} \} \subset \{1,\ldots,d'\}$ with $i_{1} < \ldots < i_{r}$, and $T_{i_{1}},\ldots,T_{i_{r}}$ formal variables. Moreover, $\sum\limits_{i\leqslant i+1<\ldots<j} 1$ is a polynomial function of $(i,j)$ with coefficients in $\mathbb{Z}$. Thus we can express (\ref{eq: 5 2 3}) by lemma \ref{lemma in iteration of series}. This provides the result.
\end{proof}

\subsection{The relation of iteration of the harmonic Frobenius of series}

Using the result of \S3.1 we can now formalize the iteration of the harmonic Frobenius from the point of view of series. We refer to
$K\langle\langle e_{0\cup\mu_{N}}\rangle\rangle_{\har}^{\Sigma} = \prod\limits_{\text{w harmonic word}} K.w$ defined in \cite{I-2}, \S3.

\begin{Definition} \label{def iteration of the harmonic Frobenius of series} Let the map of iteration of the harmonic Frobenius of series be the map 
$\iter_{\har}^{\Sigma}(\Lambda,\Lambda^{a},a): K\langle\langle e_{0\cup\mu_{N}}\rangle\rangle_{\har}^{\Sigma} \rightarrow K [\Lambda,\Lambda^{a},a] \langle\langle e_{0\cup \mu_{N}} \rangle\rangle_{\har}^{\Sigma}$ defined by, for any word $w$,  
$$ \iter_{\har}^{\Sigma}(f)[w] = \sum\limits_{w'\text{ word on }e_{0\cup\mu_{N}}} P_{w,w',(l_{i})_{d}} \Big(\Lambda^{a},a,(\mathcal{B}_{m}^{l}(\xi))_{\substack{0 \leqslant m \leqslant \sum_{i=1}^{d}l_{i}+d+1 \\ \xi \in \mu_{N}(K)}} \Big) f[w'] . $$
\end{Definition}

Let us now finish the proof of equation (\ref{eq:third of I-3}).
By lemma \ref{proposition starting}, proposition \ref{elimination of ui and ri} and proposition \ref{elimination of vi}, the only thing to check is the convergence of the series, which are infinite sums over $(l_{i})_{d} \in \mathbb{N}^{d}$. This follows from the following facts:
\newline\indent (a) For any $n \in \mathbb{N}^{\ast}$, it follows from $p^{v_{p}(n)}\leqslant n$ that $v_{p}(\frac{1}{n}) \geqslant -\frac{\log(n)}{\log(p)}$; moreover, for any $n \in \mathbb{N}^{\ast}$, we have $v_{p}(B_{n}) \geqslant -1$ (this is part of von Staudt-Clausen's theorem). Thus for all $l,m$, we have $v_{p}(\mathcal{B}_{m}^{l}) \geqslant - 1 - \frac{\log(l+1)}{\log(p)}$, and, given that $|\xi|_{p} = |1-\xi|_{p}=1$ for all $\xi \in \mu_{N}(K) - \{1\}$, we have, for all $l,m$, and $\xi \in \mu_{N}(K)- \{1\}$, $v_{p}(\mathcal{B}_{m}^{l}(\xi)) \geqslant 0$
\newline\indent (b) If $T_{1},T_{2}$ are formal variables and $m \in \mathbb{N}^{\ast}$, we have $\displaystyle \frac{T_{1}^{m}-1}{T_{1}-1} - \frac{T_{2}^{m}-1}{T_{2}-1} = \frac{T_{1}^{m}-T_{1}}{T_{1}-1} - \frac{T_{2}^{m}-T_{2}}{T_{2}-1}$.
\newline\indent (c) For any $z \in K$ such that $v_{p}(z) \not= 0$, we have $v_{p}(\frac{1}{z-1}) > 0$ if $v_{p}(z)> 0$, and $v_{p}(\frac{1}{z^{-1}-1}) > v_{p}(z^{-1})$ if $v_{p}(z) < 0$.

\begin{Remark} The equation (\ref{eq:third of I-3}) is related to the formula $\har_{p^{\alpha}\mathbb{N}} = \har_{p^{\alpha}} \circ_{\har}^{\Sigma} \har_{\mathbb{N}}^{(p^{\alpha})}$ proved in \cite{I-2}, where $\circ_{\har}^{\Sigma}$ is the pro-unipotent harmonic action of series introduced in \cite{I-2}, \S4.3: restricting that equation to $\har_{m}$ with $m$ a power of $p$ gives a functional equation satisfied by the map $\alpha \mapsto \har_{p^{\alpha}}$, which expresses
$\har_{p^{\alpha+\beta}}$ in terms of $\har_{p^{\alpha}}$ and $\har^{(p^{\alpha})}_{p^{\beta}}$.
\end{Remark}

\begin{Remark} As in \cite{I-2}, the computation which leads to the above result remains true for the generalization of cyclotomic multiple harmonic sums obtained by replacing the factors $\displaystyle\frac{1}{m_{i}^{n_{i}}}$, $1\leqslant i \leqslant d$ in (\ref{eq:harmonic sum}) by, more generally, factors $\chi_{i}(m_{i})$ where $\chi_{i}$ are locally analytic group endomorphisms of the multiplicative group $K^{\ast}$, which are analytic on disks of radius $p^{-\alpha}$.
\end{Remark}

\begin{Remark} The main theorem gives formulas for $p$-adic cyclotomic multiple zeta values which depend on an additional parameter, a number of iterations of the Frobenius different from the one under consideration. Here is another way to obtain formulas with parameters. The computation of regularized iterated integrals in \cite{I-1}, \S3 can be done by replacing the Euclidean division by $p^{\alpha}$ in $\mathbb{N}$ by the Euclidean division by $p^{\beta}$ with $\beta\geq \alpha$. This gives, for example, $\zeta_{p,\alpha}(n)
= \displaystyle \frac{p^{\alpha n}}{n-1} \lim_{|m|_{p} \rightarrow 0} 
\frac{1}{p^{\beta}m} \sum_{\substack{0<m_{1}<p^{\beta}m \\ p^{\alpha} \nmid m}} \frac{1}{m_{1}^{n}}$, and this gives formulas in which the prime weighted multiple harmonic sums are replaced by the following generalization, where $(l_{1},\ldots,l_{d}) \in \mathbb{N}^{d}$, $I,I' \subset \{1,\ldots,d\}$ and $\beta$: 
$$ p^{\alpha\sum_{i=1}^{d} n_{i}+\beta\sum_{i=1}^{d}l_{i}}
\sum_{\substack{0=m_{0}<m_{1}<\ldots<m_{d}<p^{\beta}
\\ \text{ for }j \in I, m_{j-1} \equiv m_{j} [p^{\alpha}]
\\  \text{ for }j \in I', m_{j} \equiv 0 [p^{\alpha}] }}                \frac{(\frac{\xi_{2}}{\xi_{1}})^{m_{1}}\ldots (\frac{\xi_{d+1}}{\xi_{d}})^{m_{d}} (\frac{1}{\xi_{d+1}})^{p^{\beta}}}{m_{1}^{n_{1}+l_{1}}\ldots m_{d}^{n_{d}+l_{d}}} . $$
\end{Remark}

\begin{Example} Let us consider the case of $\mathbb{P}^{1} - \{0,1,\infty\}$ $(N=1)$, for which we have $q=p$, $\tilde{\alpha}=\alpha$, $\tilde{\alpha}_{0}=\alpha_{0}$, and depth one and two. Equation (\ref{eq:third of I-3}) is, for all $n \in \mathbb{N}^{\ast}$,
$$ \har_{p^{\alpha}}(n) = 
-\sum\limits_{l\geqslant 0} {-n \choose l} \har_{p^{\alpha_{0}}}(l+n) \sum\limits_{u=1}^{l+1} \mathcal{B}_{u}^{l}  \frac{1}{1-p^{\alpha_{0}(u+n)}} - 
\sum\limits_{u\geqslant n+1} p^{\alpha u} \frac{1}{1-p^{\alpha_{0}u}} \sum\limits_{l\geqslant u-n-1} {-n \choose l} \har_{p^{\alpha_{0}}}(l+n) \mathcal{B}_{u-n}^{l} . $$
\indent For all $n_{1},n_{2} \in \mathbb{N}^{\ast}$, $\har_{p^{\alpha}}(n_{1},n_{2})$ is the sum of the following terms, where the variables
$v_{1},v_{2}$ are those defined in lemma \ref{proposition starting} and where, for a set $E$, $1_{E}$ means the characteristic function of $E$:
\newline\indent $\bullet$ the term "$v_{1}=v_{2}$":
$\sum\limits_{\substack{u\geqslant 1 \\ l_{1},l_{2} \geqslant 0 \\ l_{1}+l_{2} \geqslant u-1}} \frac{p^{\alpha(u+n_{1}+n_{2})}-1}{p^{\alpha_{0}(u+n_{1}+n_{2})}-1} \prod_{i=1}^{2} {-n_{i} \choose l_{i}}
\bigg( \mathcal{B}_{u}^{l_{1},l_{2}} \prod_{i=1}^{2}\har_{p^{\alpha_{0}}}(n_{i}+l_{i}) + \mathcal{B}_{u}^{l_{1}+l_{2}} \har_{p^{\alpha_{0}}}(n_{1}+l_{1},n_{2}+l_{2}) \bigg);
$
\newline 
\indent $\bullet$ the term "$v_{1}<v_{2}$":
$\sum\limits_{\substack{M_{1}, M_{2} \geqslant -1\\ u,t\geqslant 1}} \bigg( 
\frac{1_{t \not= u+n_{2}}}{p^{\alpha_{0}(n_{2}+u-t)}-1} \big( \frac{ p^{\alpha(n_{1}+n_{2}+u)}-p^{\alpha_{0}(n_{1}+n_{2}+u)}}{p^{\alpha_{0}(n_{2}+n_{1}+u)}-1} - \frac{ p^{\alpha(n_{1}+t)}-p^{\alpha_{0}(n_{1}+t)}}{p^{\alpha_{0}(n_{1}+t)}-1} \big)
\\ + 1_{t=u+n_{2}}\big( \frac{\alpha(p^{\alpha(n_{1}+n_{2}+u)} }{p^{\alpha_{0}(n_{1}+n_{2}+u)}-1} + \frac{1-p^{\alpha(n_{1}+n_{2}+u)}}{(p^{\alpha_{0}(n_{1}+n_{2}+u)}-1)^{2}} \big) \bigg) \times 
\bigg( \mathcal{B}_{t}^{M_{1}+t} \mathcal{B}_{u}^{M_{2}+u}
\newline \sum_{j=0}^{\min(t,M_{2}+u)} {t \choose j} {-n_{1} \choose M_{1}+t} {-n_{2} \choose M_{2}+u-j} \har_{p^{\alpha_{0}}}(n_{1}+M_{1}+t)\har_{p^{\alpha_{0}}}(n_{2}+M_{2}+u-t)\bigg);$
\newline\indent $\bullet$ the term "$v_{1}>v_{2}$": by the change of variable $(m_{1},m_{2}) \mapsto (p^{\alpha}-m_{1},p^{\alpha}-m_{2})$, it is 
$\sum\limits_{\substack{0<m_{1}<m_{2}<p^{\alpha} \\ v_{p}(n_{1})>v_{p}(n_{2})}} \frac{(p^{\alpha})^{n_{1}+n_{2}}}{n_{1}^{n_{1}}n_{2}^{n_{2}}}
= \sum\limits_{\substack{l_{1},l_{2} \geqslant 0 \\ 0<n_{1}<n_{2}<p^{\alpha} \\ v_{p}(n_{1})<v_{p}(n_{2})}} {-n_{1} \choose l_{1}}{-n_{2} \choose l_{2}} \frac{(p^{\alpha})^{l_{1}+l_{2}+n_{1}+n_{2}}}{n_{1}^{n_{1}}n_{2}^{n_{2}}} .$
\end{Example}

\subsection{Interpretation in terms of cyclotomic multiple harmonic sums viewed as functions of the upper bound of their domain of summation}

The main result above gives a description of $\har_{q^{\tilde{\alpha}}}$ as a function of $q^{\tilde{\alpha}}$ (and $\tilde{\alpha}$) regarded as a $p$-adic integer. Let us extend the question and consider the study of $\har_{m}$ as a function of $m$, for any $m \in \mathbb{N}^{\ast}$ regarded as a $p$-adic integer. We are going to remove the factor $m^{\weight}$ in $\har_{m}$, i.e. consider the (non-weighted) cyclotomic multiple harmonic sums $\frak{h}_{m} \big((n_{i})_{d};(\xi_{i})_{d+1}\big)$ of equation (\ref{eq:harmonic sum}).

Intuitively, $\frak{h}_{m}$ is a highly discontinuous function of $m$, but we have proved by the main theorem that $\har_{q^{\tilde{\alpha}}}$ has a power series expansion in terms of $q^{\tilde{\alpha}}$. The goal of the next proposition is to write a decomposition of $\frak{h}_{m}$ in a way which explains the relation between these two phenomena, by using the $q$-adic expansion of $m$, in order to clarify the dependence of $\har_{m}$ in $m$.

Below we use the following definition: an increasing connected partition of a subset of $\mathbb{N}$ is a partition of that set into sets $J_{i}$ of consecutive integers, such that each element of $J_{i}$ is less than each element of $J_{i'}$ when $i<i'$.

\begin{Proposition} \label{n variable 1}
	(i) Let $m \in \mathbb{N}^{\ast}$, and let its $q$-adic expansion be $m = a_{y_{d'}}q^{y_{d'}} + a_{y_{d'-1}}q^{y_{d'-1}} + \ldots + a_{y_{1}}q^{y_{1}}$, with $y_{d'}> \ldots > y_{1}$, and $a_{y_{d'}},\ldots,a_{y_{1}} \in \{1, \ldots,q-1\}$. Let $\nu_{j'} = 
	a_{y_{d'}}q^{y_{d'}} + \ldots + a_{y_{d'-j'+1}}q^{y_{d'-j'+1}}$ for $j' \in \{1,\ldots,d'\}$. We have
\begin{multline} \frak{h}_{m} \big((n_{i})_{d};(\xi_{i})_{d+1} \big) = \sum_{\substack{\frak{n} = \{\nu_{j''_{1}},\ldots,\nu_{j''_{d''}}\} \subset \{\nu_{1},\ldots,\nu_{d'}\}
\\ \jmath: \frak{n} \hookrightarrow \{1,\ldots,d\}\text{ injective }
\\ J_{0} \amalg \ldots \amalg J_{d'} = \{1,\ldots,d\} - \jmath(\frak{n}) \text{, satisfying }(\ast)}}
\prod_{j''=1}^{d''} \frac{1}{\nu_{j''}^{n_{\jmath(\nu_{j''})}}} \prod_{j'=0}^{d'}
\sum_{l_{j_{j'}^{\max}},\ldots,l_{j_{j'}^{\min}}\geqslant 0}
\\
\bigg( \prod_{u=j_{j'}^{\min}}^{j_{j'}^{\max}} { -n_{u} \choose l_{u}} \bigg) \big( \sum_{l=d'}^{d'-j'+1}a_{y_{l}}q^{y_{l}} \big)^{\sum_{u=j_{j'}^{\min}}^{j_{j'}^{\max}} l_{u}} \frak{h}_{a_{y_{d'-j'}}q^{y_{d'-j'}}}
\big( (n_{j}+l_{j})_{j_{j',min}<j<j_{j',max}};(\xi)_{j_{\min}< j< j_{\max}+1}\big) ,
\end{multline}
where $(\ast)$ is that 
$J_{0}\text{ }\amalg \text{ }\ldots \text{ }\amalg\text{ }J_{d'}$ is an increasing connected partition of $\{1,\ldots,d\} - \jmath(\frak{n})$, such that each 
$J_{\jmath(\nu_{j''})}\text{ }\amalg\text{ }\ldots\text{ }\amalg\text{ }J_{\jmath(\nu_{j''+1})-1}$, $j''=1,\ldots,d''$, is an increasing connected partition of
$(\{1,\ldots,d\} - \jmath(\frak{n})) \text{ }\cap\text{ } ]\jmath(\nu_{j''}),\jmath(\nu_{j''+1})[$.
\newline\indent (ii) Let $n \in \mathbb{N}^{\ast}$, whose decomposition in base $q$ is of the form $a q^{y}$, with $a \in \{1,\ldots,q-1\}$ and $y \in \mathbb{N}^{\ast}$. Let $\nu_{j'} = j'q^{y}$ for $j' \in \{1,\ldots,a-1\}$. We have
\begin{multline} \frak{h}_{a q^{y}} \big((n_{i})_{d};(\xi_{i})_{d+1} \big) = \sum_{\substack{\frak{n} = \{\nu_{j''_{1}},\ldots,\nu_{j''_{d''}}\} \subset \{\nu_{1},\ldots,\nu_{d'}\}
\\ \jmath: \frak{n} \hookrightarrow \{1,\ldots,d\}\text{ injective }
\\ J_{0} \amalg \ldots \amalg J_{d'} = \{1,\ldots,d\} - \jmath(\frak{n}) \text{, satisfying }(\ast)}}
\prod_{j''=1}^{d''} \frac{1}{\nu_{j''}^{n_{\jmath(\nu_{j''})}}} \prod_{j'=0}^{d'}
\sum_{l_{j_{j'}^{\max}},\ldots,l_{j_{j'}^{\min}}\geqslant 0}
\\ \bigg( \prod_{u=j_{j'}^{\min}}^{j_{j'}^{\max}} { -n_{u} \choose l_{u}} \bigg) \big( j'q^{y} \big)^{\sum_{u=j_{j'}^{\min}}^{j_{j'}^{\max}} l_{u}} \frak{h}_{q^{y_{d'-j'}}}
\big((n_{j_{j'}} + l_{j_{j'}})_{j_{\min}<j'<j_{\max}};  (\xi_{j_{j'}})_{j_{\min}<j'<j_{\max}+1}\big) .
\end{multline}
\end{Proposition}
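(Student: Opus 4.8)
The plan is to expand $\frak{h}_{m}$ from its definition (\ref{eq:non weighted harmonic sum}) and to cut the simplex of summation $\{0<m_{1}<\ldots<m_{d}<m\}$ into cells according to the $q$-adic block structure of $m$; this is the analogue, for an arbitrary upper bound $m$ in place of a power $q^{\tilde{\alpha}}$, of the decomposition used in Lemma \ref{proposition starting}. Writing $\nu_{0}=0$, so that $0=\nu_{0}<\nu_{1}<\ldots<\nu_{d'}=m$ and the block $[\nu_{j'},\nu_{j'+1})$ has length $\nu_{j'+1}-\nu_{j'}=a_{y_{d'-j'}}q^{y_{d'-j'}}$, I observe that for any $(m_{1},\ldots,m_{d})$ in the simplex each index $j$ either satisfies $m_{j}=\nu_{j''}$ for a unique $j''\in\{1,\ldots,d'-1\}$ (a corner index), or lies strictly inside a unique block, $\nu_{j'}<m_{j}<\nu_{j'+1}$. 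Sorting the corner indices yields a subset $\frak{n}\subset\{\nu_{1},\ldots,\nu_{d'}\}$ and an injection $\jmath:\frak{n}\hookrightarrow\{1,\ldots,d\}$; grouping the remaining indices by the block in which they sit (keeping them increasing) yields the ordered partition $J_{0}\amalg\ldots\amalg J_{d'}$, and the monotonicity of $(m_{j})_{j}$ together with the compatibility of each corner with the blocks surrounding it is exactly the condition $(\ast)$. Conversely each datum $(\frak{n},\jmath,(J_{j'})_{j'})$ satisfying $(\ast)$ is one cell of a partition of the simplex, so $\frak{h}_{m}$ is the sum over such data of the summand restricted to the corresponding cell.

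On a fixed cell I would evaluate the summand. For a corner index $j=\jmath(\nu_{j''})$ the factor $m_{j}^{-n_{j}}$ is $\nu_{j''}^{-n_{j}}$, producing the factor $\prod_{j''}\nu_{j''}^{-n_{\jmath(\nu_{j''})}}$. For an interior index $j$ in block $j'$, write $m_{j}=\nu_{j'}+m_{j}'$ with $0<m_{j}'<a_{y_{d'-j'}}q^{y_{d'-j'}}$; since $\nu_{j'}$ is divisible by $q^{y_{d'-j'+1}}$ while $m_{j}'<q^{y_{d'-j'}+1}\leqslant q^{y_{d'-j'+1}}$, one has $v_{p}(\nu_{j'})\geqslant\frak{o}(y_{d'-j'}+1)>v_{p}(m_{j}')$, hence the binomial series
$$\frac{1}{m_{j}^{n_{j}}}=\frac{1}{(m_{j}')^{n_{j}}}\Big(1+\frac{\nu_{j'}}{m_{j}'}\Big)^{-n_{j}}=\sum_{l_{j}\geqslant 0}\binom{-n_{j}}{l_{j}}\frac{\nu_{j'}^{\,l_{j}}}{(m_{j}')^{n_{j}+l_{j}}}$$
converges in $K$. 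The root-of-unity factors reorganize using $\xi^{q}=\xi$ for $\xi\in\mu_{N}(K)$ (valid since $q\equiv 1\pmod N$): each $(\xi_{j+1}/\xi_{j})^{m_{j}}$ splits into a factor depending only on the corners and the block and a factor $(\xi_{j+1}/\xi_{j})^{m_{j}'}$, and I would make this bookkeeping explicit. Summing over the interior indices of a fixed block, with the monotonicity internal to that block, reassembles a harmonic sum $\frak{h}_{a_{y_{d'-j'}}q^{y_{d'-j'}}}$ with each exponent $n_{j}$ replaced by $n_{j}+l_{j}$, multiplied by $\prod_{u}\binom{-n_{u}}{l_{u}}$ and by $\nu_{j'}^{\,\sum_{u}l_{u}}$; taking the product over the blocks and collecting the corner factors gives the stated identity.

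It remains to justify that the infinite sums over the $l_{j}$ converge and may be interchanged with the finite sum over cells. In block $j'$ one has $v_{p}(\nu_{j'}^{\,\sum_{j}l_{j}})\geqslant\frak{o}(y_{d'-j'}+1)\sum_{j}l_{j}$, while every term of $\frak{h}_{a_{y_{d'-j'}}q^{y_{d'-j'}}}$ with exponents $n_{j}+l_{j}$ has $v_{p}\geqslant-(\frak{o}(y_{d'-j'}+1)-1)\sum_{j}(n_{j}+l_{j})$, because the indices there are $<q^{y_{d'-j'}+1}$ and the roots of unity are $p$-adic units; together with $|\binom{-n_{j}}{l_{j}}|_{p}\leqslant 1$, the total valuation is $\geqslant\sum_{j}l_{j}-(\frak{o}(y_{d'-j'}+1)-1)\sum_{j}n_{j}\to+\infty$ as $\sum_{j}l_{j}\to\infty$. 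This gives convergence and legitimizes the rearrangement. Part (ii) follows from the same argument applied to $m=aq^{y}$, using instead the decomposition of $\{1,\ldots,aq^{y}-1\}$ into the $a$ blocks $[j'q^{y},(j'+1)q^{y})$ ($0\leqslant j'\leqslant a-1$) with corners $\nu_{j'}=j'q^{y}$: every block has length $q^{y}$, so the interior block sums become copies of $\frak{h}_{q^{y}}$, and the inequality $v_{q}(\nu_{j'})=y>v_{q}(m_{j}')$ keeps all the valuation estimates valid.

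The step I expect to be the main obstacle is the combinatorial bookkeeping of the first paragraph: checking that the data $(\frak{n},\jmath,(J_{j'})_{j'})$ subject to $(\ast)$ parametrize the cells of the partition of the simplex without repetition, and matching the naive description ``$(m_{j})_{j}$ increasing, corners consistent with blocks'' with the precise nested-increasing-connected-partition form of $(\ast)$, together with keeping track of which part of each root-of-unity factor and each power of $q$ ends up in the prefactor versus inside the inner harmonic sums. Once this is set up, the algebraic manipulation and the convergence check are routine given the $q$-adic valuation estimates.
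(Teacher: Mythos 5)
Your proof follows essentially the same route as the paper's: decompose the summation simplex by the $q$-adic block structure of $m$ into corners $\nu_{j''}$ and interior blocks (this is the ``formula of splitting'' of multiple harmonic sums that the paper cites from \cite{I-2}~\S3), then on each block use the binomial series $(\nu_{j'}+m_{j}')^{-n_{j}}=\sum_{l_{j}\geqslant 0}\binom{-n_{j}}{l_{j}}\nu_{j'}^{l_{j}}(m_{j}')^{-n_{j}-l_{j}}$ to shift the summand (the paper's ``$p$-adic formula of shifting'' cited from \cite{I-2}~\S4.1). You re-derive the content of these two cited lemmas in-line rather than invoking them, with correct $p$-adic valuation estimates for convergence, so the argument is sound and matches the paper's.
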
 

\begin{proof} (i) and (ii) We apply the "formula of splitting" of multiple harmonic sums of \cite{I-2}, \S3 at $\{\nu_{1},\ldots,\nu_{r}\}$; this gives $\displaystyle \frak{h}_{m}\big((n_{i})_{d};(\xi_{i})_{d+1}\big) = \sum_{\substack{\frak{n} = \{\nu_{j_{1}},\ldots,\nu_{j_{d''}}\} \subset \{\nu_{1},\ldots,\nu_{d'}\}
\\ \jmath: \frak{n} \hookrightarrow \{1,\ldots,d\}\text{ injective }
\\ J_{0} \amalg \ldots \amalg J_{d'} = \{1,\ldots,d\} - \jmath(\frak{n}) \text{, satisfying }(\ast)}}
\prod_{j''=1}^{d''} \frac{1}{n_{\jmath(\nu_{j''})}^{n_{\jmath(\nu_{j''})}}} \prod_{j'=0}^{d'} \frak{h}_{\nu_{j'},\nu_{j'+1}}(w|_{J_{j'}})$ and we express each factor $ \frak{h}_{\nu_{j'},\nu_{j'+1}}(w|_{J_{j'}}) $ in terms of $\frak{h}_{\nu_{j'+1}-\nu_{j'}}$ by the $p$-adic formula of shifting of multiple harmonic sums of \cite{I-2}, \S4.1 writing $J_{j'} = [j_{j'}^{\min},j_{j'}^{\max}]$.
\end{proof}

\begin{Example} ($N=1$ and $d=1$) For all $n \in \mathbb{N}^{\ast}$, we have:
	\begin{multline}
	\frak{h}_{m}(n) = \frac{\frak{h}_{a_{y_{d'}}+1}(n)}{(p^{y_{d'}})^{n}}
	+ \sum_{i=1}^{d'-1} \sum_{l\geqslant 0} \frac{\frak{h}_{a_{y_{i}}+1}(n+l)}{(p^{y_{i}})^{n+l}} {-n \choose l} \big(a_{y_{d'}}p^{y_{d'}} + \ldots + a_{y_{i+1}} p^{y_{i+1}} \big)^{l} 
	\\ + \sum_{\substack{ 1 \leqslant j \leqslant d' 
			\\ 0 \leqslant a'_{y_{j}} \leqslant a_{y_{j}}-1}}
	\sum_{l\geqslant 0} {-n \choose l} \frak{h}_{y_{j}}(n+l) \bigg(\sum_{m=j+1}^{d'} a_{y_{m}} p^{y_{m}} + a'_{j} p^{y_{j}}\bigg)^{l} .
	\end{multline}
\end{Example}

In the formulas of the proposition, there are terms which are analytic functions of a power of $q$ by the main theorem, and certain factors which are ``polar'' in function of the $q$-adic expansion of $m$. This sheds light on the dependence of $\frak{h}_{m}$ on $m$.

The reason why studying $\frak{h}_{m}$ as a function of $m$ is a natural comes from \cite{I-1}, in which this question appeared implicitly. We have studied the map sending $m$ to the coefficient of degree $m$ in the power series expansion at $0$ of the overconvergent $p$-adic multiple polylogarithm  $\Li_{p,\alpha}^{\dagger}[w]$, for $w$ any word on $e_{0\cup \mu_{N}}$. We have proved that it can be extended to a locally analytic map on $\mathbb{Z}_{p}^{(N)}= \varprojlim \mathbb{Z}/Np^{u}\mathbb{Z}$ (\cite{I-1}, \S3). This map is a linear combination of multiple harmonic sums over the ring generated by $p$-adic cyclotomic multiple zeta values. Thus we can interpret them as a ``regularization'' of multiple harmonic sums. See also Appendix A of \cite{I-1}.

\section{Comparison between equations on integrals and equations on series}

We prove equation (\ref{eq:fourth of I-3}) and we discuss more generally the comparison between integrals and series.

\subsection{Uniqueness of the expansion of $\har_{q^{\tilde{\alpha}}}$ as a function of $\tilde{\alpha}$ and $q^{\tilde{\alpha}}$}

\begin{Proposition} \label{prop uniqueness}Let $\delta \in \mathbb{N}^{\ast}$, and a map $S: \mathbb{N}^{\ast} \cap [\delta,+\infty[ \rightarrow K$ such that we have, 
for all $a \in \mathbb{N}^{\ast} \cap [\delta,+\infty[$,  $S(a) = \sum\limits_{n \in \mathbb{N}} \sum\limits_{m=0}^{M} c_{n,m} (q^{a})^{n} a^{m}$, where $M \in \mathbb{N}^{\ast}$, and
$(c_{l,m})_{\substack{0 \leqslant n \\ 0 \leqslant m \leqslant M}} \in K^{\mathbb{N} \times \{0,\ldots,M\}}$ such that $\sum\limits_{n \in \mathbb{N}} \sum\limits_{m=0}^{M} |c_{n,m} q^{n}|_{p}< \infty$. 
If $S(a)=0$ for all $a\in \mathbb{N}^{\ast} \cap [\delta,+\infty[$, then we have $c_{n,m}=0$ for all $(n,m) \in \mathbb{N} \times \{0,\ldots,M\}$.
\end{Proposition}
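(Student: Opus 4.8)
The plan is to treat $S(a)$ as a function of the two quantities $q^a$ and $a$, and to exploit the fact that these grow at incompatible rates $p$-adically: $|q^a|_p = p^{-a\mathfrak{o}} \to 0$ very fast, while $|a|_p$ stays bounded (indeed $\geqslant p^{-v_p(a)} \geqslant 1/a$, going to $0$ only logarithmically). First I would fix attention on the lowest power of $q^a$ that occurs nontrivially: suppose for contradiction that the coefficients are not all zero, and let $n_0$ be the smallest integer such that $(c_{n_0,m})_{0\leqslant m\leqslant M}$ is not the zero vector. Then for $a$ large one can write $S(a) = (q^a)^{n_0}\big(\sum_{m=0}^{M} c_{n_0,m} a^m\big) + (q^a)^{n_0+1} R(a)$, where $R(a) = \sum_{n\geqslant n_0+1}\sum_{m=0}^M c_{n,m}(q^a)^{n-n_0-1}a^m$ is, by the summability hypothesis $\sum_{n}\sum_m |c_{n,m}q^n|_p < \infty$ together with $|a|_p\leqslant 1$, bounded in absolute value uniformly in $a$ (say by a constant $C$). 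Dividing by $(q^a)^{n_0}$, the hypothesis $S(a)=0$ gives
\begin{equation}
\Big| \sum_{m=0}^{M} c_{n_0,m}\, a^m \Big|_p \leqslant |q^a|_p \cdot C = p^{-a\mathfrak{o}}\, C \xrightarrow[a\to\infty]{} 0 .
\end{equation}

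So the single polynomial $Q(T) = \sum_{m=0}^M c_{n_0,m} T^m \in K[T]$ has the property that $|Q(a)|_p \to 0$ as $a\to\infty$ through the positive integers. The next step is to show this forces $Q = 0$, which gives the contradiction with the choice of $n_0$. Here I would argue as follows: a nonzero polynomial of degree $\leqslant M$ over $K$ is determined by its values at $M+1$ points, and more quantitatively, for any $M+1$ distinct integers $a_0 < a_1 < \dots < a_M$, Lagrange interpolation expresses each coefficient $c_{n_0,m}$ as a fixed $K$-linear combination of $Q(a_0),\dots,Q(a_M)$, with coefficients depending only on the $a_j$. Choosing the $a_j$ all large (say $a_j = A + j$ for $A$ large), each $Q(a_j)$ is small, hence each $c_{n_0,m}$ is small; but the $c_{n_0,m}$ do not depend on $A$, so letting $A\to\infty$ forces $c_{n_0,m}=0$ for every $m$. (One must check the Lagrange denominators $\prod_{i\neq j}(a_i - a_j)$ do not blow up $p$-adically — they don't, since they are just integers, so $|\prod_{i\neq j}(a_i-a_j)^{-1}|_p \geqslant 1$, i.e. dividing by them only makes things larger, which is the wrong direction; instead one uses that the interpolation coefficients are $O(1)$ in the other bound, or simply that finitely many fixed nonzero elements of $K$ times quantities tending to $0$ still tend to $0$.) This contradicts the minimality of $n_0$, so no such $n_0$ exists and all $c_{n,m}$ vanish.

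The one genuine subtlety — the main obstacle — is the interchange of limits implicit in peeling off the $(q^a)^{n_0}$ term: one needs the tail $R(a)$ to be bounded uniformly in $a$, and this is exactly what the absolute convergence hypothesis $\sum_{n}\sum_m |c_{n,m}q^n|_p<\infty$ is designed to provide, once one notes $|a^m|_p \leqslant 1$ for $a\in\mathbb{N}$. I would make this rigorous by writing, for $a \geqslant 1$,
\begin{equation}
|R(a)|_p = \Big|\sum_{n\geqslant n_0+1}\sum_{m=0}^{M} c_{n,m}(q^a)^{n-n_0-1}a^m\Big|_p \leqslant \sup_{n\geqslant n_0+1,\,0\leqslant m\leqslant M} |c_{n,m}|_p\,|q|_p^{a(n-n_0-1)} \leqslant \sup_{n,m}|c_{n,m}|_p ,
\end{equation}
using $|q|_p^{a(n-n_0-1)}\leqslant 1$; and the supremum on the right is finite because $|c_{n,m}q^n|_p\to 0$ forces $|c_{n,m}|_p$ to be bounded (as $|q^n|_p$ is bounded below only for $n$ in a finite range, but for large $n$, $|c_{n,m}|_p = |c_{n,m}q^n|_p\cdot|q^{-n}|_p$ could a priori grow — so here one must be slightly more careful and instead bound $|R(a)|_p \leqslant \sum_{n\geqslant n_0+1}\sum_m |c_{n,m}q^n|_p \cdot |q|_p^{(a-1)(n-n_0-1) - (n_0+1)}$, absorbing the convergent sum). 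Apart from this bookkeeping the argument is routine, and the conclusion follows by the minimality contradiction described above.
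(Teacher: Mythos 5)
Your proof is correct and follows the same basic strategy as the paper's: isolate the lowest power of $q^a$ occurring, exploit that $|q^a|_p\to 0$ while $|a^m|_p$ stays bounded to force the associated polynomial in $a$ to vanish, and iterate. You phrase this as a minimality-plus-contradiction; the paper does a direct induction starting at $n=0$, dividing by $q^a$ at each step --- the same content. The only genuine divergence is in how the polynomial $Q(T)=\sum_m c_{n_0,m}T^m$ is killed. The paper evaluates along the sequence $a=a_0+p^u$, $u\to\infty$: this makes $a\to a_0$ $p$-adically while $q^a\to 0$, so in the limit one obtains the exact identity $Q(a_0)=0$ for every integer $a_0\geqslant\delta$, and a polynomial of degree $\leqslant M$ vanishing at infinitely many points is zero. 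You instead establish only $|Q(a)|_p\to 0$ as $a\to\infty$ through all integers and then recover $Q=0$ by Lagrange interpolation at $M+1$ consecutive large nodes $A,A+1,\ldots,A+M$; the observation you need --- and do eventually reach, after some back-and-forth about the denominators --- is that with consecutive nodes the Lagrange denominators $\prod_{i\neq j}(j-i)$ are \emph{fixed} integers independent of $A$, so $|c_{n_0,m}|_p\leqslant p^{v_p(M!)}\max_j|Q(A+j)|_p\to 0$. Both routes work; the paper's $a_0+p^u$ trick is slicker since it yields exact vanishing rather than a quantitative estimate. One small tidying remark: your uniform bound on $R(a)$ is simpler than the expression you wrote --- with $D=\sup_{n,m}|c_{n,m}q^n|_p<\infty$, for $a\geqslant 2$ and $n\geqslant n_0+1$ the exponent $n(a-1)-a(n_0+1)$ is $\geqslant -(n_0+1)$, whence $|R(a)|_p\leqslant D\,|q|_p^{-(n_0+1)}$ uniformly in $a$.
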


\begin{proof} Let $a_{0} \in \mathbb{N}^{\ast} \cap [\delta,+\infty[$ and $u \in \mathbb{N}$. By taking $a = a_{0}+p^{u}$ in the equation $\sum\limits_{n \in \mathbb{N}} \sum\limits_{m=0}^{M} c_{n,m} (q^{a})^{n} a^{m}=0$ and by taking the limit $u \rightarrow \infty$, we get $\sum\limits_{m=0}^{M} c_{0,m} a_{0}^{m}=0$. Since this is true for infinitely many $a_{0}$, we get $c_{0,m}=0$ for all $m$. This implies that, for all $a$, $\sum\limits_{n \geqslant 1} \sum\limits_{m=0}^{M} c_{n,m} (q^{a})^{n} a^{m}= q^{a} \big( \sum\limits_{n \geqslant 1} \sum\limits_{m=0}^{M} c_{n,m} (q^{a})^{n-1} a^{m} \big)=0$, thus $\sum\limits_{n \in \mathbb{N}} \sum\limits_{m=0}^{M} c_{n+1,m} (q^{a})^{n} a^{m} = 0$. Whence the result: by induction on $n$, we have a contradiction if there exists $(n,m)$ such that $c_{n,m}\not=0$.
\end{proof}

Let us now prove (\ref{eq:fourth of I-3})). By \cite{I-1}, we have  $\Phi_{q,\tilde{\alpha}} \in K \langle \langle e_{0 \cup \mu_{N}} \rangle\rangle_{o(1)}$ for any $\tilde{\alpha} \in \mathbb{N}^{\ast}$. By Corollary \ref{comparison of vector spaces}, this implies that $|\Phi|_{q} = \sum\limits_{w \text{ word on }e_{0\cup \mu_{N}}} \underset{\tilde{\alpha} \in \mathbb{Z} \cup \{\pm\infty\}- \{0\}}{\sup}|\Phi_{q,\tilde{\alpha}}[w]|_{p}\text{ }w$ is a well-defined element of $K\langle \langle e_{0\cup \mu_{N}} \rangle\rangle_{o(1)}$. We have a similar bound for the coefficients of the expansion of each $\har_{q,\tilde{\alpha}}[w]$ obtained in \S3.3. Thus we can apply proposition \ref{prop uniqueness} to the power series expansion of each $\har_{q,\tilde{\alpha}}[w]$ in (\ref{eq:first of I-3}), (\ref{eq:second of I-3}), (\ref{eq:third of I-3}) to deduce that they are the same.

\begin{Example} \label{example of the theorem}The term of depth one ($d=1$), in the case of $\mathbb{P}^{1} - \{0,1,\infty\}$ (i.e. in the case $N=1$) for which $p=q$, $\tilde{\alpha} = \alpha$ and $\tilde{\alpha}_{0}=\alpha_{0}$, of the equations of the Theorem is the following, respectively (with $\mathcal{B}_{b}^{L+b} = \frac{1}{L+b+1}{L+b+1 \choose L}B_{L+1}$ for $1 \leqslant b \leqslant L+1$):
\newline\indent The fixed point equation of the harmonic Frobenius of integrals at (1,0) (equation (\ref{eq:first of I-3})):
\begin{equation} \har_{p^{\alpha}}(n) =  
\sum_{b = 1}^{\infty} (p^{\alpha})^{b+n} \Ad_{\Phi_{p,-\infty}}(e_{1}) \big[ e_{0}^{b}e_{1}e_{0}^{n-1}e_{1} \big] + 
\Ad_{\Phi_{p,\infty}}(e_{1}) \big[ \frac{1}{1-e_{0}}e_{1}e_{0}^{n-1}e_{1} \big] .
\end{equation}
\indent The relation of iteration of the harmonic Frobenius
of integrals at (1,0) (equation (\ref{eq:second of I-3})):
\begin{equation} \har_{p^{\alpha}}(n) = \sum\limits_{b = 1}^{\infty} \frac{(p^{\alpha})^{n}(p^{\alpha_{0}})^{b}}{p^{\alpha_{0}}-1} \Ad_{\Phi_{p,\alpha_{0}}}(e_{1})\big[e_{0}^{b}e_{1}e_{0}^{n-1}e_{1} \big] -  \frac{(p^{\alpha})^{n}}{p^{\alpha_{0}}-1} \Ad_{\Phi_{p,\alpha_{0}}}(e_{1})\big[\frac{1}{1-e_{0}}e_{1}e_{0}^{n-1}e_{1} \big] .
\end{equation}
\indent The relation of iteration of the harmonic Frobenius
of series (equation (\ref{eq:third of I-3})):
\begin{equation} \har_{p^{\alpha}}(n) = \sum\limits_{b = 1}^{\infty} \frac{p^{\alpha(n+b)}-1}{p^{\alpha_{0}(n+b)}-1} \sum\limits_{L= -1}^{\infty} \mathcal{B}^{L+b}_{b} \har_{p^{\alpha_{0}}}(n+b+L) .
\end{equation}
\indent The comparison between these three expressions (\ref{eq:fourth of I-3})
\begin{equation} \frac{(p^{\alpha_{0}})^{b}}{p^{\alpha_{0}}-1} \Ad_{\Phi_{p,\alpha_{0}}}(e_{1})\big[e_{0}^{b}e_{1}e_{0}^{n-1}e_{1} \big] = \Ad_{\Phi_{p,\infty}}(e_{1}) \big[ e_{0}^{b}e_{1}e_{0}^{n-1}e_{1} \big]= \frac{1}{p^{\alpha_{0}(n+b)}-1} \sum\limits_{L= -1}^{\infty} \mathcal{B}^{L+b}_{b} \har_{p^{\alpha_{0}}}(n+b+L) .
\end{equation}
\end{Example}

Generalizing this example to higher depths gives a new way to compute $p$-adic cyclotomic multiple zeta values.

\subsection{The map of comparison for all number of iterations}

In \cite{I-2}, definition 5.1.3, we have defined the map of comparison, from integrals to series, $\comp^{\Sigma\smallint}: K\langle\langle e_{0\cup \mu_{N}}\rangle\rangle_{o(1)}^{N} \rightarrow K\langle\langle e_{0\cup \mu_{N}}\rangle\rangle_{\har,o(1)}$, by
$(\comp^{\Sigma\smallint}((g_{\xi})_{\xi \in \mu_{N}(K)}))[e_{\xi_{d+1}}e_{0}^{n_{d}-1}e_{\xi_{d}} \ldots e_{0}^{n_{1}-1}e_{\xi_{1}}] = (-1)^{d} \sum\limits_{\xi \in \mu_{N}(K)} \xi^{-p^{\alpha}}  g_{\xi}[\frac{1}{1-e_{0}}e_{\xi_{d+1}}e_{0}^{n_{d}-1}e_{\xi_{d}}\ldots e_{0}^{n_{1}-1}e_{\xi_{1}}]$.

In the context of this paper, it is natural to define a variant of the map of comparison from integral to series, which takes into account the properties of the iterated Frobenius viewed as a function of its number of iterations, and which has the additional advantage of being injective.

\begin{Definition} Let the map
$\comp^{\Sigma\smallint}_{\iter}: \Ad_{\tilde{\Pi}_{1,0}(K)_{o(1)}}(e_{1}) \rightarrow ( K \langle\langle e_{0\cup\mu_{N}} \rangle\rangle_{\text{har}}^{\smallint_{1,0}})^{q^{\mathbb{N}^{\ast}}}$ defined by
$f \mapsto \big( \tau(q^{\tilde{\alpha}})(f) \circ_{\har}^{\smallint_{1,0}} \comp^{\Sigma\smallint} f \big)_{\tilde{\alpha} \in \mathbb{N}^{\ast}}$.
\end{Definition}

Equation (\ref{eq:first of I-3}) can be restated as $$ \comp^{\Sigma\smallint}_{\iter}(\Ad_{\Phi_{q,-\infty}}(e_{1})) = (\har_{q,\tilde{\alpha}})_{\alpha \in \mathbb{N}^{\ast}} $$

The key property of $\comp^{\Sigma\smallint}_{\iter}$, which a priori does not hold for $\comp^{\Sigma\smallint}$, is:

\begin{Proposition} \label{prop injectivity comp iter} $\comp^{\Sigma\smallint}_{\iter}$ is injective.
\end{Proposition}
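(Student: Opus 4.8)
The plan is to reduce the injectivity to two statements: that the family $\comp^{\Sigma\smallint,\iter}(f)=\big(\tau(q^{\tilde\alpha})(f)\circ_{\har}^{\smallint_{1,0}}\Sigma f\big)_{\tilde\alpha\in\mathbb{N}^{\ast}}$ determines the formal power series $\tau(\Lambda)(f)\circ_{\har}^{\smallint_{1,0}}\Sigma f$ in a variable $\Lambda$, and that this formal power series in turn determines $f$. Concretely, suppose $\comp^{\Sigma\smallint,\iter}(f_1)=\comp^{\Sigma\smallint,\iter}(f_2)$. First I would observe that, for every harmonic word $w$, the map $\tilde\alpha\mapsto\big(\tau(q^{\tilde\alpha})(f_i)\circ_{\har}^{\smallint_{1,0}}\Sigma f_i\big)[w]$ is a convergent power series $\sum_{n\geqslant 0}c^{(i)}_{n,w}(q^{\tilde\alpha})^{n}$ — no polynomial-in-$\tilde\alpha$ factor occurs, since $\tau(q^{\tilde\alpha})$ only multiplies the weight-$n$ component by $(q^{\tilde\alpha})^{n}$ and the substitution defining $\circ_{\har}^{\smallint_{1,0}}$ (diagram (\ref{eq:diagram}) and the dual of $\circ_{\Ad}^{\smallint_{1,0}}$) introduces nothing further — and that $\sum_{n}|c^{(i)}_{n,w}q^{n}|_{p}<\infty$, by the $o(1)$ hypothesis on $f_i$ and the submultiplicativity estimates of \S1.2, exactly as in the proof of (\ref{eq:fourth of I-3}). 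Proposition \ref{prop uniqueness} (with $M=0$, $\delta=1$) then forces $c^{(1)}_{n,w}=c^{(2)}_{n,w}$ for all $n,w$, i.e. the identity of formal power series $F_1(\Lambda)=F_2(\Lambda)$, where $F_i(\Lambda)=\tau(\Lambda)(f_i)\circ_{\har}^{\smallint_{1,0}}\Sigma f_i$.

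The core of the argument is to recover $f$ from $F(\Lambda)$. Unfolding $\circ_{\har}^{\smallint_{1,0}}$ as in (\ref{eq:formula of I-2}) — and using that $\xi^{q^{\tilde\alpha}}=\xi$ for $\xi\in\mu_N(K)$, so the factors $\xi^{-q^{\tilde\alpha}}$ contribute only constants — I would show that for a harmonic word $w$ of word-depth $d$ and weight $W$, and for $n$ large, the coefficient of $\Lambda^{n}$ in $F(\Lambda)[w]$ equals a linear combination $\sum_{\xi}c_{\xi}(w,n)\,f^{(\xi)}[e_0^{k}w]$ over the roots of unity $\xi$ occurring in $w$ (with $k$ determined by $n$ and the coefficient on the leading letter of $w$ nonzero), plus a correction that is a polynomial in coefficients of the $f^{(\xi')}$ on words of depth $<d$ and in values of $\Sigma f$ on harmonic words of depth $<d$ — all of which is determined by the restriction of $F(\Lambda)$ to harmonic words of depth $<d$. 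Combined with the remark that the depth-$0$ and depth-$1$ parts of any element of $\Ad_{\tilde\Pi_{1,0}(K)}(e_1)$ are trivial apart from $e_1$ itself (since $g\in\tilde\Pi_{1,0}$ has $g[e_0]=g[e_1]=0$, hence $g[e_0^{k}]=0$ for all $k$ by the shuffle relations, so $\Ad_g(e_1)=g^{-1}e_1g$ has trivial depth-$0$ part and depth-$1$ part $e_1$), I would run an induction on $d$: at the inductive step one first recovers, for each $\xi$, all coefficients $f^{(\xi)}[e_0^{k}v]$ with $v$ harmonic of word-depth $d$ and $k\geqslant 0$ — disentangling the various $\xi$ by letting $w$ vary and using the bijectivity of the twists $f\mapsto f^{(\xi)}$, $\Pi_{1,0}\simra\Pi_{\xi,0}$, just as in the $\sum_{\xi}$ of (\ref{eq:formula of I-2}), and clearing the correction by the inductive hypothesis — and then recovers the remaining depth-$d$ coefficients, namely those on words ending in $e_0$, from the ones already found, using that $\Ad_g(e_1)$ is a primitive element of the shuffle Hopf algebra, so $f^{(\xi)}[u\sh v]=0$ for non-empty $u,v$; a downward induction on the length $m$ of the trailing block of $e_0$'s does this, since for $v$ not ending in $e_0$ the word $ve_0^{m}$ occurs in $v\sh e_0^{m}$ with coefficient $1$ while every other term of $v\sh e_0^{m}$ has a strictly shorter trailing block of $e_0$'s. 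This yields $f_1^{(\xi)}=f_2^{(\xi)}$ for all $\xi$, in particular $f_1=f_2$, proving injectivity.

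I expect the main obstacle to be the bookkeeping in the second step: making precise which coefficients of the $f^{(\xi)}$ enter the $\Lambda^{n}$-coefficient of $F(\Lambda)[w]$, verifying the triangularity of the resulting linear system (first in depth, then in the length of the trailing $e_0$-block), and checking that the leading coefficients $c_{\xi}(w,n)$ are genuinely invertible. This is combinatorial analysis of the same type that underlies (\ref{eq:formula of I-2}) and the manipulations of \S1, so it should go through, but it is where the care must be concentrated; the passage to formal power series via Proposition \ref{prop uniqueness} and the invocation of primitivity are routine.
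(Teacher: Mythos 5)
Your proposal follows the same line as the paper's (very terse) proof: extract, for each harmonic word $w$, the coefficient of $(q^{\tilde\alpha})^{n}$ in $\comp^{\Sigma\smallint,\iter}(f)[w]$ — using that a family indexed by $\tilde\alpha$ determines a formal power series in $\Lambda$, as you make explicit via Proposition \ref{prop uniqueness} — identify the leading depth term, and induct on depth. Your step recovering the coefficients of $f$ on words ending in $e_{0}$ via the primitivity of $\Ad_{g}(e_{1})$ in the shuffle Hopf algebra, combined with a downward induction on the length of the trailing $e_{0}$-block, is correct and fills a gap that the paper leaves implicit (the paper only asserts that the coefficients $f[e_{0}^{k}w]$ with $w$ harmonic are determined, then invokes the induction without comment on the remaining words); the triangularity claim — that $ve_{0}^{m}$ is the unique term of $v\sh e_{0}^{m}$ with trailing block of length $m$, with coefficient $1$ when $v$ does not end in $e_{0}$ — is right.

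Where you overcomplicate is in the identification of the leading term. You model it on (\ref{eq:formula of I-2}), writing $\sum_{\xi}c_{\xi}(w,n)f^{(\xi)}[e_{0}^{k}w]$ and citing the factors $\xi^{-q^{\tilde\alpha}}$ — but those factors belong to the formula for $\har_{q^{\tilde\alpha}}$ in terms of $\Phi_{p,\alpha}$, not to $\comp^{\Sigma\smallint,\iter}$. Unfolding $\comp^{\Sigma\smallint,\iter}(f)[w]$ directly through the diagram (\ref{eq:diagram}) gives $\sum_{l\geqslant 0}f\big(e_{0},(\tau(\Lambda)f^{(x)})_{x}\big)[e_{0}^{l}w]$, and since $f=\Ad_{g}(e_{1})=e_{1}+(\text{depth}\geqslant 2)$ (as you observe), the depth-one part of $f$ contributes only through the letter $e_{1}$, yielding the single term $\Lambda^{l+\weight(w)}f[e_{0}^{l}w]$ at depth equal to $\depth(w)$; all contributions with factors of $f$ of depth $\geqslant 2$ land, for $n>\depth(w)$, on products of coefficients of $f$ and its twists of strictly smaller depth. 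So the leading coefficient is just $f[e_{0}^{n-\weight(w)}w]$ with coefficient $1$, no sum over $\xi$, no disentangling needed, and the invertibility you flag as the main obstacle is automatic. Your plan is sound, but the worry you concentrate on disappears once you unfold $\circ_{\har}^{\smallint_{1,0}}$ on its own terms rather than through (\ref{eq:formula of I-2}).
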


\begin{proof} Let a word $w=e_{\xi_{d+1}}e_{0}^{n_{d}-1}e_{\xi_{d}} \ldots e_{0}^{n_{1}-1}e_{\xi_{1}}$. For 
$n \geqslant \weight(w)$, let us consider the coefficient of $(q^{a})^{n}$ in $\comp^{\Sigma\smallint,\text{iter}}(f)[w]$. It is equal to $f[e_{0}^{n-(n_{1}+\ldots+n_{d})}e_{\xi_{d+1}}e_{0}^{n_{d}-1}e_{\xi_{d}} \ldots e_{0}^{n_{1}-1}e_{\xi_{1}}] $ + terms of lower depth, where the depth is the one of coefficients of $f$. This gives the result by an induction on the depth.
\end{proof}

\section{Iteration of the Frobenius on $\mathbb{P}^{1} - \cup_{\xi} B(\xi,1)$}

In the previous sections, we have considered the Frobenius of $\pi_{1}^{\un,\crys}(\mathbb{P}^{1} - \{0,\mu_{N},\infty\})$ at base-points (1,0) and the harmonic Frobenius. We now consider the Frobenius of $\pi_{1}^{\un,\crys}(\mathbb{P}^{1} - \{0,\mu_{N},\infty\})$ on the affinoid subspace $U^{\an}=\mathbb{P}^{1,\an} - \cup_{\xi^{N}=1} B(\xi,1)$ of $\mathbb{P}^{1,\an}$ over $K$, where $B(\xi,1)$ is the open ball of center $\xi$ and of radius $1$. As in \S2, we will have a fixed-point equation (\S5.1) and an iteration equation (\S5.2). Additionally, we will have a third equation coming from the study of regularized iterated integrals in \cite{I-1} (\S5.3).

\subsection{Fixed point equation}

The fixed point equation of the Frobenius is known thanks to the theory of Coleman integration. It amounts to the definition of $p$-adic multiple polylogarithms $\Li_{q}^{\KZ}$ as Coleman integrals, in \cite{Furusho 1} \cite{Furusho 2} for $N=1$ and in \cite{Yamashita} for any $N$: we have

\begin{equation} \label{eq:the fixed point equation} \phi_{\frac{\log(q)}{\log(p)}}( \Li_{q}^{\KZ} ) = \Li_{q}^{\KZ} .
\end{equation}

\indent Restricted to $U^{\an}$, the fixed point equation amounts to the following equations (\cite{I-1}, proposition 2.1.3), they involve the overconvergent $p$-adic multiple polylogarithms $\Li_{p,\alpha}^{\dagger}[w]$ (\cite{I-1}, definition 1.2.5), which are overconvergent analytic functions on $U^{\an}$:

\begin{equation} \label{eq:first} \Li_{p,\alpha}^{\dagger}(z) = 
\Li_{q}^{\KZ}(z)\big(p^{\alpha}e_{0},(p^{\alpha}e_{\xi})_{\xi} \big) \text{ }\text{ }
{\Li_{q}^{\KZ}}^{(p^{\alpha})}(z^{p^{\alpha}})\big(e_{0},(\Ad_{\Phi^{(\xi)}_{p,\alpha}}(e_{\xi})_{\xi} \big)^{-1};
\end{equation}

\begin{equation} \label{eq:second} \Li_{p,-\alpha}^{\dagger}(z) = 
{\Li_{q}^{\KZ}}^{(p^{\alpha})}(z^{p^{\alpha}})(e_{0},(e_{\xi^{(p^{\alpha})}})_{\xi})
\text{ }\text{ } \Li_{q}^{\KZ}(z)
\big(p^{\alpha}e_{0},p^{\alpha}(\Ad_{\Phi^{(\xi^{p^{\alpha}})}_{p,-\alpha}}(e_{\xi^{(p^{\alpha})}})_{\xi}\big)^{-1} .
\end{equation}

where ${\Li_{q}^{\KZ}}^{(p^{\alpha})}$ is the analog of $\Li_{q}^{\KZ}$ on $X^{(p^{\alpha})}$ equal to the pull-back of $X=(\mathbb{P}^{1} - \{0,\mu_{N},\infty\})\text{ }/\text{ }K$ by $\sigma^{\alpha}$ where $\sigma$ is the Frobenius automorphism of $K$. When $\alpha$ is a multiple of $\frac{\log(q)}{\log(p)}$, $X^{(p^{\alpha})} = X$ and ${\Li_{q}^{\KZ}}^{(p^{\alpha})} = \Li_{q}^{\KZ}$, and when $\alpha=\frac{\log(q)}{\log(p)}$, equations 
(\ref{eq:first}), (\ref{eq:second}) are directly equivalent to (\ref{eq:the fixed point equation}).

\begin{Notation} For any $\tilde{\alpha} \in \mathbb{Z} \cup \{\pm \infty\} - \{0\}$, let $\Li_{q,\tilde{\alpha}}^{\dagger}=\Li_{p,\alpha}^{\dagger}$ and $\Li_{q,-\tilde{\alpha}}^{\dagger}=\Li_{p,-\alpha}^{\dagger}$, with $p^{\alpha} = q^{\tilde{\alpha}}$.
\end{Notation}

The Frobenius on $U^{\an}$ is characterized by the couple $(\Li_{p,\alpha}^{\dagger},\Phi_{p,\alpha})$. We already know by \S2 a description of $\Phi_{p,\alpha}$ as a function of $\alpha$. If we combine it with equations (\ref{eq:first}), (\ref{eq:second}), we deduce a description of $\Li_{p,\alpha}^{\dagger}$ as a function of $\alpha$. This gives a description of the iterated Frobenius on $U^{\an}$ as a function of its number of iterations. We leave the details to the reader. One can also write an analogue for $U^{\an}$ of the notion of contraction mapping at base-points (1,0) of definition \ref{def contracting} and of the fact that the Frobenius at (1,0) is a contraction of lemma \ref{lemma Frobenius is p-contraction}.

Let us just consider the convergence of the iterated Frobenius towards the fixed point when the number of iterations tends to $\infty$, in the unit ball $\B(0,1)$.

\begin{Proposition} \label{prop convergence Li} For all $z \in K$ such that $|z|_{p}<1$, we have, for the $\mathcal{N}_{\Lambda,D}$-topology:
	$$ \tau(q^{-\tilde{\alpha}})\Li_{q,\tilde{\alpha}}^{\dagger}(z) 
	\underset{\tilde{\alpha} \rightarrow \infty}{\longrightarrow} \Li_{q}^{\KZ}(z)\big(e_{0},(e_{\xi})_{\xi} \big); $$
	$$ \tau(q^{-\tilde{\alpha}})\Li_{q,-\tilde{\alpha}}^{\dagger}(z)
	\underset{\tilde{\alpha} \rightarrow \infty}{\longrightarrow}
	\Li_{q}^{\KZ}(z)\big( e_{0},(\Ad_{\Phi_{q,-\infty}^{(\xi)}}(e_{\xi}))_{\xi} \big) . $$
	Moreover, these convergences are uniform on all the closed disks of center $0$ and radius $\rho < 1$. 
\end{Proposition}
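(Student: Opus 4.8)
The plan is to substitute the explicit formulas (\ref{eq:first}) and (\ref{eq:second}) into the two sides and to split each into two independent limits. Since here $\alpha = \tilde{\alpha}\frac{\log q}{\log p}$ is a multiple of $\mathfrak{o}$, one has $\sigma^{\alpha}=\id$, hence $X^{(p^{\alpha})}=X$, ${\Li_{q}^{\KZ}}^{(p^{\alpha})}=\Li_{q}^{\KZ}$ and $\xi^{(p^{\alpha})}=\xi$, so that the first factor of (\ref{eq:first}) equals $\tau(q^{\tilde{\alpha}})\big(\Li_{q}^{\KZ}(z)\big)$ and its second factor equals $\Li_{q}^{\KZ}(z^{q^{\tilde{\alpha}}})\big(e_{0},(\Ad_{\Phi^{(\xi)}_{q,\tilde{\alpha}}}(e_{\xi}))_{\xi}\big)^{-1}$, and similarly for (\ref{eq:second}). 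As $\tau(\lambda)$ is a morphism for the concatenation product and commutes with the corresponding inversion, applying $\tau(q^{-\tilde{\alpha}})$ and cancelling $\tau(q^{-\tilde{\alpha}})\tau(q^{\tilde{\alpha}})=\id$ reduces both assertions to: (a) $\tau(q^{-\tilde{\alpha}})\big(\Li_{q}^{\KZ}(z^{q^{\tilde{\alpha}}})(e_{0},(h_{\xi})_{\xi})\big)\to 1$ for $(h_{\xi})$ equal to $(e_{\xi})$ or to $(\Ad_{\Phi^{(\xi)}_{q,\tilde{\alpha}}}(e_{\xi}))$; and (b) the convergence of the remaining factor, built from $\Li_{q}^{\KZ}(z)$ and $\Phi_{q,\pm\tilde{\alpha}}$, to the analogous factor built from $\Phi_{q,\pm\infty}$.

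For (b) I would invoke Proposition \ref{prop fixed point equation at 1,0}~(i)(b), which gives $\Phi_{q,\pm\tilde{\alpha}}\to\Phi_{q,\pm\infty}$ for the $\mathcal{N}_{D}$-topology; since all the series in play are bounded (Corollary \ref{comparison of vector spaces} and \cite{I-1}), $\mathcal{N}_{D}$-convergence implies convergence for the coarser $\mathcal{N}_{\Lambda,D}$-topology. One then concludes by the continuity, for $\mathcal{N}_{\Lambda,D}$, of the maps $g\mapsto g^{(\xi)}$ (which preserves $\mathcal{N}_{\Lambda,D}$), $g\mapsto\Ad_{g^{(\xi)}}(e_{\xi})$, the substitution $g\mapsto\Li_{q}^{\KZ}(z)(e_{0},(\Ad_{g^{(\xi)}}(e_{\xi}))_{\xi})$, the concatenation product and its inversion --- all instances of the compatibilities of $\mathcal{N}_{\Lambda,D}$ recorded in \S1.2 and Proposition \ref{prop prs et N}. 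Since this factor depends only polynomially on $z$, the bounds here are automatically uniform on $|z|_{p}\leqslant\rho$.

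The core of the argument is (a), and it is where I expect the main obstacle. The mechanism is the following valuation estimate: for a word $v$ of weight $n$, the coefficient $\Li_{q}^{\KZ}[v]$, regarded as a power series in $z$ at $0$, has vanishing constant term, and the $p$-adic absolute value of its $m$-th Taylor coefficient is bounded by a polynomial in $m$ of degree depending only on $n$ --- this is the von Staudt--Clausen type bound already used in part (a) of the proof of (\ref{eq:third of I-3}) and established in \cite{I-1}. Substituting $e_{\xi}\mapsto h_{\xi}$ with $\mathcal{N}_{\Lambda,D}(h_{\xi})$ bounded uniformly in $\tilde{\alpha}$ (Corollary \ref{comparison of vector spaces}~(ii)) and with $h_{\xi}$ starting in weight $1$ mixes only finitely many such coefficients, so the weight-$n$ coefficient of $\Li_{q}^{\KZ}(z^{q^{\tilde{\alpha}}})(e_{0},(h_{\xi})_{\xi})$ has $p$-adic size $\leqslant C_{n}\,|z|_{p}^{q^{\tilde{\alpha}}}\leqslant C_{n}\,\rho^{q^{\tilde{\alpha}}}$ for $n\geqslant 1$, and equals $1$ for $n=0$. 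Applying $\tau(q^{-\tilde{\alpha}})$ multiplies the weight-$n$ part by $q^{\tilde{\alpha}n}$ in absolute value; since $\rho^{q^{\tilde{\alpha}}}$ decays geometrically in $q^{\tilde{\alpha}}$ whereas $q^{\tilde{\alpha}n}$ is only a fixed power of $q^{\tilde{\alpha}}$, each $\Lambda^{n}D^{d}$-coefficient of $\mathcal{N}_{\Lambda,D}\big(\tau(q^{-\tilde{\alpha}})(\Li_{q}^{\KZ}(z^{q^{\tilde{\alpha}}})(e_{0},(h_{\xi})_{\xi}))-1\big)$ tends to $0$, which is exactly $\mathcal{N}_{\Lambda,D}$-convergence to $1$; and the bound $C_{n}\rho^{q^{\tilde{\alpha}}}$ is independent of $z$ for $|z|_{p}\leqslant\rho$, giving the uniformity. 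The delicate points are the precise bookkeeping of how the $\Ad$-substitution interacts with the weight grading (so that no constituent word is amplified by more than $q^{\tilde{\alpha}n}$, using that $\Ad_{\Phi^{(\xi)}}(e_{\xi})$ has lowest weight $1$) and checking that the polynomial-growth estimate for the Taylor coefficients is uniform in $\tilde{\alpha}$; granting these, (a) and (b) combine to give the two stated limits, and the last assertion follows since all estimates were uniform on $|z|_{p}\leqslant\rho$.
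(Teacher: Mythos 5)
Your proposal takes essentially the same route as the paper. Both factor through equations (\ref{eq:first}) and (\ref{eq:second}), isolate the factor built from $\Li_q^{\KZ}(z^{q^{\tilde{\alpha}}})$ as the piece that must tend to $1$, and establish that convergence by playing the super-geometric decay of $|z|_p^{q^{\tilde{\alpha}}m}$ against the at-most-polynomial growth in $q^{\tilde{\alpha}}$ coming from the twist $\tau(q^{-\tilde{\alpha}})$ and from the harmonic-sum Taylor coefficients of $\Li_q^{\KZ}$, with the uniform bound on $\mathcal{N}_{\Lambda,D}(\Phi_{q,\pm\tilde{\alpha}})$ supplied by \cite{I-1} and Corollary \ref{comparison of vector spaces}(ii); the paper simply writes the coefficient of the second factor explicitly as $q^{-\tilde{\alpha}\weight(w)}\sum_{(w_1,w_2)}\sum_m \frak{h}_{q^{\alpha}m}(w_1)\frac{z^{q^{\tilde{\alpha}}m}}{(q^{\tilde{\alpha}}m)^L}\zeta_{q,\alpha}(w_2)$ and then does the same estimate you sketch.
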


\begin{proof} $\tau(q^{-\tilde{\alpha}})\Li_{q,\tilde{\alpha}}^{\dagger}(z)$ is the product of $ \Li_{q}^{\KZ}(z)\big(e_{0},(e_{\xi})_{\xi} \big)$ by 
	\begin{equation} \label{eq:factor} \tau(q^{-\tilde{\alpha}})\Li_{p,X_{K}^{(q^{\tilde{\alpha}})}}^{\KZ}(z^{q^{\tilde{\alpha}}}) \big(e_{0},(\Ad_{\Phi^{(\xi)}_{q,\tilde{\alpha}}}(e_{\xi}))_{\xi} \big)^{-1} .
	\end{equation}
	The coefficient of (\ref{eq:factor}) at a word $w$ is of the form, 
	$q^{-\tilde{\alpha}\weight(w)}\sum\limits_{(w_{1},w_{2})} \sum\limits_{m =0}^{\infty} \frak{h}_{q^{\alpha}m}(w_{1}) \frac{z^{q^{\tilde{\alpha}}m}}{(q^{\tilde{\alpha}}m)^{L}} \zeta_{q,\alpha}(w_{2})$ where  $L \in \mathbb{N}^{\ast}$ and $w_{1},w_{2}$ are in a finite set depending only on $w$, determined by the combinatorics of the composition of non-commutative formal power series.
	\newline\indent For all $m \in \mathbb{N}^{\ast}$ we have $-v_{p}(m) \geqslant - \frac{\log(m)}{\log(p)}$. Applying this to the $m_{i}$'s in (\ref{eq:harmonic sum}) we deduce $v_{p}(\har_{q^{\tilde{\alpha}}m}(w)) \geqslant -\weight(w) \frac{\tilde{\alpha}\log(q) + \log(n)}{\log(p)}$.  For all $C,C' \in \mathbb{R}^{+\ast}$, and $z \in K$ such that $|z|_{p}<1$, we have $q^{\tilde{\alpha}}n v_{p}(z) - C \tilde{\alpha} - C' \log(n) \underset{\tilde{\alpha}\rightarrow \infty}{\longrightarrow} +\infty$ and this convergence is uniform with respect to $n$.
	Indeed, let $n_{0}$ be an integer such that for all $n \geqslant n_{0}$, we have $C'\log(n) \leqslant \frac{n}{2} v_{p}(z)$; then $n_{0}$ is independent of $\tilde{\alpha}$ and we have, for all $n \geqslant n_{0}$, $q^{\tilde{\alpha}}n v_{p}(z) - C \tilde{\alpha} - C' \log(n) \geqslant \frac{q^{\tilde{\alpha}}n}{2} v_{p}(z) - C\tilde{\alpha}$. Because of the bounds of valuations of cyclotomic $p$-adic multiple zeta values of \cite{I-1}, \S4, the sequence $\big(\mathcal{N}_{\Lambda,D}(\Phi_{q,-\alpha})\big)_{\tilde{\alpha} \in \mathbb{N}^{\ast}}$ is bounded. Thus, $\tau(q^{-\tilde{\alpha}})\Li_{q,\alpha}^{\dagger}(z)$ converges to $\Li_{q}^{\KZ}(z)$. Moreover, we can see that $n_{0}$ can be chosen independently from $z$ in a closed disk of center $0$ and radius $\rho < 1$. 
	\newline\indent The proofs of the statements concerning $\tau(q^{-\tilde{\alpha}})\Li_{q,-\alpha}^{\dagger}(z)$ are similar.
\end{proof}

\begin{Remark} The convergence in proposition \ref{prop convergence Li}, (ii) does not a priori extend to a uniform convergence on $U^{\an}$. Indeed, otherwise, in fact, the map $\Li_{q}^{\KZ}$ would be rigid analytic on $U^{\an}$. By the main result of Appendix A of \cite{I-1}, this would imply that, for any word $w$, the multiple harmonic sums functions $m \mapsto \har_{m}(w)$ restricted to classes of congruences modulo $N$ should be continuous as a function $m \in \mathbb{N} \subset \varprojlim \mathbb{Z}/Np^{l}\mathbb{Z}$.
This seems to contradict the results of \S3.3. More generally, we expect that the lack of regularity of the maps $m \mapsto \har_{m}(w)$ can be at least partially reflected in the mode of convergence of the sequences $\Li_{q,\tilde{\alpha}}^{\dagger}[w]$ when $\alpha \rightarrow \infty$.
\end{Remark}

\begin{Remark} One can deduce a result similar to proposition \ref{prop convergence Li} on the ball $B(\infty,1)$ by applying the automorphism $z \mapsto \frac{1}{z}$ of $\mathbb{P}^{1} - \{0,\mu_{N},\infty\}$ and the functoriality of $\pi_{1}^{\un,\crys}$.
\end{Remark}

\subsection{Iteration equation}

We now write an equation for the iteration of the Frobenius on the subspace $\mathbb{P}^{1,\an} - \cup_{\xi\in \mu_{N}(K)} B(\xi,1)$. We restrict the statement to positive numbers of iterations for simplicity, but a similar result holds for negative numbers of iterations. If $f(z)=\sum\limits_{m=0}^{\infty} c_{m}z^{m}$ is a power series with coefficients in $K$, let $f^{(p^{\alpha})}(z) = \sum\limits_{m=0}^{\infty} \sigma^{\alpha}(c_{m})z^{m}$ where $\sigma$ is the Frobenius automorphism of $K$. 

\begin{Proposition} For any, $\alpha_{0},\alpha \in \mathbb{N}^{\ast}$ with $\alpha_{0}$ dividing $\alpha$, we have
\begin{multline*}
\Li_{p,\alpha}^{\dagger}  \big(e_{0},(e_{\xi})_{\xi}\big) =
\\  \Li_{p,\alpha_{0}}^{\dagger}\big(e_{0},(e_{\xi})_{\xi}\big) 
{\Li_{p,\alpha_{0}}^{\dagger}}^{(p^{\alpha_{0}})}\big(e_{0},(\Ad_{\Phi_{p,\alpha_{0}}^{(\xi)}}(e_{\xi})_{\xi}\big) \cdots  
{\Li_{p,\alpha_{0}}^{\dagger}}^{(p^{\frac{\alpha}{\alpha_{0}}-1})}\big(e_{0},(\Ad_{{\Phi_{p,\alpha_{0}}^{\alpha-1}}^{(\xi)}}(e_{\xi})_{\xi}\big) .
\end{multline*}
\end{Proposition}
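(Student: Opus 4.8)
Write $k=\frac{\alpha}{\alpha_{0}}$. The plan is to obtain the formula by iterating $k$ times the description of the overconvergent $p$-adic multiple polylogarithm given by equations (\ref{eq:first})--(\ref{eq:second}), which express $\Li_{p,\beta}^{\dagger}$ in terms of the Frobenius $\phi_{\beta}$, the Frobenius-invariant series $\Li_{q}^{\KZ}$, and the generating series $\Phi_{p,\beta}$ of $p$-adic cyclotomic multiple zeta values. The structural ingredient is the cocycle property of the crystalline Frobenius, which is built into its definition $\phi_{\gamma}=(\sigma^{\gamma-1})^{\ast}\phi\circ\cdots\circ\sigma^{\ast}(\phi)\circ\phi$: grouping the $\gamma$ factors into $k$ consecutive blocks of length $\alpha_{0}$ exhibits $\phi_{\alpha}$ as a composite of $k$ copies of $\phi_{\alpha_{0}}$, the $j$-th of them pulled back by $\sigma^{\alpha_{0}j}$ ($j=0,\ldots,k-1$), composed in the order determined by our right-to-left convention for the groupoid product. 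I would then argue by induction on $k$, the case $k=1$ being equation (\ref{eq:first}) itself.

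For the inductive step I would split off one block, writing $\phi_{\alpha}$ as $\phi_{\alpha-\alpha_{0}}$ composed with the $\sigma^{\alpha-\alpha_{0}}$-pullback of $\phi_{\alpha_{0}}$, and apply this to ${\Li_{q}^{\KZ}}^{(p^{\alpha})}$. Applying the pulled-back block produces, by the $\sigma^{\alpha-\alpha_{0}}$-pullback of (\ref{eq:first}), the factor ${\Li_{p,\alpha_{0}}^{\dagger}}^{(p^{\alpha-\alpha_{0}})}$ together with the substitution $e_{\xi}\mapsto\Ad_{\sigma^{\alpha-\alpha_{0}}(\Phi_{p,\alpha_{0}})^{(\xi)}}(e_{\xi})$ coming from the monodromy around $\xi$; applying then $\phi_{\alpha-\alpha_{0}}$, which by (\ref{eq: formula for Frob 1}) acts on de Rham series by the Ihara action of $\Phi_{p,\alpha-\alpha_{0}}$ together with the weight scaling $\tau(p^{\alpha-\alpha_{0}})$, reproduces by the induction hypothesis the remaining $k-1$ factors, while converting the above substitution into $e_{\xi}\mapsto\Ad_{(\Phi_{p,\alpha_{0}}^{\,k-1})^{(\xi)}}(e_{\xi})$. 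The identification of the accumulated adjoint twist with the $(k-1)$-fold weighted-Ihara iterate $\Phi_{p,\alpha_{0}}^{\,k-1}=\Phi_{p,\alpha-\alpha_{0}}$ is exactly the iteration equation of Proposition \ref{prop fixed point equation at 1,0} (ii); the intermediate copies of ${\Li_{q}^{\KZ}}^{(p^{\alpha_{0}j})}$ disappear telescopically because $\Li_{q}^{\KZ}$ is Frobenius-invariant (equation (\ref{eq:the fixed point equation})); and the scalings $\tau(p^{\alpha_{0}})$ multiply to the single $\tau(p^{\alpha})$ already carried by $\Li_{p,\alpha}^{\dagger}$ through (\ref{eq:first}).

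The hard part will be the bookkeeping in the inductive step, where one has to keep simultaneous control of (i) the order of multiplication, the groupoid product being read from right to left; (ii) the $\sigma^{\alpha_{0}j}$-pullbacks of coefficients and of $\Phi_{p,\alpha_{0}}$, and the induced substitutions $z\mapsto z^{p^{\alpha_{0}j}}$ on the analytic variable; (iii) the nesting of the successive $\Ad$-substitutions and its identification with $\Phi_{p,\alpha_{0}}^{\,j}$ via Proposition \ref{prop fixed point equation at 1,0} (ii); and (iv) the interaction of the weight scalings $\tau(p^{\alpha_{0}})$ with those substitutions. Since all the required inputs --- the cocycle property of $\phi$, equations (\ref{eq:first})--(\ref{eq:second}), the Frobenius-invariance (\ref{eq:the fixed point equation}), and Proposition \ref{prop fixed point equation at 1,0} (ii) --- are already available, once the indexing is pinned down the verification is purely formal. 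Alternatively, and this is the version I would write out in full, one can bypass the geometric cocycle: substitute into each factor of the right-hand side the $\sigma^{\alpha_{0}j}$-pullback of (\ref{eq:first}) followed by the twist $e_{\xi}\mapsto\Ad_{(\Phi_{p,\alpha_{0}}^{\,j})^{(\xi)}}(e_{\xi})$, and check directly that the resulting product telescopes, using the Frobenius-invariance of $\Li_{q}^{\KZ}$ and the iteration equation for $\Phi_{p,\bullet}$, so as to leave precisely the expression for $\Li_{p,\alpha}^{\dagger}$ furnished by (\ref{eq:first}).
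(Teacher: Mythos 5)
Your proposal is essentially the same argument as the paper's, which simply records the explicit formula for the action of $\tau(p^{\alpha})\phi_{\alpha}$ on rigid analytic sections of $\pi_{1}^{\un}$ over $\mathbb{P}^{1,\an}\setminus\cup_{\xi}B(\xi,1)$ and then asserts that composing this for $\phi_{\alpha_{0}}$ via the cocycle property $\phi_{\alpha}=(\sigma^{\alpha-\alpha_{0}})^{\ast}(\phi_{\alpha_{0}})\circ\cdots\circ(\sigma^{\alpha_{0}})^{\ast}(\phi_{\alpha_{0}})\circ\phi_{\alpha_{0}}$ yields the telescoping product. You have merely unpacked what the paper's one-line ``this implies the result'' leaves to the reader: the induction on $\tfrac{\alpha}{\alpha_{0}}$, the cancellation of intermediate copies of $\Li_{q}^{\KZ}$ by Frobenius-invariance, and the identification of the accumulated $\Ad$-twists with the weighted Ihara iterates $\Phi_{p,\alpha_{0}}^{j}$ via Proposition \ref{prop fixed point equation at 1,0}(ii).
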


\begin{proof} The crystalline Frobenius of $\pi_{1}^{\un}(\mathbb{P}^{1} - \{0,\mu_{N},\infty\})$, restricted to the rigid analytic sections on $\mathbb{P}^{1,\an} - \cup_{\xi\in \mu_{N}(K)} B(\xi,1)$, is given, with the conventions of \cite{I-1}, by the formula
\begin{equation}
\tau(p^{\alpha})\phi^{\alpha}: f\big(e_{0},(e_{\xi})_{\xi}\big)(z) \longmapsto \Li_{p,\alpha}^{\dagger}\big(e_{0},(e_{\xi})_{\xi}\big)(z) \times  f^{(p^{\alpha})}(z^{p^{\alpha}}) \big(e_{0}, \Ad_{\Phi^{(\xi)}_{p,\alpha}}(e_{\xi})_{\xi} \big) .
\end{equation}
This implies the result.
\end{proof}

\subsection{Another iteration equation via regularized iterated integrals}

The computation of overconvergent $p$-adic multiple polylogarithms in \cite{I-1}, which was centered around a notion of regularization of iterated integrals, gives us another point of view on how they depend on $\alpha$. Below, for a power series $S \in K[[z]]$, we denote by $S[z^{m}]$ the coefficient of $z^{m}$ in $S$ for all $m \in \mathbb{N}$. Again we restrict the statement to positive numbers of iteration of the Frobenius for simplicity, but a similar statement holds for negative number of iterations.

\begin{Proposition} \label{version precise du theoreme I 3 b}For any word $w$ on $e_{0\cup \mu_{N}}$ and any $m_{0} \in \mathbb{N}$, there exists a sequence
$\big(c^{(l,\xi,n)}[w](m_{0})\big)_{\substack{l\in \mathbb{N} \\ \xi \in \mu_{N}(K) \\ n\in\mathbb{N}}}$ of elements of $K$ such that, for any $\tilde{\alpha} \in \mathbb{N}$ such that $q^{\tilde{\alpha}}>m_{0}$ and $m \in \mathbb{N}^{\ast}$ satisfying $|m - m_{0}|_{p} \leqslant q^{-\tilde{\alpha}}$, we have
$$ \Li_{q,\tilde{\alpha}}^{\dagger} [w][z^{m}] = \sum_{n = 0}^{\infty} (q^{\tilde{\alpha}})^{n} \bigg( \sum_{l \in \mathbb{N}} \sum_{\xi} c^{(l,\xi,n)}[w](m_{0}) \xi^{-m} (m-m_{0})^{l} \bigg) . $$
\end{Proposition}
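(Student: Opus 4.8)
The plan is to combine the defining equations (\ref{eq:first}) of the overconvergent $p$-adic multiple polylogarithm $\Li_{p,\alpha}^{\dagger}$ with the power-series expansions of $\Li_{q}^{\KZ}$ at $0$ in terms of cyclotomic multiple harmonic sums $\frak{h}_{m}$, and then feed in the two kinds of expansions of $\frak{h}_{q^{\tilde\alpha}m}$ as functions of $\tilde\alpha$ established in \S3 (Lemma \ref{proposition starting}, Proposition \ref{elimination of ui and ri}, Proposition \ref{elimination of vi}, and their reformulation in Proposition \ref{n variable 1}). First I would write, by (\ref{eq:first}), the coefficient $\Li_{q,\tilde\alpha}^{\dagger}[w][z^{m}]$ as a finite $K$-linear combination, indexed by a set of pairs $(w_{1},w_{2})$ depending only on $w$ and coming from the combinatorics of composition of non-commutative power series (exactly as in the proof of Proposition \ref{prop convergence Li}), of terms of the shape $q^{\alpha s}\,\frak{h}_{q^{\tilde\alpha}\cdot(m/q^{\tilde\alpha'})}(w_{1})\,\big(\text{something}\big)^{?}\,\zeta_{q,\alpha}(w_{2})$; more precisely the relevant harmonic-sum factor is $\frak{h}_{q^{\tilde\alpha}m'}(w_{1})$ times a monomial in $z^{q^{\tilde\alpha}m'}=z^{m}\cdot z^{m-q^{\tilde\alpha}m'}$ coming from the ``$z$-variable'' of $\Li_{q}^{\KZ}$. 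The hypothesis $q^{\tilde\alpha}>m_{0}$ and $|m-m_{0}|_{p}\leqslant q^{-\tilde\alpha}$ says exactly that $m$ lies in a fixed residue disk modulo $q^{\tilde\alpha}$, so I can write $m=m_{0}+q^{\tilde\alpha}m'$ with $m'\in\mathbb{N}$, and the $\xi^{-m}$ and $\xi^{-q^{\tilde\alpha}m'}=\xi^{-m'}$ identities (since $\xi^{q}=\xi$) let me isolate the $\xi^{-m}$-dependence cleanly.

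Next I would apply Proposition \ref{n variable 1} (i) to each $\frak{h}_{m}(w_{1})$ with $m=m_{0}+q^{\tilde\alpha}m'$: the $q$-adic expansion of $m$ has its low part equal to that of $m_{0}$ (independent of $\tilde\alpha$) and its high part governed by $q^{\tilde\alpha}$ and $m'$, and the splitting formula expresses $\frak{h}_{m}(w_{1})$ as a convergent sum over auxiliary indices $(l_{j})$ of products of: (a) factors $\frak{h}_{a q^{y}}$ built only from $m_{0}$, (b) negative powers $\nu_{j}^{-n_{j}}$ again built only from $m_{0}$, (c) binomial coefficients $\binom{-n_{u}}{l_{u}}$, and (d) a factor $\big(q^{\tilde\alpha}m'\big)^{\sum l_{u}}$ carrying all the $\tilde\alpha$-dependence polynomially in $q^{\tilde\alpha}$ (after expanding $(m-m_{0})^{l}$ by the binomial theorem, the part of $m$ on the ``$q^{\tilde\alpha}$ side'' contributes exactly powers of $q^{\tilde\alpha}$ and powers of $(m-m_{0})$). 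The upshot is that, after collecting, each contribution to $\Li_{q,\tilde\alpha}^{\dagger}[w][z^{m}]$ becomes $\sum_{n}(q^{\tilde\alpha})^{n}\sum_{l}\sum_{\xi}c^{(l,\xi,n)}[w](m_{0})\,\xi^{-m}(m-m_{0})^{l}$, with the coefficients $c^{(l,\xi,n)}[w](m_{0})$ assembled from $p$-adic cyclotomic multiple zeta values $\zeta_{q,\alpha}(w_{2})$ (which equal $\zeta_{q,\infty}(w_{2})$ up to $\sigma^{\alpha}$ by Corollary \ref{comparison of vector spaces}, hence have a common bound), from the $\frak{h}_{a q^{y}}(m_{0})$ and $\nu_j^{-n_j}$ depending only on $m_{0}$, from Bernoulli-type numbers $\mathcal{B}_{m}^{l}(\xi)$, and from integer binomial coefficients — in particular they are independent of $\tilde\alpha$, which is the whole point.

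The main obstacle, and the step I would spend the most care on, is the \emph{convergence} of the triple sum over $(l,\xi,n)$ in $K$ (and of the intermediate infinite sums over the auxiliary $(l_{j})$'s and over $m'$). I would handle it exactly as in the proof of equation (\ref{eq:third of I-3}) and of Proposition \ref{prop convergence Li}: use $v_{p}(1/n)\geqslant -\log(n)/\log(p)$ and von Staudt--Clausen ($v_{p}(B_{n})\geqslant -1$) to bound $v_{p}(\mathcal{B}_{m}^{l})\geqslant -1-\log(l+1)/\log(p)$ and $v_{p}(\mathcal{B}_{m}^{l}(\xi))\geqslant 0$ for $\xi\neq 1$; use that $|\Phi_{q,\tilde\alpha}|$ is bounded uniformly in $\tilde\alpha$ (Corollary \ref{comparison of vector spaces}(ii) together with the valuation bounds of \cite{I-1}, \S4) so that the $\zeta_{q,\alpha}(w_{2})$-contributions are uniformly bounded; and observe that the remaining power $(q^{\tilde\alpha})^{n}$ supplies a factor whose valuation grows linearly in $n$, dominating the at-most-logarithmic loss coming from the harmonic-sum and Bernoulli denominators, so each fixed-degree slice in $q^{\tilde\alpha}$ is a convergent series in $K$ and the outer series in $n$ converges as well. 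A secondary bookkeeping point is to check that for fixed $w$ and $m_{0}$ only finitely many ``combinatorial types'' $(w_{1},w_{2})$ and finitely many low-$q$-adic-digit patterns occur, so that the reorganization into the single clean formula is legitimate; this is routine given the finiteness already used in Proposition \ref{prop convergence Li}. Once convergence is secured, reading off the coefficients $c^{(l,\xi,n)}[w](m_{0})$ is just matching powers of $q^{\tilde\alpha}$, of $(m-m_{0})$, and the character $\xi^{-m}$, which completes the proof.
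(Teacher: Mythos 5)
Your plan takes a genuinely different route from the paper, and it has a significant gap.

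The paper's actual proof is short and leans entirely on the regularization framework of \cite{I-1}: (1) by \cite{I-1}, \S3, each regularized $p$-adic iterated integral is a rigid analytic function on $U^{\an}$ whose $z$-coefficients $c_{m}$ \emph{already} satisfy the locally-analytic-times-character property $c_{m} = \sum_{l,\xi} c^{(l,\xi)}(m_{0})\,\xi^{-m}(m-m_{0})^{l}$ for each residue disk $|m-m_{0}|_{p}\leqslant p^{-\alpha}$; (2) by \cite{I-1}, Appendix B, those $c^{(l,\xi)}(m_{0})$ are themselves sums of series in prime weighted multiple harmonic sums; (3) by \cite{I-1}, \S4, $\Li_{p,\alpha}^{\dagger}[w]$ is a finite linear combination of such regularized iterated integrals with coefficients in the ring generated by the $\zeta_{p,\alpha}(w')$; and (4) one then plugs in the \S1--\S2 expansions of $\zeta_{q,\tilde\alpha}$ and $\har_{q^{\tilde\alpha}}$ in powers of $q^{\tilde\alpha}$ to obtain the displayed form. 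The local analyticity in $m$ is thus an \emph{input} inherited from \cite{I-1}, \S3, not something proved here.

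Your plan, by contrast, bypasses the regularization framework and works straight from equation (\ref{eq:first}), the Taylor expansion of $\Li_{q}^{\KZ}$ into cyclotomic multiple harmonic sums, and the $q$-adic-digit decomposition of Proposition \ref{n variable 1}. That is a different decomposition, and it would in effect re-prove the locally-analytic structure of \cite{I-1}, \S3 from first principles. The gap is that you treat the decisive step as a ``secondary bookkeeping point.'' After the $z$-convolution coming from multiplying $\Li_{q}^{\KZ}(z)(p^{\alpha}e_{\bullet})$ against ${\Li_{q}^{\KZ}}^{(p^{\alpha})}(z^{p^{\alpha}})(\ldots)^{-1}$, the coefficient of $z^{m}$ involves $\frak{h}_{m_{1}}(w_{1})$ with $m_{1}=m - q^{\tilde\alpha}m_{2}'$ ranging over the whole residue class of $m_{0}$ up to $m$; the $q$-adic expansion of $m_{1}$ therefore has a high part that depends nontrivially on $m_{2}'$ and on $\tilde\alpha$, so applying Proposition \ref{n variable 1} term by term does not by itself produce coefficients independent of $\tilde\alpha$, and the factors $\nu_{j}^{-n_{j}}$ coming from the high digits introduce negative powers of $q^{\tilde\alpha}$ that must be cancelled against the weight factors. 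Establishing that this re-sum collapses to a power series in $q^{\tilde\alpha}$ with coefficients $c^{(l,\xi,n)}[w](m_{0})$ depending only on $(w,m_{0})$ is precisely the content of the proposition; the convergence estimates you concentrate on are the easier half. So the route could conceivably be pushed through, and it would be more self-contained than the paper's, but as written it does not actually carry the central point — the paper instead imports that point from the regularization results of \cite{I-1}, \S3, \S4, and Appendix B.
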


\begin{proof} In \cite{I-1}, \S3 we have defined a notion of regularized iterated integrals attached to any sequence of differential forms among 
$\displaystyle p^{\alpha}\frac{dz}{z}$, $\displaystyle \frac{p^{\alpha}dz}{z-\xi}$, $\xi \in \mu_{N}(K)$, $\displaystyle \frac{d(z^{p^{\alpha}})}{z^{p^{\alpha}}-\xi^{p^{\alpha}}}$, $\xi \in \mu_{N}(K)$. 
We have computed these regularized iterated integrals by induction on the depth, and this gives us information on how they depend on $\alpha$.
Namely, each regularized iterated integral is a rigid analytic function on $\mathbb{P}^{1,\an} - \cup_{\xi\in \mu_{N}(K)} B(\xi,1)$ which has a power series expansion $\sum\limits_{m=0}^{\infty}c_{m}z^{m}$ satisfying the following property: for any $m_{0} \in \{0,\ldots,p^{\alpha}-1\}$, there exists a sequence $(c^{(l,\xi)}(m_{0}))_{\substack{l \in \mathbb{N} \\ \xi \in \mu_{N}(K)}}$ of elements of $K$ such that, for all $m \in \mathbb{N}$ with $|m-m_{0}|_{p}\leqslant p^{-\alpha}$, we have $c_{m}= \sum\limits_{l=0}^{\infty} \sum_{\xi} c^{(l,\xi)}(m_{0})\xi^{-m}(m-m_{0})^{l}$.
\newline\indent In \cite{I-1}, Appendix B, we have showed that the numbers $c^{(l,\xi)}(m_{0})$ have an expression as certain sums of series involving multiple harmonic sums, and particularly prime weighted multiple harmonic sums.
\newline\indent In \cite{I-1} \S4, we have showed an expression of each $\Li_{p,\alpha}^{\dagger}[w]$ as a linear combination of regularized $p$-adic iterated integrals over the ring of $p$-adic cyclotomic multiple zeta values $\zeta_{p,\alpha}(w')$.
\newline\indent Combining these facts with the results of \S1 and \S2 on how $\zeta_{q,\tilde{\alpha}}$ and $\har_{q^{\tilde{\alpha}}}$ depend on $\tilde{\alpha}$, we deduce the result.
\end{proof}

\appendix

\section{A Poisson bracket corresponding to the pro-unipotent harmonic action of integrals at (1,0)}

We have seen that, by their definitions, the pro-unipotent harmonic actions (\S1.4 and definition \ref{def de Rham Ihara}) are connected to the Ihara product (equation (\ref{eq:Ihara action})). However, often in the literature, what is used is not the Ihara product but the corresponding Lie bracket, called the Ihara bracket, which is a Poisson bracket. In this section we explain that the pro-unipotent harmonic action of integrals at (1,0) (Definition \ref{def de Rham Ihara}) corresponds naturally to a Poisson Lie bracket.

\subsection{The Ihara bracket and the adjoint analogue}

Let $V^{\omega}$ be the group of automorphisms defined in \cite{Deligne Goncharov}, \S5.10. The Ihara bracket is the Lie bracket of $\Lie(V^{\omega})$, regarded via the isomorphism $\Lie(V^{\omega}) \simeq \Lie(\Pi_{1,0})$, $v \mapsto v( {}_1 \Pi_{0})$; namely, it is given by the following formula (\cite{Deligne Goncharov}, \S5.12-\S5.13) 
$$ \{f,g\} = [f,g] + D_{f}(g) + D_{g}(f) , $$
where $D_{f}$ is the derivation which sends $e_{0} \mapsto 0$ and $e_{\xi} \mapsto [f^{(\xi)},e_{\xi}]$ for any $\xi \in \mu_{N}(K)$. The Ihara bracket is a Poisson bracket; namely it satisfies the equality $\{fg,h\} = \{f,h\}g+f\{g,h\}$.

Let $\tilde{V}^{\omega}$ be the preimage of $\tilde{\Pi}_{1,0}$ by the isomorphism $V^{\omega} \simlra \Pi_{1,0}$, $v \mapsto v( {}_1 \Pi_{0})$.

We have defined in \cite{I-2}, definition 1.1.3, the adjoint Ihara product (equation (\ref{eq:adjoint Ihara action})) and we have proved in \cite{I-2}, proposition 1.1.4 that $\Ad(e_{1})$ is an isomorphism of groups from $(\tilde{\Pi}_{1,0}(K),\circ^{\smallint_{1,0}}) \simlra (\Ad_{\tilde{\Pi}_{1,0}(K)}(e_{1}),
\circ_{\Ad}^{\smallint_{1,0}})$.
\newline\indent We have viewed $\Lie(\tilde{V}^{\omega})$ as a subset of $K \langle \langle e_{0\cup \mu_{N}}\rangle\rangle$ and we can again view $\Lie(\Ad_{\tilde{V}^{\omega}}(e_{1}))$ as a subset of $K \langle \langle e_{0\cup \mu_{N}}\rangle\rangle$. The derivative of $f \mapsto f^{-1}e_{1}f$, the map $f \mapsto [e_{1},f]$ is the isomorphism of Lie algebras defined by $\Lie(\tilde{V}^{\omega}) \simlra \Lie(\Ad_{\tilde{V}^{\omega}}(e_{1}))$.

\begin{Proposition} \label{prop bracket adjoint} The Lie bracket of  $\Lie(\Ad_{\tilde{V}^{\omega}}(e_{1}))$
is $\{f,g\}_{\Ad} = d_{f}(g) - d_{g}(f)$ where $d_{f}$ is the derivation sending $e_{0} \mapsto 0$, $e_{\xi} \mapsto f^{(\xi)}$ for all $\xi \in \mu_{N}(K)$.
\newline\indent Let the product $.^{\smallint_{1,0}}_{\Ad}$ on $\Ad_{\tilde{\Pi}_{1,0}(K)}(e_{1})$ be defined by $(g^{-1}e_{1}g).^{\smallint_{1,0}}_{\Ad} (f^{-1}e_{1}f) = (gf)^{-1}e_{1}(gf)$, and its Lie version be $[g,e_{1}] .^{\smallint_{1,0}}_{\Ad} [f,e_{1}] = [gf,e_{1}]$.
\newline\indent 
Then, $\{.,.\}_{\Ad}$ is a Poisson bracket, namely, it satisfies 
$\{a ._{\Ad}^{\smallint_{1,0}} b,c\} = \{a,c\}.^{\smallint_{1,0}}_{\Ad}b+a.^{\smallint_{1,0}}_{\Ad}\{b,c\}$.
\end{Proposition}

\begin{proof} Let us proceed as in the proof of proposition 5.13 in \cite{Deligne Goncharov}. \newline We write $g=1+a\epsilon$.  When $\epsilon \rightarrow 0$, we have 
$(1+a\epsilon) \circ_{\Ad}^{\smallint_{1,0}} f = f + \epsilon d_{g}(f) + O(\epsilon^{2})$.					
\newline Thus, the action of $\Lie(V^{\omega})$ on $\Ad(\tilde{\Pi}_{1,0})(e_{1})$ by $\circ_{\Ad}^{\smallint_{1,0}}$ is by $g \mapsto d_{g}$. This map is injective.
\newline By the injectivity of $d$, we only have to show that $[d_{f},d_{g}] = d_{d_{f}(g) - d_{g}(f)}$. Since they are derivations, it is sufficient to prove that these two maps agree on $e_{1}$, and this follows directly from their definitions. 
\newline The fact that it is a Poisson bracket follows from the isomorphism of Lie algebras $\Lie(\tilde{V}^{\omega}) \simlra \Lie(\Ad_{\tilde{V}^{\omega}}(e_{1}))$.
\end{proof}

\begin{Definition} We call $\{.,.\}_{\Ad}$ the adjoint Ihara bracket.
\end{Definition}

\subsection{Harmonic analogue of the Ihara bracket}

We also have defined in definition \ref{def de Rham Ihara} the pro-unipotent harmonic action of integrals at (1,0), $\circ_{\har}^{\smallint_{1,0}}$, by pushing forward the adjoint Ihara action by the map $\comp^{\Sigma\smallint}$ (see \S4.2), which amounts to the map $S$ of definition 2.1.2. We are now going to push-forward $\{.,.\}_{\Ad}$ by a linear and injective version of the map $S$ of definition 2.1.2. 
Below, $K [[\Lambda]]\langle\langle e_{0\cup\mu_{N}} \rangle\rangle_{\har}^{\smallint_{1,0}}$ is defined like $K \langle\langle e_{0\cup\mu_{N}} \rangle\rangle_{\har}^{\smallint_{1,0}}$ with coefficients in $K [[\Lambda]]$ instead of $K$.

\begin{Definition} (i) Let $S_{\lambda}: K\langle\langle e_{0\cup \mu_{N}} \rangle\rangle_{\widetilde{o(1)}} \rightarrow K [[\Lambda]]\langle\langle e_{0\cup\mu_{N}} \rangle\rangle_{\har}^{\smallint_{1,0}}$ defined by 
\newline $h \mapsto \sum\limits_{\substack{d \in \mathbb{N}^{\ast} \\ \xi_{1},\ldots,\xi_{d+1} \in \mu_{N}(K) \\ n_{1},\ldots,n_{d} \in \mathbb{N}^{\ast}}} h \big[ \frac{1}{1- \Lambda e_{0}}e_{\xi_{d+1}}e_{0}^{n_{d}-1}e_{\xi_{d}} \ldots e_{0}^{n_{1}-1}e_{\xi_{1}}\big]\text{ }e_{\xi_{d+1}}e_{0}^{n_{d}-1}e_{\xi_{d}} \ldots e_{0}^{n_{1}-1}e_{\xi_{1}}$.
\newline (ii) \label{la def 1} Let $L_{\har}(K) = S_{\lambda} \Lie \Ad_{\tilde{V}^{\omega}(K)}(e_{1})$.
\end{Definition}

\begin{Proposition} $L_{\har}(K)$ has a canonical Lie bracket $\{.,.\}_{\har}$ defined by 
$$ \{ S_{\Lambda}f,S_{\Lambda}g\}_{\text{har}} = S_{\Lambda} (\{f,g\}_{\Ad}) . $$
It is a Poisson bracket, i.e. we have $\{a ._{\har}^{\smallint_{1,0}} b,c\} = \{a,c\}.^{\smallint_{1,0}}_{\har}b+a .^{\smallint_{1,0}}_{\har}\{b,c\}$, 
where the product $.^{\smallint_{1,0}}_{\har}$ is defined by $S_{\Lambda}g ._{\har}^{\smallint_{1,0}} S_{\Lambda} f = S_{\Lambda} (g ._{\Ad}^{\smallint_{1,0}} f)$.
\end{Proposition}

\begin{proof} Similar to the proof of proposition A.1.1.
\end{proof}

\begin{Definition} We call $\{.,.\}_{\har}$ the harmonic Ihara bracket.
\end{Definition}

\begin{Remark} (i) The harmonic Ihara bracket $\{.,.\}_{\har}$ corresponds to the group law $\tilde{\circ}_{\har}^{\smallint_{1,0}}$ defined by $ S_{\Lambda}g \tilde{\circ}_{\har}^{\smallint_{1,0}} S_{\Lambda}f = S_{\Lambda}(g \circ_{\Ad}^{\smallint_{1,0}} f)$. Because of the injectivity of $S_{\Lambda}g$, the group law $\tilde{\circ}_{\har}^{\smallint_{1,0}}$ can be thought of as another version of the pro-unipotent harmonic action of integrals $\circ_{\har}^{\smallint_{1,0}}$ of definition \ref{def de Rham Ihara}.
	
(ii) Another way to define a harmonic variant of the Ihara bracket would be to restrict to summable points $\Ad_{\tilde{\Pi}_{1,0}}(K)_{o(1)}(e_{1}) \subset \Ad_{\tilde{\Pi}_{1,0}}(K)_{o(1)}(e_{1})$ and to use  $\comp_{\iter}^{\Sigma\smallint}$, which is injective by proposition 4.2.3, instead of $\comp_{\Lambda}^{\Sigma\smallint}$.
\end{Remark}

\end{document}